\newcommand{\argmax}{\operatornamewithlimits{arg\,max}}
\newcommand{\argmin}{\operatornamewithlimits{arg\,min}}
\newtheorem{theorem}{Theorem}
\newtheorem{proposition}[theorem]{Proposition}
\newtheorem{lemma}[theorem]{Lemma}
\theoremstyle{definition}
\newtheorem{define}[theorem]{Definition}
\newtheorem{remark}[theorem]{Remark}
\begin{document}

\title{Scheduling Algorithms for Minimizing Age of Information in Wireless Broadcast Networks with Random Arrivals: The No-Buffer Case}

\author{Yu-Pin Hsu, Eytan Modiano, and Lingjie Duan
	\IEEEcompsocitemizethanks{\IEEEcompsocthanksitem Y.-P. Hsu is with  Department	of Communication Engineering, National Taipei University, Taiwan.\protect\\
		E-mail: yupinhsu@mail.ntpu.edu.tw
		\IEEEcompsocthanksitem E. Modiano is with Laboratory for Information and Decision Systems, Massachusetts Institute of Technology, USA.\protect\\
		E-mail: modiano@mit.edu
		\IEEEcompsocthanksitem L. Duan is with Engineering Systems and Design Pillar, Singapore University of Technology and Design, Singapore.\protect\\
		E-mail: lingjie\_duan@sutd.edu.sg
}
	\thanks{This paper was presented in part in the Proc. of IEEE ISIT, 2017 \cite{hsuage} and 2018 \cite{hsu2018age}.}}

\IEEEtitleabstractindextext{%
	\begin{abstract}
	\textit{Age of information} is a new network performance metric that captures \textit{the freshness of information at end-users}.  This paper studies the age of information from a scheduling perspective. To that end, we consider a wireless broadcast network where a base-station (BS) is updating many users on \textit{random}  information arrivals under a transmission capacity constraint. For the offline case when the arrival statistics are known to the BS, we develop a  \textit{structural MDP scheduling algorithm} and an \textit{index scheduling algorithm},  leveraging Markov decision process (MDP) techniques and the Whittle's methodology for restless bandits. By exploring optimal structural results, we not only reduce the computational complexity of the MDP-based algorithm, but also simplify deriving a closed form of the Whittle index.  Moreover, for the online case, we develop an \textit{MDP-based online scheduling algorithm} and an \textit{index-based online scheduling algorithm}. Both the structural MDP scheduling algorithm and the MDP-based online scheduling algorithm asymptotically minimize the average age, while the index scheduling algorithm  minimizes the average age when the information arrival rates for all users  are the same. Finally, the algorithms are validated via extensive numerical studies.  
	\end{abstract}
	
	\begin{IEEEkeywords}
		Age of information, scheduling algorithms, Markov decision processes.
\end{IEEEkeywords}}

\maketitle

\IEEEraisesectionheading{\section{Introduction}}
In recent years there has been a growing research interest in an \textit{age of information} \cite{age:kaul}. The age of information is motivated by a variety of network applications requiring  \textit{timely} information. Examples range from information updates for \textit{network users}, e.g.,  live traffic, transportation, air quality, and weather, to status updates for \textit{smart systems}, e.g., smart home systems, smart transportation systems, and smart grid systems.


Fig. \ref{fig:motivation} shows an example network, where  network users $u_1, \cdots, u_N$ are running  applications that need some timely information (e.g., user $u_1$ needs both traffic and transportation information for planning the best route), while at some epochs, snapshots of the information are generated at the sources and sent to the users in the form of packets over wired or wireless networks. The users are being updated and keep the latest information only.   Since the information at the end-users is expected to be as timely as possible, the age of information is therefore proposed to capture the \textit{freshness of the information at the end-users}; more precisely, it measures the elapsed time since the generation of the information.   In addition to the timely information for the network users, the smart systems also need timely status (e.g., locations and velocities in smart transportation systems) to accomplish some tasks (e.g., collision-free smart transportation systems). As such, the age of information is a good metric to evaluate these networks supporting age-sensitive applications. 

\begin{figure}[!t]
\centering
\includegraphics[width=.4\textwidth]{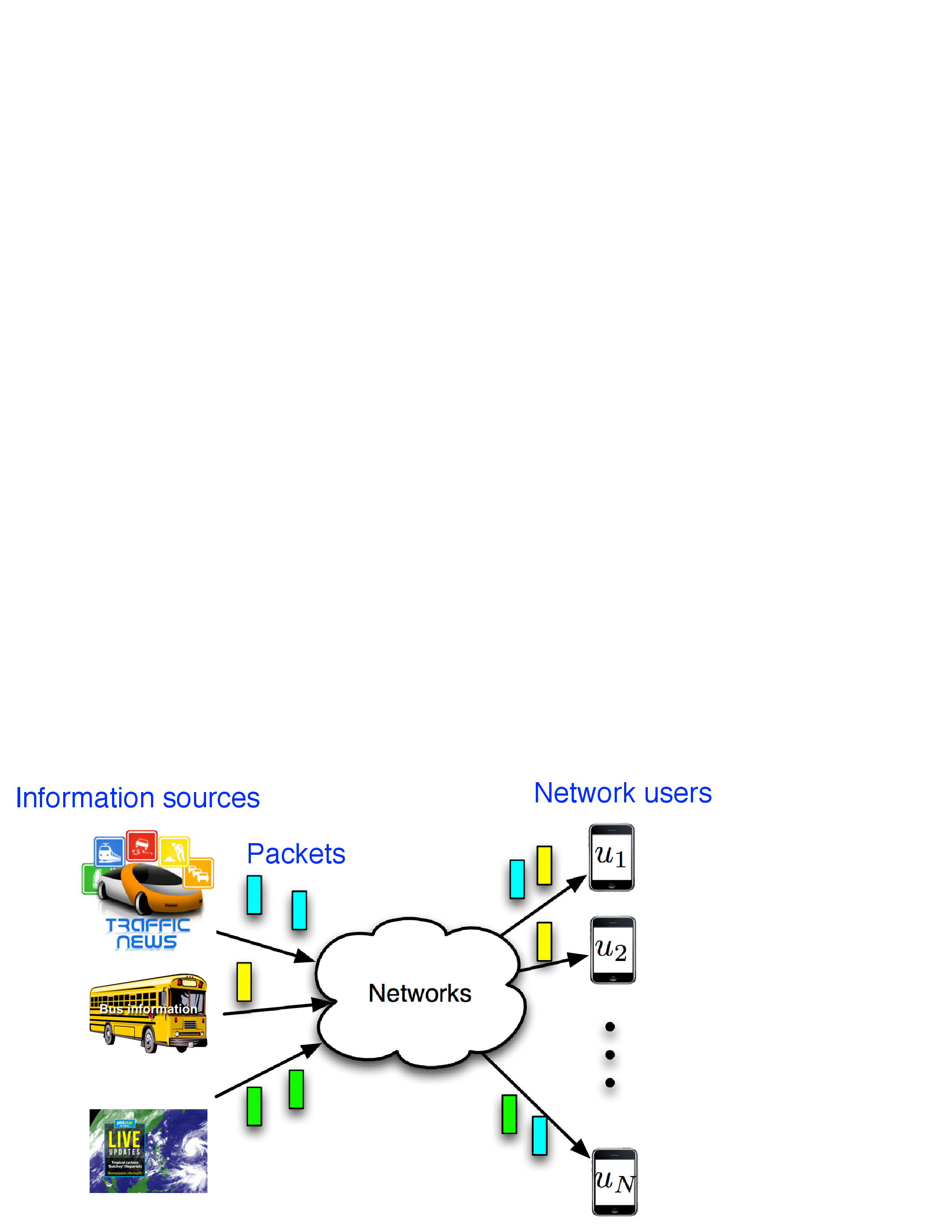}
\caption{Timely information updates for network users.}
\label{fig:motivation}
\end{figure}

%

Next, we characterize the age-sensitive networks in two aspects. First,  while   packet delay is usually referred to as the elapsed time  from the generation to its delivery, the age of information includes not only the packet delay but also the inter-delivery time, because the age of information keeps increasing until the information at the end-users is updated. We hence need to jointly consider the two parameters so as to design an age-optimal network.  Moreover, while  traditional relays (i.e., intermediate nodes) need buffers to keep all packets that are not served yet,  the relays in the network of Fig. \ref{fig:motivation} for timely information at most store the latest information and discard  out-of-date packets. The buffers for minimizing the age here are no longer as useful as those in  traditional relay networks.

 In this paper, we consider a wireless broadcast network, where a base-station (BS) is updating many network users on timely information.  The new information is \textit{randomly} generated at its source. We assume that the BS can serve at most one user for each transmission opportunity. Under the transmission constraint, a transmission scheduling algorithm manages how the channel resources are allocated for each time, depending on the packet arrivals at the BS and the ages of the information at the end-users. The scheduling design is a critical issue to optimize network performance. In this paper we hence develop scheduling algorithms for minimizing the long-run average age.



\subsection{Contributions}
We study the age-optimal scheduling problem in the wireless broadcast network without the buffers at the BS. Our main contributions lie at designing novel  scheduling algorithms and analyzing their age-optimality. For the case when the arrival statistics are available at the BS as prior information, we develop two \textit{offline} scheduling algorithms, leveraging a Markov decision process (MDP) and the Whittle index. However,  the MDP and the Whittle index in our problem will be difficult to analyze as they involve \textit{long-run average cost optimization problems} with \textit{infinite state spaces} and \textit{unbounded immediate costs} \cite{MDP:Bertsekas}. Moreover, it is  in general very challenging to obtain the Whittle index in closed form. By investigating some \textit{structural results},  we not only successfully resolve the issues but also simplify the calculation of the Whittle index. It turns out that our index scheduling algorithm is very simple. When the arrival statistics are unknown, we develop  \textit{online} versions of the two offline algorithms. We show that both offline and online MDP-based scheduling algorithms are asymptotically age-optimal, and the index scheduling algorithm is age-optimal when the information arrival rates for all users are the same. Finally,  we  compare these  algorithms via extensive computer simulations,  and further investigate the impact of the buffers storing the latest information.

\subsection{Related works}
The general idea of  \textit{age} was proposed in \cite{data-age:cho} to study how to refresh a local copy of an autonomous information source to maintain the local copy up-to-date.  The age defined in \cite{data-age:cho} is associated with \textit{discrete} events at the information source, where the age is zero until the  source is updated. Differently, the age of information in \cite{age:kaul} measures the age of a sample of \textit{continuous} events; therefore, the sample immediately becomes old after generated. Many previous works, e.g.,  \cite{age:kaul,age:kam2,age:sun,age:yates2,age:bacinoglu,age:Costa},  studied the age of information  for a single link. The  papers \cite{age:kaul,age:kam2} considered buffers to store all unserved  packets (i.e., out-of-date packets are also stored) and analyzed the long-run average age, based on various  queueing models. They showed that  neither the throughput-optimal sampling rate nor the delay-optimal sampling rate can minimize the average age.  The paper \cite{age:sun} considered a \textit{smart} update  and showed that the \textit{always update} scheme might not minimize the average age.    Moreover, \cite{age:yates2,age:bacinoglu} developed power-efficient updating algorithms for minimizing the average age. The model in \cite{age:Costa} considered no buffer or a buffer to store the latest information.

Of the most relevant works  on scheduling multiple users are \cite{age:he,joowireless,sun2018age,age:igor,yatesage}. The works \cite{age:he,joowireless,sun2018age} considered buffers at a BS to store all out-of-date packets.
The paper \cite{age:igor} considered a buffer to store the latest information with \textit{periodic} arrivals, while information updates in \cite{yatesage} can be generated \textit{at will}. In contrast, our work  is the first to develop both offline and online scheduling algorithms for \textit{random} information arrivals, with the purpose of minimizing the long-run average age. 

\section{System overview} \label{section:system}

\begin{figure}[!t]
\centering
\includegraphics[width=.3\textwidth]{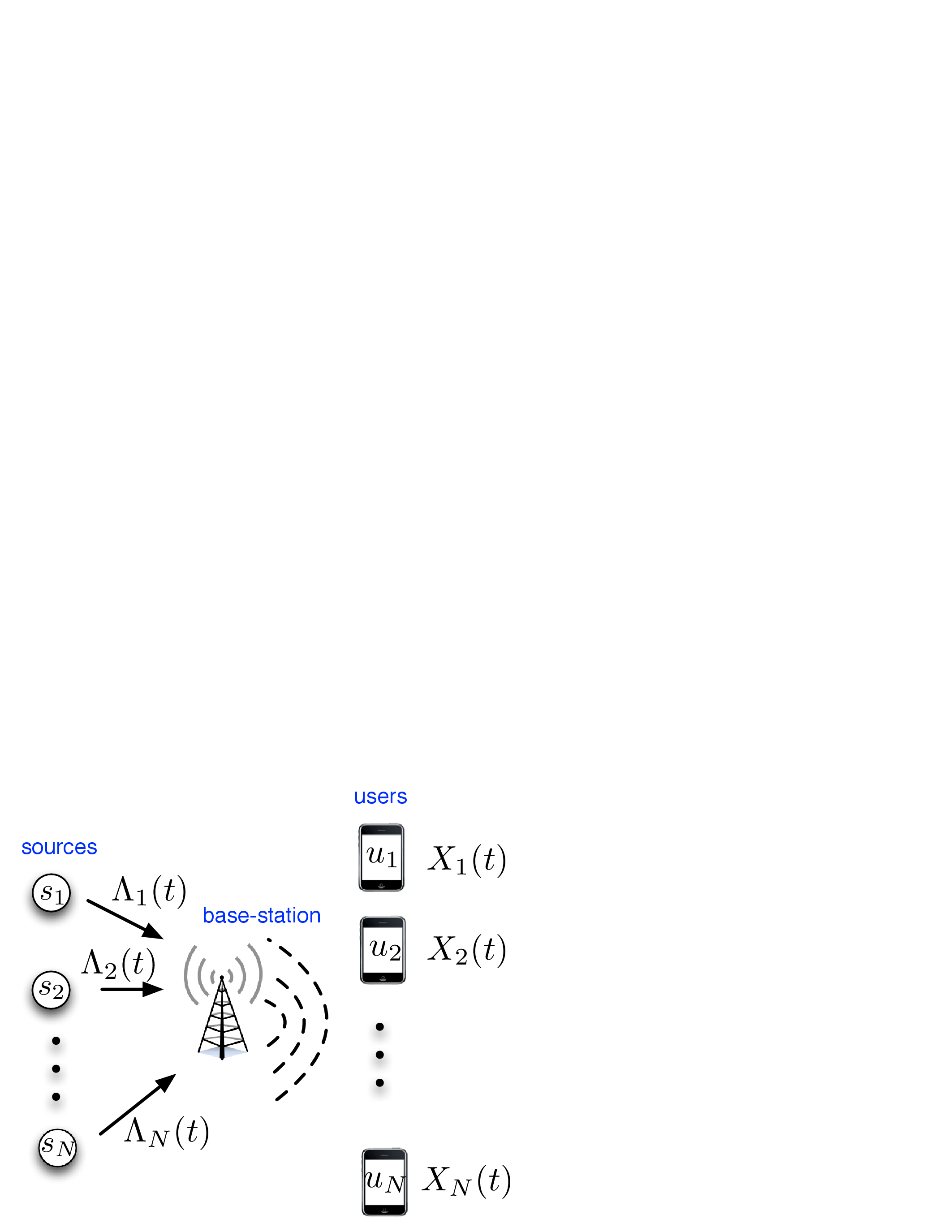}
\caption{A BS  updates $N$ users $u_1, \cdots, u_N$ on  information of sources $s_1, \cdots, s_N$, respectively.}
\label{fig:model}
\end{figure}
\subsection{Network model}
We consider a wireless broadcast network  in Fig.~\ref{fig:model} consisting of a base-station (BS) and $N$ wireless users $u_1, \cdots, u_N$. Each user $u_i$ is interested in timely information generated by a source $s_i$, while the information is transmitted through the BS in the form of packets. We consider a discrete-time system with  slot \mbox{$t=0, 1,  \cdots$}. Packets from the sources (if any) arrive at the BS at the \textit{beginning} of each  slot. The packet arrivals at the BS for different users are independent of each other and also independent and identically distributed (i.i.d.) over slots, following a Bernoulli distribution.  Precisely, by $\Lambda_i(t)$, we indicate if a packet from source $s_i$ arrives at the BS in slot $t$, where $\Lambda_i(t)=1$ if there is a packet; otherwise, $\Lambda_i(t)=0$.  We denote the probability  $P[\Lambda_i(t)=1]$ by $p_i$. 

Suppose  that  the BS can successfully transmit at most one packet during each  slot, i.e., the BS can update at most one user in each slot. By $D(t) \in \{0, 1, \cdots N\}$ we denote a decision of the BS in slot $t$, where $D(t)=0$ if the BS does not transmit any packet and $D(t)=i$ for $i=1, \cdots, N$ if  user $u_i$ is scheduled to be updated in slot $t$.   

In this paper we fosus on the scenario without depolying any buffer at the BS, where an arriving packet is discarded if it is not transmitted in the arriving slot. The \textit{no-buffer network}  is not only simple to implement for practical systems, but also was shown to achieve good performance in a single link (see \cite{age:Costa}). In Section \ref{section:simulation}, we will numerically study  networks with buffers in general.

\subsection{Age of information model}
We initialize the ages of all arriving packets \textit{at the BS} to be zero. The age of information \textit{at a user}  becomes one on receiving a new packet, due to one slot  of the transmission time. Let $X_i(t)$ be the age of information at user $u_i$ in slot $t$ \textit{before} the BS makes a  scheduling decision. Suppose that the age of information  at a user increases linearly with slots if the user is not updated. Then, the dynamics of the age of information for user $u_i$ is
\begin{align}
X_i(t+1)=\left\{
\begin{array}{ll}
1 & \text{if $\Lambda_i(t)=1$ and $D(t)=i$;} \\
X_i(t)+1 & \text{else,}
\end{array}
\right.
\label{eq:age-dynamic}
\end{align}
where the age of information in the next slot is one if the user gets updated on the new information; otherwise, the  age increases by one. Let $\mathbf{X}(t)=(X_1(t), \cdots, X_N(t))$ be the age vector in slot $t$.

Let  $\mathbf{X}$ be the set consisting of all age vectors $(x_1, \cdots, x_N)$ where the ages satisfy $x_i \geq 1$ for all $i$ and $x_i\neq x_j$ for all $i \neq j$. Since the BS can update at most one user in each slot, if an initial age vector $\mathbf{X}(0)$ is outside   $\mathbf{X}$, then eventually  age vector $\mathbf{X}(t)$ will enter  $\mathbf{X}$ and  stay in  $\mathbf{X}$ onwards; otherwise, someone is never updated and its age approaches infinity. In other words, the age vector outside $\mathbf{X}$ is \textit{transient}. Without loss of generality, we  assume that initial age vector $\mathbf{X}(0)$ is within  $\mathbf{X}$. Later, in the proof of Lemma~\ref{thm:stationary}, we will show that any transmission decision before the age vector enters  $\mathbf{X}$ will not affect the minimum \textit{average age} (defined in the next section). 



\subsection{Problem formulation}
A \textit{scheduling algorithm} $\pi=\{D(0), D(1), \cdots\}$ specifies a transmission decision for each slot.   We  define the \textit{average age} under   scheduling algorithm $\pi$  by
\begin{align*}
\limsup_{T \rightarrow \infty} \frac{1}{T+1} E_{\pi} \left[ \sum_{t=0}^T \sum_{i=1}^N X_i(t)\right], 
\end{align*}
where $E_{\pi}$ represents the conditional expectation, given that scheduling algorithm $\pi$ is employed. We remark that this paper focuses on the total age of users for delivering clean results; whereas our design and analysis can  work perfectly for the \textit{weighted sum} of  the ages. Our goal is  to develop \textit{age-optimal} scheduling algorithms, defined below.  

\begin{define}
A scheduling algorithm $\pi$ is \textit{age-optimal} if it minimizes the average age.
\end{define}

In this paper, we will develop two offline scheduling algorithms and two online scheduling algorithms. Leveraging Markov decision process (MDP) techniques and Whittle's methodology, we  develop two offline scheduling algorithms in Sections~\ref{section:mdp} and \ref{section:whittle}, respectively, when the arrival statistics are available to the BS; later, two  online versions of the  offline  algorithms are proposed in Section~\ref{section:online} for the case when the arrival statistics are oblivious to the BS. 


\section{A structural MDP scheduling algorithm} \label{section:mdp}
Our first scheduling algorithm is driven by the MDP techniques. To that end, we formulate our problem as an MDP $\Delta$ with the  components \cite{MDP:Puterman} below. 
\begin{itemize}
\item \textbf{States}: We define the state $\mathbf{S}(t)$ of the MDP in slot $t$  by $\mathbf{S}(t)=(X_1(t), \cdots, X_N(t),$ $\Lambda_1(t), \cdots, \Lambda_N(t))$. Let $\mathbf{S}$ be the state space consisting of all states $(x_1, \cdots, x_N, \lambda_1, \cdots, \lambda_N)$ where  
\begin{itemize}
	\item $(x_1, \cdots, x_N) \in \mathbf{X}$ or $x_i> N$   for all $i$;
	\item  $\lambda_i \in \{0,1\}$ for all $i$.  
\end{itemize}
The state space includes some transient age vectors. That is used to fit \textit{truncated states} in Section~\ref{subsection:finite-state}. We will show later in Lemma~\ref{thm:stationary} that adding these transient states will not change the minimum average age. Note that $\mathbf{S}$ is a \textit{countable infinite} set because the ages are possibly unbounded.

\item \textbf{Actions}: We define the action of the MDP in slot $t$ to be $D(t)$.
Note that the action space is finite.

\item \textbf{Transition probabilities}: 
By $P_{\mathbf{s},\mathbf{s}'}(d)$ we denote the transition probability of the MDP from state $\mathbf{s}=(x_1, \cdots, x_N, \lambda_1, \cdots, \lambda_N)$ to state $\mathbf{s}'=(x_1', \cdots, x_N',$ $\lambda_1', \cdots, \lambda_N')$ under  action $D(t)=d$. According to the age dynamics in Eq. (\ref{eq:age-dynamic}) and the i.i.d. assumption of the arrivals, we can describe the non-zero $P_{\mathbf{s},\mathbf{s}'}(d)$ as
\begin{align*}
P_{\mathbf{s},\mathbf{s}'}(d)=
\prod_{i:\lambda'_i=1} p_i \prod_{i:\lambda'_i=0} (1-p_i), 
\end{align*}
if $x_i'=(x_i+1)-x_i\mathbbm{1}_{i =d \text{\,\,and\,\,} \lambda_i= 1}$ for all $i=1, \cdots, N$, where $\mathbbm{1}$ is the indicator function. 

\item \textbf{Cost}: Let $C(\textbf{S}(t), D(t)=d)$ be the \textit{immediate cost} of the MDP if action $D(t)=d$ is taken in slot $t$ under state $\mathbf{S}(t)$, representing the resulting total age in the next slot:
\begin{align*}
C(\textbf{S}(t), D(t)=d) \triangleq & \sum^N_{i=1} X_i(t+1)\\
=&\sum_{i=1}^N (X_i(t)+1)-X_d(t)\cdot \Lambda_d(t), 
\end{align*} 
where we define  $X_0(t)=0$ and $\Lambda_0(t)=0$ for all $t$ (for the no update case  $d=0$), while the last term indicates  that user $u_d$  is updated in slot $t$. 

\end{itemize}

%

The objective of the MDP $\Delta$ is to find a \textit{policy} $\pi$  (with the same definition as the scheduling algorithm) that minimizes the \textit{average cost} $V(\pi)$ defined by
\begin{align*}
V(\pi) = \limsup_{T \rightarrow \infty} \frac{1}{T+1} E_{\pi} \left[ \sum_{t=0}^T C(\mathbf{S}(t),D(t))\right].
\end{align*}

\begin{define}
A policy $\pi$ of the MDP $\Delta$ is \textit{$\Delta$-optimal}  if it minimizes the average cost $V(\pi)$. 
\end{define}
Then, a $\Delta$-optimal policy is an age-optimal scheduling algorithm. Moreover, policies of the MDP can be classified as follows.  
A policy of the MDP is \textit{history dependent} if $D(t)$ depends on $D(0), \cdots, D(t-1)$ and $\mathbf{S}(0) \cdots, \mathbf{S}(t)$.  A policy is \textit{stationary} if $D(t_1)=D(t_2)$ when $\mathbf{S}(t_1)=\mathbf{S}(t_2)$ for any $t_1, t_2$.    A \textit{randomized} policy specifies a probability distribution on the set of decisions, while a \textit{deterministic} policy makes a decision with certainty. A policy in general  belongs to one of the following sets \citep{MDP:Puterman}:
\begin{itemize}
	\item  $\Pi^{\text{HR}}$: a set of randomized history dependent policies;
	\item $\Pi^{\text{SR}}$: a set of  randomized stationary policies;
	\item $\Pi^{\text{SD}}$: a set of  deterministic stationary policies.
\end{itemize}
Note  that $\Pi^{\text{SD}} \subseteq \Pi^{\text{SR}} \subseteq \Pi^{\text{HR}}$ \cite{MDP:Puterman}, while the  complexity of searching a $\Delta$-optimal policy increases from left to right.  According to \cite{MDP:Puterman}, there may exist neither $\Pi^{\text{SR}}$ nor $\Pi^{\text{SD}}$ policy that is $\Delta$-optimal. Hence, we target at exploring a regime under which a $\Delta$-optimal policy lies in a smaller policy set $\Pi^{\text{SD}}$, and investigating  its structures.

%
%
%

\subsection{Characterization of the $\Delta$-optimality}  \label{section:characterization}
To characterize the $\Delta$-optimality, we start with introducing an  infinite horizon \textit{$\alpha$-discounted cost}, where $0 < \alpha < 1$ is a discount factor. We subsequently connect the discounted cost case to the average cost case, because structures of a $\Delta$-optimal policy   usually depend on its discounted cost case.

Given  initial state $\mathbf{S}(0)=\mathbf{s}$, the \textit{expected total $\alpha$-discounted cost} under  scheduling algorithm $\pi$ (that can be history dependent) is
\begin{align*}
V_{\alpha}(\mathbf{s}; \pi) =\lim_{T \rightarrow \infty} E_\pi \left[ \sum_{t=0}^T \alpha^t C(\mathbf{S}(t), D(t)) |\mathbf{S}(0)=\mathbf{s}\right]. 
\end{align*}

\begin{define}
A policy $\pi$ of the MDP $\Delta$ is \textit{$\Delta_{\alpha}$-optimal} if it minimizes the expected total $\alpha$-discounted cost $V_{\alpha}(\mathbf{s}; \pi)$. In particular, we define
\begin{align*}
V_{\alpha}(\mathbf{s})=\min_{\pi}V_{\alpha}(\mathbf{s};
\pi).
\end{align*}
\end{define}
Moreover, by $h_{\alpha}(\mathbf{s})=V_{\alpha}(\mathbf{s})-V_{\alpha}(\mathbf{0})$ we define the \textit{relative cost function}, which is the difference of the minimum discounted costs between state $\mathbf{s}$ and a reference state $\mathbf{0}$. We can arbitrarily choose the reference state, e.g., $\mathbf{0}=(1,2,\cdots,N, 1, \cdots, 1)$ in this paper. We then introduce the \textit{discounted cost optimality equation} of $V_{\alpha}(\mathbf{s})$  below.

\begin{proposition} \label{lemma:optimality-eq}
The minimum expected total $\alpha$-discounted cost $V_{\alpha}(\mathbf{s})$, for initial state $\mathbf{s}$, satisfies the following \textit{discounted cost optimality equation}:
\begin{align}
V_\alpha(\mathbf{s}) &= \min_{d \in \{0,1, \cdots, N\}}  C(\mathbf{s},d) + \alpha E[V_\alpha(\mathbf{s}') ], \label{eq:discounte-optimality-equation}
\end{align}
where the expectation is taken over all possible next state $\mathbf{s}'$ reachable from the state $\mathbf{s}$, i.e., $E[V_\alpha(\mathbf{s}')]=\sum_{\mathbf{s}' \in \mathbf{S}} P_{\mathbf{s},\mathbf{s}'}(d)V_{\alpha}(\mathbf{s}')$.
A deterministic stationary policy that realizes the minimum of the right-hand-side (RHS) of  the discounted cost optimality equation in Eq. (\ref{eq:discounte-optimality-equation})  is a $\Delta_{\alpha}$-optimal policy. Moreover, we can define a value iteration $V_{\alpha, n}(\mathbf{s})$ by $V_{\alpha, 0}(\mathbf{s})= 0$ and  for any $n \ge 0$,
\begin{align}
V_{\alpha, n+1}(\mathbf{s}) &= \min_{d \in \{0,1, \cdots, N\}} C(\mathbf{s},d) + \alpha E[V_{\alpha, n}(\mathbf{s}') ]. \label{eq:discount-itr}
\end{align}
Then, 
$V_{\alpha, n}(\mathbf{s}) \rightarrow V_{\alpha}(\mathbf{s})$ as $n \rightarrow \infty$,  for every $\mathbf{s}$ and $\alpha$.
\end{proposition}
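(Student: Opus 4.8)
The plan is to treat this as a discounted Markov decision process with \emph{non-negative} but \emph{unbounded} costs on a countable state space, and to establish all three assertions together through a monotone value-iteration argument rather than a contraction-mapping argument. Because the immediate cost $C(\mathbf{s},d)$ is unbounded (ages may be arbitrarily large), the Bellman operator is not a contraction in the sup-norm, so the textbook Banach fixed-point proof does not apply. The structural fact that rescues everything is that the age grows by at most one per slot: under \emph{any} policy, $X_i(t)\le x_i+t$ for $\mathbf{s}=(x_1,\dots,x_N,\dots)$, whence $C(\mathbf{S}(t),D(t))\le\sum_i(x_i+t+1)$ and therefore
\[
V_\alpha(\mathbf{s};\pi)\le\sum_{t=0}^{\infty}\alpha^t\sum_{i=1}^N(x_i+t+1)<\infty
\]
for every policy $\pi$ and state $\mathbf{s}$. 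Hence $V_\alpha(\mathbf{s})$ is finite and grows at most linearly in the components of $\mathbf{s}$; this bound is the quantitative input used repeatedly below.

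First I would analyze the value iteration in Eq.~\eqref{eq:discount-itr}. Since $C\ge 0$ and $V_{\alpha,0}\equiv 0$, an induction using monotonicity of the Bellman operator shows $V_{\alpha,n}(\mathbf{s})$ is nondecreasing in $n$. Standard finite-horizon dynamic programming (backward induction, legitimate because the action set is finite) identifies $V_{\alpha,n}(\mathbf{s})$ with the optimal $n$-stage discounted cost, which, as all summands are non-negative, is bounded above by $V_\alpha(\mathbf{s})$. Thus $V_{\alpha,n}\uparrow V_\infty$ for some limit $V_\infty\le V_\alpha$. To pass to the limit inside Eq.~\eqref{eq:discount-itr}, I would apply the monotone convergence theorem to the countable sum $\sum_{\mathbf{s}'}P_{\mathbf{s},\mathbf{s}'}(d)V_{\alpha,n}(\mathbf{s}')$ (valid since the summands are non-negative and increasing in $n$), and use finiteness of the action set $\{0,\dots,N\}$ to interchange $\min_d$ with the limit. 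This shows $V_\infty$ satisfies the optimality equation Eq.~\eqref{eq:discounte-optimality-equation}.

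It remains to prove $V_\infty=V_\alpha$ and optimality of the greedy policy, which I would handle jointly. Let $\pi^\ast$ be the deterministic stationary policy selecting a minimizing action in the optimality equation for $V_\infty$. Iterating that equation $n$ times gives the exact identity
\[
V_\infty(\mathbf{s})=E_{\pi^\ast}\!\Big[\sum_{t=0}^{n-1}\alpha^t C(\mathbf{S}(t),D(t))\Big]+\alpha^n E_{\pi^\ast}\big[V_\infty(\mathbf{S}(n))\big].
\]
As $n\to\infty$ the first term increases to $V_\alpha(\mathbf{s};\pi^\ast)$. The crux is the tail term: since $V_\infty\le V_\alpha$ grows at most linearly and $X_i(n)\le x_i+n$ pathwise, we have $E_{\pi^\ast}[V_\infty(\mathbf{S}(n))]=O(n)$, so $\alpha^n E_{\pi^\ast}[V_\infty(\mathbf{S}(n))]\to 0$. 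Hence $V_\infty(\mathbf{s})=V_\alpha(\mathbf{s};\pi^\ast)\ge V_\alpha(\mathbf{s})$, which together with $V_\infty\le V_\alpha$ forces $V_\infty=V_\alpha$; the same chain gives $V_\alpha(\mathbf{s};\pi^\ast)=V_\alpha(\mathbf{s})$, so $\pi^\ast$ is $\Delta_\alpha$-optimal. Because $V_{\alpha,n}\uparrow V_\infty=V_\alpha$, the convergence of value iteration follows at once.

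I expect the vanishing-tail step to be the main obstacle, and it is exactly where the linear-growth structure of the age is indispensable. For a generic unbounded-cost MDP the term $\alpha^n E[V_\infty(\mathbf{S}(n))]$ need not vanish, and one would be forced to impose an extra weighted-norm or Lyapunov condition (as in the treatments of Sennott or Hern\'andez-Lerma). Here the at-most-linear growth of both the age and of $V_\alpha$ makes $\alpha^n\cdot\mathrm{poly}(n)\to 0$ automatic, so no additional hypotheses are needed; the only remaining care is in justifying the monotone-convergence interchange on the countable state space and the $\min$--limit interchange over the finite action set.
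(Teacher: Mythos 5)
Your proof is correct, but it takes a genuinely different route from the paper's. The paper's entire proof is a finiteness check: it invokes Sennott's general theorem for countable-state, finite-action, nonnegative-cost discounted MDPs --- which states that once $V_\alpha(\mathbf{s})<\infty$ for every state and discount factor, the optimality equation \eqref{eq:discounte-optimality-equation}, the optimality of a greedy deterministic stationary policy, and the convergence of the value iteration \eqref{eq:discount-itr} all follow --- and then verifies $V_\alpha(\mathbf{s})<\infty$ by computing in closed form the discounted cost of the idle policy $D(t)\equiv 0$, namely $\sum_i x_i/(1-\alpha)+\alpha N/(1-\alpha)^2$. Your quantitative input is the same: the bound $V_\alpha(\mathbf{s};\pi)\le\sum_{t}\alpha^t\sum_i(x_i+t+1)$ holds because ages grow by at most one per slot under \emph{any} policy, which is exactly the trajectory of that idle policy. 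But instead of citing the theorem, you re-derive it for this special case: monotone value iteration $V_{\alpha,n}\uparrow V_\infty\le V_\alpha$, monotone convergence plus finiteness of the action set to pass to the limit inside the Bellman recursion, and then the rollout of the greedy policy $\pi^\ast$ with the vanishing tail $\alpha^n E_{\pi^\ast}\bigl[V_\infty(\mathbf{S}(n))\bigr]\to 0$ --- justified by the at-most-linear growth of $V_\alpha$ --- to close the loop $V_\infty=V_\alpha$ and obtain optimality of $\pi^\ast$. What the paper's approach buys is brevity and reliance on established theory; what yours buys is a self-contained argument that makes explicit where the structure of the age process enters: the linear growth serves as the substitute for the weighted-norm or Lyapunov hypotheses that general unbounded-cost treatments impose, both for finiteness of $V_\alpha$ and for killing the tail term. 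In effect, your proof is a correct unpacking, specialized to this model, of the result the paper imports by citation.
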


\begin{proof}
Please see Appendix \ref{appendix:lemma:optimality-eq}.
\end{proof}

The value iteration in Eq. (\ref{eq:discount-itr}) is helpful for characterizing $V_{\alpha}(\mathbf{s})$, e.g., showing that   $V_{\alpha}(\mathbf{s})$   is a non-decreasing function in the following.

\begin{proposition}  \label{lemma:monotone}
$V_{\alpha}(x_i, \mathbf{x}_{-i}, \boldsymbol{\lambda})$ is a non-decreasing function in $x_i$, for  given $\mathbf{x}_{-i}=(x_1, \cdots, x_N) - \{x_i\}$ and $\boldsymbol{\lambda}=(\lambda_1, \cdots, \lambda_N)$.
\end{proposition}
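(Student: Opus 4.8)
The plan is to prove the statement for the discounted value function $V_{\alpha}$ by induction on the value-iteration sequence $V_{\alpha,n}$ of Eq.~(\ref{eq:discount-itr}), and then pass to the limit using the convergence $V_{\alpha,n}\to V_{\alpha}$ guaranteed by Proposition~\ref{lemma:optimality-eq}. Fix the coordinate $i$ together with $\mathbf{x}_{-i}$ and $\boldsymbol{\lambda}$, and take two states $\mathbf{s}=(x_i,\mathbf{x}_{-i},\boldsymbol{\lambda})$ and $\mathbf{s}^{+}=(x_i^{+},\mathbf{x}_{-i},\boldsymbol{\lambda})$ differing only in the $i$-th age, with $x_i^{+}\ge x_i$. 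The statement carried through the induction is $V_{\alpha,n}(\mathbf{s}^{+})\ge V_{\alpha,n}(\mathbf{s})$ for every $n$; the base case is immediate since $V_{\alpha,0}\equiv 0$.

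For the inductive step I would \emph{not} argue that the immediate cost $C$ and the continuation term are separately monotone in $x_i$ (this fails), but instead use the standard device of taking a minimizer $d^{\star}$ of the right-hand side of Eq.~(\ref{eq:discount-itr}) for the larger state $\mathbf{s}^{+}$ and feeding the \emph{same} action into the (sub-optimal, hence upper-bounding) expression for $\mathbf{s}$. It then suffices to compare, under a coupling of the fresh next-slot arrivals so that both successor states draw identical $\boldsymbol{\lambda}'$, the quantity $C(\mathbf{s}^{+},d^{\star})+\alpha E[V_{\alpha,n}(\cdot)]$ against $C(\mathbf{s},d^{\star})+\alpha E[V_{\alpha,n}(\cdot)]$.

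The case analysis on $d^{\star}$ is short. If $d^{\star}\ne i$, or $d^{\star}=i$ with $\lambda_i=0$, then coordinate $i$ is not reset, so by the definition of $C$ the immediate cost rises by exactly $x_i^{+}-x_i\ge 0$ (the $-x_d\lambda_d$ term does not involve $x_i$), while the two coupled successor states again differ only in coordinate $i$, with the larger one still larger ($x_i^{+}+1\ge x_i+1$); the induction hypothesis then makes the continuation term of $\mathbf{s}^{+}$ no smaller. The only delicate case is $d^{\star}=i$ with $\lambda_i=1$: here Eq.~(\ref{eq:age-dynamic}) resets age $i$ to $1$ regardless of its previous value, the $-x_i\lambda_i$ term in $C$ exactly cancels the contribution of $x_i$ so that $C(\mathbf{s}^{+},i)=C(\mathbf{s},i)$, and the two successor states become \emph{identical}, giving equal continuation terms. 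In every case $V_{\alpha,n+1}(\mathbf{s}^{+})\ge V_{\alpha,n+1}(\mathbf{s})$, closing the induction; letting $n\to\infty$ yields the claim for $V_{\alpha}$.

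The main obstacle to flag is precisely this reset case: because $C$ is \emph{not} monotone in $x_i$ when user $i$ is served with a packet present, the naive ``cost monotone plus transitions preserve monotonicity'' argument does not apply, which is why the proof must route the comparison through a common action chosen as the optimizer of the larger state and exploit the cancellation in the cost. A secondary point deserving a line of verification is that, under the coupled dynamics, both trajectories stay in the admissible state space $\mathbf{S}$, so that the induction hypothesis is only ever invoked on legitimate states.
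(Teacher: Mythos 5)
Your proof is correct, and it is essentially the paper's proof: induction on the value-iteration sequence of Eq.~(\ref{eq:discount-itr}), base case $V_{\alpha,0}\equiv 0$, inductive step comparing the two states action by action, and passing to the limit $n\to\infty$ via Proposition~\ref{lemma:optimality-eq}. Your device of taking the minimizer $d^{\star}$ at the larger state and feeding it into the smaller state, under a coupling of the next arrivals, establishes exactly the pointwise-in-$d$ comparison that the paper uses before taking the minimum; the two arguments have the same mathematical content.

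One correction, however: the ``main obstacle'' you flag is not actually an obstacle, and your claim that the separate-monotonicity argument ``fails'' is wrong. In the reset case $d=i$, $\lambda_i=1$, the immediate cost is $C(\mathbf{s},i)=\sum_{j\neq i}(x_j+1)+1$, i.e.\ the $x_i+1$ term and the $-x_i\lambda_i$ term cancel, so $C$ is \emph{constant} in $x_i$ --- and a constant function is (weakly) non-decreasing, which is all the paper's statement requires. Likewise the successor state in that case does not depend on $x_i$, so the continuation term is constant, hence also non-decreasing. Thus, for \emph{every} fixed action $d$, both the cost and the expected continuation are non-decreasing in $x_i$, and the minimum over $d$ of non-decreasing functions is non-decreasing; this short argument is precisely the paper's proof and is valid as stated. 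Your case analysis is a correct, more explicit unpacking of it, not a repair of a broken argument.
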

\begin{proof}
Please see Appendix \ref{appendix:lemma:monotone}.
\end{proof}

Using  Propositions~\ref{lemma:optimality-eq} and \ref{lemma:monotone} for the discounted cost case, we show that the MDP $\Delta$  has a deterministic stationary $\Delta$-optimal policy, as follows.

\begin{lemma} \label{thm:stationary}
There exists a  deterministic stationary policy  that is $\Delta$-optimal. 
Moreover, there exists a finite constant $V^*=\lim_{\alpha \rightarrow 1}(1-\alpha)V_{\alpha}(\mathbf{s})$ for every state $\mathbf{s}$ such that the minimum average  cost is $V^*$, independent of  initial state $\mathbf{s}$. 
\end{lemma}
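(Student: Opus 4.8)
The plan is to establish the lemma by the vanishing-discount (discounted-to-average) method, reducing the claim to verifying a standard set of sufficient conditions for average-cost MDPs with a countable state space and unbounded per-stage costs (see \cite{MDP:Bertsekas}). Those conditions are: (i) $V_\alpha(\mathbf{s}) < \infty$ for every $\mathbf{s}$ and every $\alpha \in (0,1)$; (ii) there is a constant $L \ge 0$ with $h_\alpha(\mathbf{s}) \ge -L$ for all $\mathbf{s}$ and $\alpha$; and (iii) there is a function $M(\mathbf{s}) \ge 0$ with $h_\alpha(\mathbf{s}) \le M(\mathbf{s})$ for all $\mathbf{s}$ and $\alpha$, such that $\sum_{\mathbf{s}'} P_{\mathbf{s},\mathbf{s}'}(d) M(\mathbf{s}') < \infty$ for every $\mathbf{s}$ and $d$. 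Once (i)--(iii) hold, the cited theory yields a deterministic stationary $\Delta$-optimal policy, a constant optimal average cost $V^*$ independent of $\mathbf{s}$, and the identity $V^* = \lim_{\alpha \to 1}(1-\alpha)V_\alpha(\mathbf{s})$, which is precisely the statement of the lemma.

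Condition (i) is immediate: under any policy each age grows by at most one per slot, so $C(\mathbf{S}(t),D(t))$ is deterministically $O(t)$, whence $V_\alpha(\mathbf{s}) \le \sum_{t \ge 0}\alpha^t O(t) < \infty$. For the lower bound (ii) I would lean on Proposition~\ref{lemma:monotone}: $V_\alpha$ is non-decreasing in each age coordinate, and an analogous value-iteration/coupling argument (built on Proposition~\ref{lemma:optimality-eq}) shows it is non-increasing in each arrival indicator $\lambda_i$, since an available packet can only enlarge the set of feasible one-step age reductions. Consequently the reference state $\mathbf{0}=(1,2,\ldots,N,1,\ldots,1)$, having the smallest ages and all packets present, essentially minimizes $V_\alpha(\cdot)$; the residual gap between $V_\alpha(\mathbf{0})$ and $\inf_{\mathbf{s}}V_\alpha(\mathbf{s})$, which arises only from relabelling the minimal age pattern $(1,\ldots,N)$ among users with distinct $p_i$, is bounded uniformly in $\alpha$ by the finite expected cost of steering the chain between two bounded-age states, giving $h_\alpha(\mathbf{s}) \ge -L$.

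The crux is condition (iii), the uniform-in-$\alpha$ upper bound. Here I would fix a stationary reference-return policy $\pi$ and let $\tau$ be the first-passage time to $\mathbf{0}$ starting from $\mathbf{s}$. Conditioning on the first passage and using $\alpha^t \le 1$, $\alpha^\tau \le 1$, and $V_\alpha(\mathbf{0}) \ge 0$ gives
\begin{align*}
V_\alpha(\mathbf{s}) \le E_\pi\Big[\sum_{t=0}^{\tau-1}\alpha^t C(\mathbf{S}(t),D(t))\,\Big|\,\mathbf{S}(0)=\mathbf{s}\Big] + E_\pi[\alpha^\tau \mid \mathbf{S}(0)=\mathbf{s}]\,V_\alpha(\mathbf{0}),
\end{align*}
so that
\begin{align*}
h_\alpha(\mathbf{s}) = V_\alpha(\mathbf{s}) - V_\alpha(\mathbf{0}) \le E_\pi\Big[\sum_{t=0}^{\tau-1} C(\mathbf{S}(t),D(t))\,\Big|\,\mathbf{S}(0)=\mathbf{s}\Big] =: M(\mathbf{s}).
\end{align*}
It then remains to exhibit a policy under which the expected undiscounted cost accrued before reaching $\mathbf{0}$ is finite. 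I would take a largest-age-first policy that transmits to the oldest user whenever a packet for it is present; since every $p_i > 0$, the exact reference state $\mathbf{0}$ (including its all-ones arrival pattern, which recurs with probability $\prod_i p_i > 0$ each slot) is hit in geometrically distributed time, so $\tau$ has finite mean and $M(\mathbf{s})$ is finite and grows only polynomially (roughly quadratically) in the ages of $\mathbf{s}$. Because from any $\mathbf{s}$ there are only finitely many successor states, each with ages at most $x_i+1$, the one-step expectation $\sum_{\mathbf{s}'}P_{\mathbf{s},\mathbf{s}'}(d)M(\mathbf{s}')$ is a finite sum of finite terms, establishing (iii).

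I expect the main obstacle to be exactly this last step: constructing the return policy and bounding its first-passage cost uniformly in $\alpha$ despite the unbounded ages, and in particular controlling the extra slots needed to realize the precise arrival coordinates of $\mathbf{0}$. The transient states with $x_i > N$ are absorbed into the same estimates, since they carry even larger ages and are left after finitely many slots; this also substantiates the earlier remark that decisions taken before $\mathbf{X}(t)$ enters $\mathbf{X}$ do not affect the minimum average cost, as they contribute to only a vanishing fraction of the time-average.
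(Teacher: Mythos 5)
Your proposal is correct, and it lives inside the same framework as the paper's proof: the vanishing-discount argument for countable-state average-cost MDPs with unbounded costs. (The conditions (i)--(iii) you attribute to \cite{MDP:Bertsekas} are precisely the assumptions of the paper's actual reference, \cite{stationary-policy:Sennott}.) The difference is in how the decisive condition is verified. The paper never checks your condition (iii) directly; instead it invokes Sennott's alternative sufficient condition that some deterministic stationary policy induce an irreducible, aperiodic chain with \emph{finite average cost}, and it proves that finiteness by recasting the ages as an ``age-queueing network'' in which the largest-age-first rule is a maximum-weight scheduler, then citing throughput-optimality and capacity-region results \cite{neely:book} to get stability, hence finite average age. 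You instead bound $h_\alpha(\mathbf{s})$ from above by the expected undiscounted first-passage cost to the reference state $\mathbf{0}$ under a largest-age-first policy, and show this cost is finite by a positive-probability block argument: in each window of $N+1$ slots, with probability at least $\delta=\prod_{m}\bigl[p_m\prod_{j\neq m}(1-p_j)\bigr]\prod_i p_i>0$ the arrivals force updates of users $N,N-1,\dots,1$ in order followed by an all-ones arrival vector, so the hitting time of $\mathbf{0}$ has geometric tails and the accrued cost is finite (quadratic in the starting ages). Your route is more elementary and self-contained---no queueing/stability machinery is needed, the transient states with $x_i>N$ for all $i$ are handled uniformly rather than through the paper's separate two-set argument, and your lower-bound step (ii) coincides with the paper's (monotonicity plus finitely many steering costs $C_{\mathbf{0},\mathbf{s}}$ over states with ages at most $N$). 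What the paper's route buys is brevity, since it can lean on known max-weight results rather than exhibit an explicit return path.

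Two small points to tighten. First, your block event implicitly needs $p_i<1$ for all $i$: if some $p_i=1$, the event ``only user $m\neq i$ has an arrival'' is null, and largest-age-first can then cycle forever without visiting the exact state $\mathbf{0}$ (e.g., $N=3$, all $p_i=1$, starting from ages $(1,3,2)$), so one would perturb the block or re-choose the reference state; note the paper's claim that its chain is ``obviously'' irreducible and aperiodic carries the same implicit assumption, so this is not a gap relative to the paper. Second, in (ii) the clean statement is that every state in $\mathbf{S}$ dominates coordinate-wise (in the ages, with the same arrival vector) some state whose ages are all at most $N$, so that $L$ can be taken as the maximum of the finitely many steering costs $C_{\mathbf{0},\mathbf{s}}$ over that set---this replaces your slightly vague ``relabelling'' discussion and is exactly the paper's choice of $L$.
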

\begin{proof}
Please see Appendix \ref{appendix:thm:stationary}.
\end{proof}

We want to further elaborate  on Lemma~\ref{thm:stationary}. 
\begin{itemize}
	\item First, note that there is no condition for the existence of a deterministic stationary policy that is $\Delta$-optimal. In general, we need some  conditions to ensure that the  reduced Markov chain by a  deterministic stationary policy is positive recurrent. Intuitively, we can think of the age of our problem as an age-queuing system,  consisting of an age-queue, input to the queue, and a server. The input rate is one per slot since the age increases by one for each slot, while the server can serve an \textit{infinite} number of age-packets for each service opportunity. As such, we always can find a scheduling algorithm such that the average arrival rate is  less than  the service rate and thus the reduced Markov chain is positive recurrent. Please see the proof in Appendix~\ref{appendix:thm:stationary} for details.
	\item Second, since our MDP $\Delta$ involves a \textit{long-run average cost optimization} with a \textit{countably infinite} state space and \textit{unbounded} immediate cost, a $\Delta$-optimal policy of such an MDP might not satisfy the  average cost optimaility equation like Eq. (\ref{eq:discounte-optimality-equation}) (see \cite{cavazos1991counterexample} for a counter-example), even though the optimality of a deterministic stationary policy is established in Lemma \ref{thm:stationary}. 
\end{itemize}

In addition to the optimality of  deterministic stationary policies, we show that a $\Delta$-optimal policy  has a  nice structure. To investigate such structural results not only facilitates the scheduling algorithm  design in Section~\ref{section:mdp}, but also simplifies the calculation of the Whittle index in Section~\ref{section:whittle}. 

\begin{define}
A \textit{switch-type} policy  is a special deterministic stationary policy of the MDP $\Delta$: for  given $\mathbf{x}_{-i}$ and $\boldsymbol{\lambda}$, if the action of the policy for state $\mathbf{s}=(x_i, \mathbf{x}_{-i}, \boldsymbol{\lambda})$ is $d_{\mathbf{s}}=i$, then the action for state $\mathbf{s}'=(x_i+1,\mathbf{x}_{-i},\boldsymbol{\lambda})$ is $d_{\mathbf{s}'}=i$ as well. 
\end{define}

In general, showing that a $\Delta$-optimal policy satisfies a structure relies on an optimality equation; however, as  discussed, the average cost optimality equation for the MDP $\Delta$ might not be available. To resolve this issue, we first investigate the discounted cost case by the well-established value iteration in Eq.~(\ref{eq:discount-itr}), and then extend to the average cost case. 

\begin{theorem} \label{theorem:optimal-switch}
There exists a  switch-type policy of the MDP $\Delta$ that is $\Delta$-optimal.
\end{theorem}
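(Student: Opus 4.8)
The plan is to first establish the switch-type structure for the $\alpha$-discounted problem, where the optimality equation of Proposition~\ref{lemma:optimality-eq} is available, and then transfer the structure to the average-cost problem by letting $\alpha \to 1$. For the discounted problem, write the state--action value as $Q_\alpha(\mathbf{s},d) = C(\mathbf{s},d) + \alpha E[V_\alpha(\mathbf{s}')]$, so that a $\Delta_\alpha$-optimal policy chooses $d \in \argmin_{d} Q_\alpha(\mathbf{s},d)$. Fix a user $i$ together with $\mathbf{x}_{-i}$ and $\boldsymbol{\lambda}$. It suffices to show that for every competing action $d \neq i$, the difference $Q_\alpha((x_i,\mathbf{x}_{-i},\boldsymbol{\lambda}),i) - Q_\alpha((x_i,\mathbf{x}_{-i},\boldsymbol{\lambda}),d)$ is non-increasing in $x_i$. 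Indeed, if action $i$ attains the minimum at some $x_i$, then all these differences are $\le 0$ there, and non-increasingness keeps them $\le 0$ at $x_i+1$, so action $i$ is still optimal at $x_i+1$; since serving $i$ only becomes relatively more attractive as $x_i$ grows, a fixed tie-breaking order yields a deterministic stationary policy whose ``serve-$i$'' region is an upper interval in $x_i$ for every $i$ simultaneously, i.e., a switch-type policy.

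To verify the pairwise monotonicity, note first that it suffices to treat $\lambda_i = 1$, since serving a user with no fresh arrival ($\lambda_i = 0$) leaves its age advancing to $x_i+1$ exactly as idling does and can be excluded by tie-breaking. With $\lambda_i = 1$, the immediate cost satisfies $C(\mathbf{s},i) = 1 + \sum_{j \neq i}(x_j + 1)$, which is independent of $x_i$, whereas $C(\mathbf{s},d) = \sum_{j}(x_j+1) - x_d\lambda_d$ still contains the term $x_i+1$; hence $C(\mathbf{s},i) - C(\mathbf{s},d) = x_d\lambda_d - x_i$ is strictly decreasing in $x_i$. For the discounted future term, serving $i$ resets its age to $1$, so the next state under action $i$ has its $i$-th age component fixed at $1$ regardless of $x_i$, making $E[V_\alpha(\mathbf{s}'_i)]$ independent of $x_i$; under any action $d \neq i$ the $i$-th age component advances to $x_i+1$, so by the monotonicity of $V_\alpha$ in its $i$-th coordinate (Proposition~\ref{lemma:monotone}) the quantity $E[V_\alpha(\mathbf{s}'_d)]$ is non-decreasing in $x_i$. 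Therefore $\alpha( E[V_\alpha(\mathbf{s}'_i)] - E[V_\alpha(\mathbf{s}'_d)])$ is non-increasing in $x_i$, and adding the strictly decreasing immediate-cost difference shows $Q_\alpha(\mathbf{s},i)-Q_\alpha(\mathbf{s},d)$ is non-increasing in $x_i$. This establishes the existence of a switch-type $\Delta_\alpha$-optimal policy for every $\alpha$, using only the cost structure and Proposition~\ref{lemma:monotone}.

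It then remains to pass to the limit $\alpha \to 1$. Because the action set at each state is finite and the state space $\mathbf{S}$ is countable, I would take a sequence $\alpha_n \uparrow 1$ and use a diagonal argument to extract a subsequence along which the switch-type $\Delta_{\alpha_n}$-optimal policies agree, state by state, with a single deterministic stationary policy $\pi^*$. Switch-type is a closed condition on each $(\mathbf{x}_{-i},\boldsymbol{\lambda})$ cross-section, so it is inherited by $\pi^*$. Finally, the convergence $V^* = \lim_{\alpha \to 1}(1-\alpha)V_\alpha(\mathbf{s})$ furnished by Lemma~\ref{thm:stationary}, together with the standard vanishing-discount argument, certifies that $\pi^*$ attains the minimum average cost, so $\pi^*$ is the desired switch-type $\Delta$-optimal policy.

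The main obstacle is exactly this last step. Because the MDP has a countably infinite state space and unbounded immediate costs, the average-cost optimality equation need not hold (as already noted after Lemma~\ref{thm:stationary}), so the switch structure cannot be read off directly at $\alpha = 1$ the way it is for each fixed $\alpha$. The delicate points are ensuring that the subsequential limit policy is well defined over all infinitely many states via diagonalization, verifying that this limit genuinely remains average-optimal rather than merely a limit of discounted-optimal policies, and confirming that the switch-type property survives the pointwise limit. The discounted analysis of the preceding paragraph, by contrast, is routine once Proposition~\ref{lemma:monotone} is in hand.
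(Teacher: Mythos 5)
Your proposal is correct and takes essentially the same route as the paper: it first establishes the switch structure for the $\alpha$-discounted problem by using the monotonicity of $V_{\alpha}$ in each age coordinate (Proposition~\ref{lemma:monotone}) to show that the advantage of serving user $i$ over any competing action is non-increasing in $x_i$, and then passes to the average-cost case along a subsequence of discount factors whose optimal policies converge pointwise to an average-optimal limit policy. The ``delicate last step'' you flag is resolved in the paper exactly as you suggest, by invoking Sennott's vanishing-discount result \cite{stationary-policy:Sennott}, whose hypotheses are the two conditions already verified in the proof of Lemma~\ref{thm:stationary}, so that the limit of the discounted-optimal (switch-type) policies is guaranteed to be $\Delta$-optimal.
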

\begin{proof}
First, we start with the discounted cost case, and show that a $\Delta_{\alpha}$-optimal scheduling algorithm is  switch-type.  Let $\nu_{\alpha}(\mathbf{s};d)=C(\mathbf{s},d) + \alpha E[V_\alpha(\mathbf{s}')]$. Then, $V_{\alpha}(\mathbf{s})=\min_{d \in \{0, 1, \cdots, N\}} \nu_{\alpha}(\mathbf{s};d)$. Without loss of generality, we suppose that a $\Delta_{\alpha}$-optimal action at state $\mathbf{s}=(x_1,\mathbf{x}_{-1}, \boldsymbol{\lambda})$ is to update the user $u_1$ with $\lambda_1=1$. Then, according to the optimality of $d^*_{(x_1, \mathbf{x}_{-1}, \boldsymbol{\lambda})}=1$,
\begin{align*}
\nu_{\alpha}(x_1,\mathbf{x}_{-1},\boldsymbol{\lambda};1) - \nu_{\alpha}(x_1,\mathbf{x}_{-1},\boldsymbol{\lambda};j) \leq 0, 
\end{align*}
for all $j \neq 1$.

Let $\mathbf{1}=(1, \cdots, 1)$ be the vector with all entries being one. Let $\mathbf{x}_i=(0, \cdots, x_i, \cdots,0)$ be the zero vector except for the $i$-th entry being replaced by $x_i$.  To demonstrate the switch-type structure, we consider the following two cases:
\begin{enumerate}
	\item \textit{For any other user $u_j$ with $\lambda_j=1$}: Since $V_{\alpha}(x_1, \mathbf{x}_{-1},\boldsymbol{\lambda})$ is a non-decreasing function in $x_1$ (see Proposition \ref{lemma:monotone}),  we have  
\begin{align*}
&\nu_{\alpha}(x_1+1, \mathbf{x}_{-1},\boldsymbol{\lambda};1) - \nu_{\alpha}(x_1+1, \mathbf{x}_{-1},\boldsymbol{\lambda};j) \\
= & x_j-(x_1+1)+\alpha E[V_{\alpha}(1, \mathbf{x}_{-1}+\mathbf{1},\boldsymbol{\lambda}')\\
&- V_{\alpha}(x_1+2, \mathbf{x}_{-1}+\mathbf{1}-\mathbf{x}_j,\boldsymbol{\lambda}') ]\\
\leq & x_j-x_1+\alpha E[V_{\alpha}(1, \mathbf{x}_{-1}+\mathbf{1},\boldsymbol{\lambda}')\\
&- V_{\alpha}(x_1+1, \mathbf{x}_{-1}+\mathbf{1}-\mathbf{x}_j,\boldsymbol{\lambda}') ]\\
= & \nu_{\alpha}(x_1, \mathbf{x}_{-1},\boldsymbol{\lambda};1) - \nu_{\alpha}(x_1, \mathbf{x}_{-1},\boldsymbol{\lambda};j) \leq 0,
\end{align*} 
where $\boldsymbol{\lambda}'$ is the next arrival vector. 
	\item \textit{For any other user $u_j$ with $\lambda_j=0$}: Similarly, we have
	\begin{align*}
&\nu_{\alpha}(x_1+1, \mathbf{x}_{-1},\boldsymbol{\lambda};1) - \nu_{\alpha}(x_1+1, \mathbf{x}_{-1},\boldsymbol{\lambda};j) \\
= &-(x_1+1)+\alpha E[V_{\alpha}(1, \mathbf{x}_{-1}+\mathbf{1},\boldsymbol{\lambda}')\\
&- V_{\alpha}(x_1+2, \mathbf{x}_{-1}+\mathbf{1},\boldsymbol{\lambda}') ]\leq 0.
\end{align*} 
\end{enumerate}
Considering the two cases,  a $\Delta_{\alpha}$-optimal action for state $(x_1+1, \mathbf{x}_{-1},\boldsymbol{\lambda})$ is still to update $u_1$, yielding the switch-type structure.

Then, we preceed to establish the optimality for the average cost case. Let $\{\alpha_n\}_{n=1}^{\infty}$ be a sequence of the discount factors. According to \cite{stationary-policy:Sennott}, if the both conditions in Appendix~\ref{appendix:thm:stationary} hold, then there exists a subsequence $\{\beta_n\}_{n=1}^{\infty}$ such that a $\Delta$-optimal algorithm is the limit point of the $\Delta_{\beta_n}$-optimal policies. By induction on $\beta_n$ again, we obtain that a $\Delta$-optimal is  switch-type as well.
\end{proof}

\subsection{Finite-state MDP approximations}\label{subsection:finite-state}
The classical method for solving an MDP is to apply a value iteration method \cite{MDP:Puterman}. However, as mentioned, the average cost optimality equation might not exist. Even though average cost value iteration holds like Eq.~(\ref{eq:discount-itr}), the value iteration cannot work in practice, as we need to update an infinite number of states for each iteration. To address the issue, we propose a sequence of finite-state approximate MDPs. In general, a sequence of approximate MDPs might not converge to the original MDP according to \cite{MDP:Sennott}. Thus, we will rigorously show the convergence of the proposed sequence.

Let $X^{(m)}_i(t)$ be a \textit{virtual age} of information for user $u_i$ in slot $t$, with the dynamic being 
\begin{align*}
X^{(m)}_{i}(t+1)=\left\{
\begin{array}{ll}
1 & \text{if $\Lambda_i(t)=1$, $D(t)=i$} ;\\
\left[X^{(m)}_i(t)+1\right]^+_m & \text{else},
\end{array}
\right.
\end{align*}
where we define the notation $[x]^+_m$ by $[x]^+_m=x$ if $x \leq m$ and $[x]^+_m=m$ if $x >m$, i.e., we truncate the real age by $m$. This is different from Eq.~(\ref{eq:age-dynamic}). While the real age $X_i(t)$ can go beyond $m$, the virtual age $X^{(m)}_i(t)$ is at most $m$. Here, we reasonably choose the truncation $m$ to be greater than the number $N$ of users, i.e., $m > N$. Later, in Appendix~\ref{appendix:theorem:finite-approximation} (see Remark~\ref{remark to truncation}), we will discuss some mathematical reasons for the choice.

By $\{\Delta^{(m)}\}_{m=N+1}^{\infty}$ we define a sequence of approximate MDPs for $\Delta$, where each MDP $\Delta^{(m)}$ is the same as the original MDP $\Delta$  except:
\begin{itemize}
	\item \textbf{States}: The state  in slot $t$ is  $S^{(m)}(t)=(X^{(m)}_1(t),$ $\cdots, X^{(m)}_N(t), \Lambda_1(t), \cdots,\Lambda_N(t))$. Let $\mathbf{S}^{(m)}$ be the state space.
	\item \textbf{Transition probabilities}: Under  action $D(t)=d$, the transition probability $P^{(m)}_{\mathbf{s},\mathbf{s}'}(d)$	of the MDP $\Delta^{(m)}$ from state $\mathbf{s}=(x_1, \cdots, x_N, \lambda_1, \cdots, \lambda_N)$ to state $\mathbf{s}'=(x_1', \cdots, x_N',$ $\lambda_1', \cdots, \lambda_N')$ is
\begin{align*}
P_{\mathbf{s},\mathbf{s}'}(d)=
\prod_{i:\lambda'_i=1} p_i \prod_{i:\lambda'_i=0} (1-p_i), 
\end{align*}
if $x_i'=[(x_i+1)-x_i\mathbbm{1}_{i =d \text{\,\,and\,\,} \lambda_i= 1}]^+_m$ for all $i=1, \cdots, N$.
\end{itemize}
Remember that the state space $\mathbf{S}$ of the MDP $\Delta$ includes some transient age vectors, e.g., $(N, \cdots, N)$. That is because, if not, the truncated state space $\mathbf{S}^{(m)}$ would  not be a subset of original state space $\mathbf{S}$.

Next, we  show that  the proposed sequence of approximate MDPs converges to the $\Delta$-optimum. 

\begin{theorem} \label{theorem:finite-approximation}
Let $V^{(m)*}$ be the minimum average cost for the MDP  $\Delta^{(m)}$. Then, $V^{(m)*} \rightarrow V^*$ as $m \rightarrow \infty$.
\end{theorem}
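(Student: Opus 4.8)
The plan is to reduce the average-cost convergence to a statement about discounted costs, where the value iteration of Proposition~\ref{lemma:optimality-eq} gives a direct handle, and then to pass back to the average cost via the vanishing-discount limit established in Lemma~\ref{thm:stationary}. Writing $V^{(m)}_\alpha(\mathbf{s})$ for the minimum expected total $\alpha$-discounted cost of the truncated MDP $\Delta^{(m)}$, I would prove two things: (i) for each fixed $\alpha \in (0,1)$ and each state $\mathbf{s}$, $V^{(m)}_\alpha(\mathbf{s}) \to V_\alpha(\mathbf{s})$ as $m \to \infty$; and (ii) the two limits $m \to \infty$ and $\alpha \to 1$ may be interchanged after the normalization by $(1-\alpha)$. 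Step (ii) is the genuine content; step (i) is comparatively soft.

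For step (i), I would exploit the fact that truncation only ever lowers an age: along any fixed sample path and under any fixed policy, the virtual age obeys $X^{(m)}_i(t) \le X_i(t)$, so the per-slot cost in $\Delta^{(m)}$ never exceeds that in $\Delta$. This yields $V^{(m)}_\alpha(\mathbf{s}) \le V_\alpha(\mathbf{s})$, and the same comparison across truncation levels shows $V^{(m)}_\alpha(\mathbf{s})$ is nondecreasing in $m$. For the matching lower bound I would use value iteration: starting from a fixed $\mathbf{s}$, only finitely many ages are reachable in $n$ steps, so for $m$ large (depending on $n$ and $\mathbf{s}$) the truncation is inactive over those $n$ steps and $V^{(m)}_{\alpha,n}(\mathbf{s}) = V_{\alpha,n}(\mathbf{s})$. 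Since the immediate cost is nonnegative, $V^{(m)}_\alpha(\mathbf{s}) \ge V^{(m)}_{\alpha,n}(\mathbf{s}) = V_{\alpha,n}(\mathbf{s})$; letting $m \to \infty$ and then $n \to \infty$ gives $\lim_m V^{(m)}_\alpha(\mathbf{s}) \ge V_\alpha(\mathbf{s})$, so the monotone limit equals $V_\alpha(\mathbf{s})$.

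The hard part is step (ii). For the finite-state $\Delta^{(m)}$ the standard vanishing-discount argument gives $V^{(m)*} = \lim_{\alpha \to 1}(1-\alpha)V^{(m)}_\alpha(\mathbf{s})$, while Lemma~\ref{thm:stationary} gives $V^* = \lim_{\alpha \to 1}(1-\alpha)V_\alpha(\mathbf{s})$; combining these with step (i) reduces the theorem to interchanging $\lim_m$ and $\lim_{\alpha \to 1}$. To justify the interchange I would verify the conditions of Sennott's approximation framework \cite{MDP:Sennott,stationary-policy:Sennott}: a uniform (in both $m$ and $\alpha$) upper bound $(1-\alpha)V^{(m)}_\alpha(\mathbf{s}) \le K$, and uniform bounds $-L(\mathbf{s}) \le h^{(m)}_\alpha(\mathbf{s}) \le U(\mathbf{s})$ on the relative cost functions $h^{(m)}_\alpha(\mathbf{s}) = V^{(m)}_\alpha(\mathbf{s}) - V^{(m)}_\alpha(\mathbf{0})$. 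The lower bound on $h^{(m)}_\alpha$ is immediate from Proposition~\ref{lemma:monotone}, which transfers verbatim to each $\Delta^{(m)}$. For the upper bound I would construct, from any state $\mathbf{s}$, an explicit suboptimal policy that drives the system down to a neighborhood of the reference state $\mathbf{0}$ within a bounded number of slots — possible because one user can be updated per slot and the truncation $m > N$ (the choice explained in Remark~\ref{remark to truncation}) keeps the ages in $\mathbf{X}$ distinct — and bound the excess discounted cost it incurs by a function $U(\mathbf{s})$ polynomial in the ages and independent of $m$ and $\alpha$.

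With these uniform bounds in hand, the relative cost functions are pointwise bounded and, along the subsequence of discount factors furnished by Lemma~\ref{thm:stationary}, precompact; a diagonal argument then lets me extract limits as $m \to \infty$ and $\alpha \to 1$ in either order and identify them, giving $V^{(m)*} \to V^*$. I expect the construction and estimation of the uniform upper bound $U(\mathbf{s})$ — uniform simultaneously over the truncation level $m$ and the discount factor $\alpha$ — to be the main obstacle, since it is precisely the failure of such uniformity that \cite{MDP:Sennott} shows can make a truncation sequence diverge from the original MDP.
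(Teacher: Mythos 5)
Your overall architecture matches the paper's: both reduce the claim to Sennott's finite-state approximation framework \cite{finite-state:Sennott} by verifying (a) bounds on the relative cost functions $h^{(m)}_\alpha(\mathbf{s})$ that are uniform in both $m$ and $\alpha$, and (b) a one-sided comparison between truncated and original values. Your step (i) is sound, and its first half — the coupling argument giving $V^{(m)}_\alpha(\mathbf{s}) \le V_\alpha(\mathbf{s})$ — is exactly the paper's second condition (which the paper proves instead by induction on the value iteration, using monotonicity of $V_\alpha$ and the structure of the truncated kernel). The genuine gap is precisely where you flag it: the uniform-in-$m$ upper bound $U(\mathbf{s})$ is never constructed, and the construction you sketch cannot work as stated. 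No policy can drive the system to the reference state $\mathbf{0}=(1,2,\cdots,N,1,\cdots,1)$ ``within a bounded number of slots,'' because updating user $u_i$ requires the random event $\Lambda_i(t)=1$; the first-passage time to $\mathbf{0}$ is unbounded (only its expectation is finite), so any bound must be on expected first-passage costs, not on a deterministic horizon.

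The paper closes this gap with a device absent from your proposal, and it is the crux of the proof. Take the age-independent randomized stationary policy $f$ that updates each user having an arrival with equal probability, and let $C_{\mathbf{s},\mathbf{0}}(f)$ and $C^{(m)}_{\mathbf{s},\mathbf{0}}(f)$ denote expected first-passage costs to $\mathbf{0}$ in $\Delta$ and $\Delta^{(m)}$. Then $h^{(m)}_\alpha(\mathbf{s}) \le C^{(m)}_{\mathbf{s},\mathbf{0}}(f)$, and the uniformity in $m$ follows from the comparison $C^{(m)}_{\mathbf{s},\mathbf{0}}(f) \le C_{\mathbf{s},\mathbf{0}}(f) =: F(\mathbf{s})$: the truncated kernel $P^{(m)}$ is obtained from $P$ by redistributing ``excess probability'' from states outside $\mathbf{S}^{(m)}$ onto states with smaller ages, and since $f$ ignores the ages, its first-passage costs are monotone in the age vector, so this redistribution can only decrease the expected cost. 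The same comparison — not Proposition~\ref{lemma:monotone} alone — is needed for your lower bound: monotonicity only shows that states cheaper than $\mathbf{0}$ must have all ages at most $N$ (a finite set), and for those states you still need $h^{(m)}_\alpha(\mathbf{s}) \ge -C^{(m)}_{\mathbf{0},\mathbf{s}}(f) \ge -C_{\mathbf{0},\mathbf{s}}(f)$ uniformly in $m$; this is also why the paper insists on $m>N$ (Remark~\ref{remark to truncation}), since otherwise $C_{\mathbf{0},\mathbf{s}}(f)$ is infinite for some truncated states and no finite $L$ exists. With $F$ and $L$ so obtained, the cited approximation theorem applies directly, and neither your pointwise limit $V^{(m)}_\alpha \to V_\alpha$ nor the diagonal extraction is needed.
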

\begin{proof}
Please see Appendix \ref{appendix:theorem:finite-approximation}.
\end{proof}

\subsection{Structural MDP scheduling algorithm}
Now, for a given truncation $m$, we are ready to propose a practical algorithm to solve the MDP $\Delta^{(m)}$. The traditional \textit{relative value iteration algorithm} (RVIA), as follows, can be applied to obtain an optimal deterministic stationary  policy for $\Delta^{(m)}$:
\begin{align}
V^{(m)}_{n+1}(\mathbf{s}) = \min_{d \in \{0,1, \cdots, N\}} C(\mathbf{s},d) +  E[V^{(m)}_{n}(\mathbf{s}') ] - V^{(m)}_n(\mathbf{0}),  \label{eq:rvia}
\end{align}
for all $\mathbf{s} \in \mathbf{S}^{(m)}$ where the initial value function is $V^{(m)}_0(\mathbf{s})=0$. 
For each iteration, we need to update  actions for \textit{all} virtual states by minimizing the RHS of Eq.~(\ref{eq:rvia}) as well as update  $V^{(m)}(\mathbf{s})$ for \textit{all} $\mathbf{s} \in \mathbf{S}^{(m)}$. As the size of the state space is $O(m^N)$,  the computational complexity of updating all virtual states in each iteration of  Eq. (\ref{eq:rvia})  is more than $O(m^N)$. The complexity primarily results from the  truncation $m$ of the  MDP $\Delta^{(m)}$ and the number $N$ of users. In this section, we focus on dealing with large values of $m$ for the case of fewer users. In next section we will solve the case of more users. 

To develop a low-complexity scheduling algorithm for fewer users, we propose  \textit{structural RVIA} in Alg. \ref{alg:offline}, which is an improved RVIA by leveraging the switch-type structure. In Alg. \ref{alg:offline}, we seek an optimal action $d^*_{\mathbf{s}}$ for each virtual state $\mathbf{s} \in \mathbf{S}^{(m)}$ by iteration. For each iteration, we update both the optimal action $d^*_{\mathbf{s}}$ and  $V^{(m)}(\mathbf{s})$ for all virtual states. If the switch property holds\footnote{The optimal policy for the truncated MDPs is  switch-type as well, according to the same proof as Theorem~\ref{theorem:optimal-switch}.}, we can determine an optimal action \textit{immediately} in Line \ref{alg:offline-switch}; otherwise we find an optimal action according to Line \ref{alg:offline-regular-update}. By $V_{\text{tmp}}(\mathbf{s})$ in Line~\ref{alg:offline-value-update-tmp} we temporarily keep the updated value, which will  replace $V^{(m)}(\mathbf{s})$ in Line~\ref{alg:offline-value-update}. Using the switch structure to prevent from the minimum operations on all  virtual  states  in the conventional RVIA, we can reduce the computational complexity resulting from the size $m$.  Next, we establish the optimality of the structural RVIA for the approximate MDP $\Delta^{(m)}$. 
\begin{theorem} \label{theorem:truncation}
For  MDP $\Delta^{(m)}$ with a given $m$, the limit point of $d^*_{\mathbf{s}}$ in Alg.~\ref{alg:offline} is a $\Delta^{(m)}$-optimal action for every virtual  state $\mathbf{s} \in \mathbf{S}^{(m)}$.  In particular, Alg.~\ref{alg:offline} converges to the $\Delta^{(m)}$-optimum in a finite number of iterations.  
\end{theorem}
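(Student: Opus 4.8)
The plan is to reduce the analysis of the structural RVIA in Alg.~\ref{alg:offline} to that of the conventional RVIA in Eq.~\eqref{eq:rvia}. Since the age is truncated at $m$ and the arrival indicators are binary, the MDP $\Delta^{(m)}$ has a \emph{finite} state space $\mathbf{S}^{(m)}$ and a finite action space. For such a finite-state, finite-action average-cost MDP, the conventional RVIA is known to converge to the $\Delta^{(m)}$-optimum \cite{MDP:Puterman}, provided the induced chain is unichain and aperiodic. The unichain property follows from the same positive-recurrence argument used for Lemma~\ref{thm:stationary} (the minimum average cost is independent of the initial state), and aperiodicity can be enforced by the standard aperiodicity transformation. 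Granting this, it suffices to prove that Alg.~\ref{alg:offline} generates \emph{exactly the same} sequence of value functions and greedy actions as Eq.~\eqref{eq:rvia}; the theorem then follows immediately from the convergence of the conventional RVIA.

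The heart of the argument is to show that every value iterate, not merely the fixed point, induces a switch-type greedy policy, so that the shortcut taken in Line~\ref{alg:offline-switch} always coincides with the true minimizer. I would proceed by induction on the iteration index $n$, proving two properties jointly: (i) $V^{(m)}_n(x_i,\mathbf{x}_{-i},\boldsymbol{\lambda})$ is non-decreasing in $x_i$ for each $i$, and (ii) the greedy action with respect to $V^{(m)}_n$ is switch-type. Property (i) is the finite-state, value-iterate analog of Proposition~\ref{lemma:monotone} and is propagated through Eq.~\eqref{eq:rvia} by the same monotonicity argument. Given (i), property (ii) is established exactly as in the proof of Theorem~\ref{theorem:optimal-switch}: writing $\nu_n(\mathbf{s};d)=C(\mathbf{s},d)+E[V^{(m)}_n(\mathbf{s}')]-V^{(m)}_n(\mathbf{0})$ and assuming the greedy action at $(x_1,\mathbf{x}_{-1},\boldsymbol{\lambda})$ updates $u_1$, the monotonicity of $V^{(m)}_n$ forces $\nu_n(x_1+1,\mathbf{x}_{-1},\boldsymbol{\lambda};1)-\nu_n(x_1+1,\mathbf{x}_{-1},\boldsymbol{\lambda};j)\le 0$ for every competitor $j$, so the greedy action at $(x_1+1,\mathbf{x}_{-1},\boldsymbol{\lambda})$ still updates $u_1$.

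With the switch property secured at every iterate, I would argue the equivalence of the two algorithms by a second induction over iterations. Whenever Alg.~\ref{alg:offline} fills in an action via the switch shortcut in Line~\ref{alg:offline-switch}, that action is by (ii) a minimizer of the right-hand side of Eq.~\eqref{eq:rvia}; whenever it falls through to Line~\ref{alg:offline-regular-update}, it performs the full minimization. In either case the value $V_{\text{tmp}}(\mathbf{s})$ recorded in Line~\ref{alg:offline-value-update-tmp} equals the conventional update, and the replacement in Line~\ref{alg:offline-value-update} makes $V^{(m)}_{n+1}$ identical to the iterate produced by Eq.~\eqref{eq:rvia}. Hence the two algorithms are iterate-for-iterate identical, and the structural RVIA inherits convergence to the $\Delta^{(m)}$-optimum. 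Finite-iteration convergence of the \emph{actions} then follows from finiteness: there are only finitely many deterministic stationary policies, each suboptimal action carries a strictly positive optimality gap, and once $V^{(m)}_n$ is close enough to the relative value function these gaps exclude all suboptimal actions, so the greedy policy stabilizes at a $\Delta^{(m)}$-optimal one after finitely many steps.

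I expect the main obstacle to be step (ii) of the first induction: propagating the switch-type structure through the \emph{value iterates} rather than only through the optimal value function. The subtlety is that the argument of Theorem~\ref{theorem:optimal-switch} must be replayed for each finite-horizon iterate $V^{(m)}_n$ while simultaneously maintaining the monotonicity (i) on which it relies, and care is needed at the truncation boundary $x_i=m$, where the clipped dynamics $[\cdot]^+_m$ alter the form of the next-state term; this is precisely why the truncation is chosen with $m>N$ (cf. Remark~\ref{remark to truncation}). Verifying the RVIA convergence hypotheses (unichain, aperiodicity) and the action-gap argument for finite termination are comparatively routine.
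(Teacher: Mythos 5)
Your overall route is the same as the paper's: treat $\Delta^{(m)}$ as a finite-state, finite-action average-cost MDP, reduce Alg.~\ref{alg:offline} to the conventional RVIA in Eq.~(\ref{eq:rvia}), and invoke the convergence theorem \cite[Theorem 8.6.6]{MDP:Puterman} under the unichain hypothesis. Where you go beyond the paper is the iterate-level induction (monotonicity of $V^{(m)}_n$ in each age, hence switch-type greedy actions for every iterate $n$, hence iterate-for-iterate equality of the structural and conventional RVIA). The paper only asserts, in a footnote, that the \emph{optimal} policy of the truncated MDP is switch-type, which by itself does not justify the shortcut in Line~\ref{alg:offline-switch} at intermediate iterations; your induction is precisely what is needed to close that step, and the truncation boundary causes no trouble since $[x+2]^+_m \geq [x+1]^+_m$ preserves the monotonicity comparisons.

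There is, however, a genuine gap in the one hypothesis the paper's proof actually verifies: the unichain property. You claim it "follows from the same positive-recurrence argument used for Lemma~\ref{thm:stationary}," but that argument only exhibits a \emph{single} well-behaved policy (the max-weight-type policy $f$) and checks Sennott's conditions for the countable-state MDP; neither positive recurrence under one policy nor constancy of the minimum average cost implies that \emph{every} deterministic stationary policy of $\Delta^{(m)}$ induces a chain with a single recurrent class, which is what unichain requires and what \cite[Theorem 8.6.6]{MDP:Puterman} needs. The correct verification is short but different: under \emph{any} policy and from \emph{any} state of $\mathbf{S}^{(m)}$, with probability at least $\prod_{i=1}^N (1-p_i)^m > 0$ no packets arrive during the next $m$ slots; since actions are irrelevant when there are no arrivals, all ages clip to $m$ and all arrival indicators become $0$, so the state $(m,\cdots,m,0,\cdots,0)$ is accessible from every state under every policy, giving a single recurrent class. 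This state also has a self-loop under every policy, so the induced chains are aperiodic and your appeal to an aperiodicity transformation is unnecessary. With this substitution, your argument is complete.
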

\begin{proof}
(Sketch) According to \cite[Theorem 8.6.6]{MDP:Puterman}, we only need to verify that the truncated MDP is \textit{unichain}. Please see Appendix \ref{appendix:theorem:truncation} for details. 
\end{proof}

\begin{algorithm}[!t]
\SetCommentSty{text}
\SetAlgoLined 
\SetKwFunction{Union}{Union}\SetKwFunction{FindCompress}{FindCompress} \SetKwInOut{Input}{input}\SetKwInOut{Output}{output}

$V^{(m)}(\mathbf{s}) \leftarrow 0$ for all virtual states $\mathbf{s}\in \mathbf{S}^{(m)}$;\\

\While{$1$}{
\ForAll{$\mathbf{s} \in \mathbf{S}^{(m)}$}{
\uIf{there exists $\zeta >0$ and $i \in \{1, \cdots, N\}$ such that $d^*_{(x_i-\zeta,\mathbf{x}_{-i},\boldsymbol{\lambda})}=i$}{
$d^*_{\mathbf{s}} \leftarrow i$;\\ \label{alg:offline-switch}
}
\Else{
$d^*_{\mathbf{s}} \leftarrow \argmin_{d \in \{0, \cdots, N\}} C(\mathbf{s},d)+E[V^{(m)}(\mathbf{s}')]$;\\ \label{alg:offline-regular-update}
}
$V_{\text{tmp}}(\mathbf{s}) \leftarrow C(\mathbf{s},d^*_{\mathbf{s}})+E[V^{(m)}(\mathbf{s}')]-V^{(m)}(\mathbf{0})$;\\ \label{alg:offline-value-update-tmp}
}
$V^{(m)}(\mathbf{s}) \leftarrow V_{\text{tmp}}(\mathbf{s})$ for all $\mathbf{s} \in \mathbf{S}^{(m)}$.\\ \label{alg:offline-value-update}
}

\caption{Structural RVIA.}
\label{alg:offline}
\end{algorithm}

Based on the structural RVIA in Alg.~\ref{alg:offline}, we propose the \textit{structural MDP scheduling algorithm}: Given the actions for all state $\mathbf{s} \in \mathbf{S}^{(m)}$ from Alg.~\ref{alg:offline},  for each slot $t$ the  scheduling algorithm makes a decision according to the \textit{virtual age} $X^{(m)}_i(t)$ for all $i$,   instead of the \textit{real age} $X_i(t)$. Then, combining Theorems~\ref{theorem:finite-approximation} and \ref{theorem:truncation} yields that the proposed algorithm is asymptotically $\Delta$-optimal as $m$ approaches infinity.

\section{An index scheduling algorithm} \label{section:whittle}
By mean of the MDP techniques, we have developed the structural MDP scheduling algorithm. The scheduling algorithm not only reduces the complexity from the traditional RVIA, but also was shown to be asymptotically age-optimal. 
However,  the scheduling algorithm might not be feasible for many users; thus, a low-complexity scheduling algorithm for many users is still needed. To fill this gap, we  investigate the scheduling problem from the perspective of \textit{restless bandits} \cite{gittins2011multi}.  A restless bandit generalizes a classic bandit  by allowing the bandit to keep evolving under a \textit{passive} action, but in a distinct way from its  continuation under an \textit{active} action. 

The  restless  bandits problem, in general, is PSPACE-hard \cite{gittins2011multi}. Whittle hence investigated a relaxed version, where a constraint on the number of active bandits for each slot is replaced by the expected number.  With this relaxation, Whittle then applied a Lagrangian approach to decouple the multi-armed bandit  problem into multiple sub-problems, while proposing an index policy and a concept of \textit{indexability}. The index policy is optimal for the relaxed problem; moreover, in many practical systems, the low-complexity index policy performs remarkably well, e.g., see \cite{larranaga2015stochastic}.  

With the success of the Whittle index policy to solve the restless bandit problem, we  apply the  Whittle's approach to develop a low-complexity scheduling algorithm. However, to obtain the Whittle index in closed form and to establish the indexability can be very challenging \cite{gittins2011multi}. To address the issues, we simplify the derivation of the Whittle index by investigating structural results like Section.~\ref{section:mdp}.

\subsection{Decoupled sub-problem}
We note that each user in our scheduling problem can be viewed as a restless bandit. Then, applying the Whittle's approach, we can decouple our problem into $N$ sub-problems. Each sub-problem consists of a single user $u_i$ and adheres to the network model in Section \ref{section:system} with $N=1$, except for an additional cost $c$ for updating the user. In fact, the cost $c$ is a scalar Lagrange multiplier in the Lagrangian approach. In each decoupled sub-problem, we aim at  determining whether or not the BS  updates  the user in each slot, for striking a balance between the updating cost and the cost incurred by age.  Since each sub-problem consists of a single user only, hereafter in this section we omit the index $i$ for simplicity.

Similarly, we  cast the sub-problem into an MDP $\Omega$, which is the same as the MDP $\Delta$ in Section~\ref{section:mdp} with a single user except:
\begin{itemize}
	\item \textbf{Actions}: Let $A(t) \in \{0, 1\}$ be an action of the MDP in slot $t$ indicating the BS's decision, where $A(t)=1$ if the BS decides \textit{to update} the user and $A(t)=0$ if the BS decides \textit{to idle}. Note that the action $A(t)$ is different from the scheduling decision $D(t)$. The action $A(t)$ is used for the decoupled sub-problem.  In Section.~\ref{subsection:index}, we will use the MDP $\Omega$ to decide $D(t)$. 
	\item \textbf{Cost}: Let $C(\mathbf{S}(t), A(t))$ be an immediate cost if action  $A(t)$ is taken in slot $t$ under  state $\mathbf{S}(t)$, with the definition as follows.
\begin{align}
&C\Bigl(\textbf{S}(t)=(x, \lambda), A(t)=a\Bigr) \nonumber\\
\triangleq& (x+1- x \cdot a\cdot \lambda) +c\cdot a, \label{eq:cost}
\end{align}
where the first part $x+1- x \cdot a\cdot \lambda$ is the resulting age in the next slot and the second part is the incurred cost for updating the user. 
\end{itemize}

A \textit{policy} $\mu=\{A(0), A(1), \cdots\}$ of the MDP $\Omega$  specifies an action $A(t)$ for each slot $t$.  The \textit{average cost} $J(\pi)$ under policy $\mu$ is defined by
\begin{align*}
J(\mu)=\limsup_{T \rightarrow \infty} \frac{1}{T+1} E_{\mu} \left[ \sum_{t=0}^T C(\mathbf{S}(t), A(t))\right]. 
\end{align*}

Again, the objective of the MDP $\Omega$ is to find an $\Omega$-optimal policy defined as follows. 
\begin{define}
A policy $\mu$  of the MDP $\Omega$ is \textit{$\Omega$-optimal} if it minimizes the  average cost $J(\mu)$.
\end{define}
Traditionally, the Whittle index might be obtained by solving the optimality equation of $J(\mu)$, e.g. \cite{gittins2011multi,index:igor}. However, as  discussed, the average cost optimality equation  for the MDP $\Omega$ might not exit, and even if it exists, solving an optimality equation might be tedious. To look for a simpler way for obtaining the Whittle index, we  investigate structures of an $\Omega$-optimal policy instead, by looking at its discounted case  again. It turns out that our structural results will further simplify the derivation of the Whittle index. 

\subsection{Characterization of the $\Omega$-optimality}
First, we show that an $\Omega$-optimal policy is stationary  deterministic as follows.

\begin{theorem} \label{theorem:stationary-omgea}
There exists a  deterministic stationary  policy that is $\Omega$-optimal, independent of the initial state.
\end{theorem}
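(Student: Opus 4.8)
The plan is to transcribe the argument behind Lemma~\ref{thm:stationary} to the single-user setting, since $\Omega$ is exactly the $N=1$ specialization of $\Delta$ with the constant update penalty $c\cdot a$ absorbed into the immediate cost. First I would set up the $\alpha$-discounted version of $\Omega$, introducing $J_\alpha(\mathbf{s})=\min_\mu J_\alpha(\mathbf{s};\mu)$ and, exactly as in Proposition~\ref{lemma:optimality-eq}, the discounted cost optimality equation
\begin{align*}
J_\alpha(x,\lambda)=\min_{a\in\{0,1\}} \Bigl[(x+1-x\lambda a)+c\,a+\alpha E[J_\alpha(x',\lambda')]\Bigr],
\end{align*}
together with the value iteration $J_{\alpha,n}\to J_\alpha$. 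Because the action space is finite and the one-stage cost is non-negative, the contraction and monotone-convergence arguments of Proposition~\ref{lemma:optimality-eq} carry over unchanged, producing a deterministic stationary $\Omega_\alpha$-optimal policy for each fixed $\alpha$; the same induction used for Proposition~\ref{lemma:monotone} shows that $J_\alpha(x,\lambda)$ is non-decreasing in $x$, a fact I will need for the lower bound below.

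The substance of the proof is the vanishing-discount passage from the discounted to the average-cost problem via \cite{stationary-policy:Sennott}, which requires verifying the two conditions collected in Appendix~\ref{appendix:thm:stationary}. The finiteness condition is immediate from the age-queue intuition: the stationary policy that updates at every available packet resets the age to $1$ with probability $p$ each slot, so the age is stochastically dominated by a positive-recurrent chain and this policy has finite average cost, whence $\sup_\alpha(1-\alpha)J_\alpha(\mathbf{s})<\infty$. For the relative cost function $h_\alpha(\mathbf{s})=J_\alpha(\mathbf{s})-J_\alpha(\mathbf{0})$ I would prove a uniform (in $\alpha$) two-sided bound. The lower bound $h_\alpha(\mathbf{s})\ge -K$ follows from monotonicity in $x$ together with the fact that only the two age-one phases $\lambda\in\{0,1\}$ can undercut the reference state $\mathbf{0}$, and the gap between them is controlled by a single one-step cost independent of $\alpha$. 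The upper bound $h_\alpha(\mathbf{s})\le M(\mathbf{s})$ follows from a coupling/hitting-time estimate: starting from $(x,\lambda)$, the policy that idles until the first arrival and then updates reaches $\mathbf{0}$, and the excess discounted cost it accrues is dominated by the age accumulated over a geometric waiting time of mean $1/p$, giving a finite $M(\mathbf{s})$ whose one-step expectation is also finite.

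With both conditions established, Sennott's theorem furnishes a subsequence $\alpha_n\uparrow 1$ along which $h_{\alpha_n}$ converges to a finite limit solving the average-cost optimality inequality, and the deterministic stationary policy attaining its minimizer is $\Omega$-optimal; the optimal average cost equals the constant $J^*=\lim_{\alpha\to 1}(1-\alpha)J_\alpha(\mathbf{s})$, which is independent of the initial state $\mathbf{s}$. The hard part will be the uniform upper bound on $h_\alpha$: unlike in finite-state models one cannot appeal to boundedness of the value function, and because the immediate cost is unbounded the hitting-time estimate to the reference state must be made quantitative and, crucially, shown to hold uniformly in $\alpha$. Everything else is a faithful one-user restatement of the $N$-user machinery already developed for $\Delta$.
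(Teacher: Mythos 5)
Your proposal is correct and follows essentially the same route as the paper: cast the sub-problem in its $\alpha$-discounted form, verify Sennott's two conditions (a deterministic stationary policy with irreducible, aperiodic chain and finite average cost --- the ``update on every arrival'' policy, whose post-update age chain is geometric with mean age $1/p$ and update cost $cp$ --- together with a uniform lower bound on $h_\alpha$ obtained from monotonicity of $J_\alpha(x,\lambda)$ in $x$ and in $\lambda$), and then pass to the average-cost limit via the vanishing-discount argument. The only cosmetic difference is that you make the upper bound $h_\alpha(\mathbf{s})\le M(\mathbf{s})$ explicit through a first-passage/hitting-time estimate, whereas the paper leaves this implicit in Sennott's machinery (finite expected first-passage costs of the verification policy), so the step you flag as ``the hard part'' is in fact already handled by the cited framework.
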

\begin{proof}
Please see Appendix \ref{appendix:theorem:stationary-omgea}. 
\end{proof}

Next, we show that an $\Omega$-optimal policy is a special type of  deterministic stationary  policies.

\begin{define}
A \textit{threshold-type} policy is a special deterministic stationary   policy of the MDP $\Omega$. The action for state $(x,0)$ is to idle, for all $x$. 
Moreover, if the  action for state $(x,1)$ is to update, then the action for  state $(x+1,1)$ is to update as well. In other words, there exists a threshold $\bar{X} \in \{1,2, \cdots\}$ such that the action is to update if there is an arrival and the age is greater than or equal to $\bar{X}$; otherwise, the action is  to idle. 
\end{define}

\begin{theorem} \label{theorem:threshold}
If the update cost $c\geq 0$, then there exists a threshold-type policy that is $\Omega$-optimal. 
\end{theorem}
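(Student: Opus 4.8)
The plan is to follow the same two-stage route used for Theorem~\ref{theorem:optimal-switch}: first establish the threshold structure for the $\alpha$-discounted version of $\Omega$ through its value iteration, then transfer the structure to the average-cost problem by a limiting argument over the discount factor. For the single-user MDP $\Omega$, write $\nu_\alpha((x,\lambda);a)=C((x,\lambda),a)+\alpha E[V_\alpha(x',\lambda')]$ for the discounted state-action value, so that an $\Omega_\alpha$-optimal action minimizes $\nu_\alpha((x,\lambda);\cdot)$ over $a\in\{0,1\}$. Here $x'=1$ when $a=1$ and $\lambda=1$, and $x'=x+1$ otherwise, while $\lambda'\sim\mathrm{Bernoulli}(p)$. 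The monotonicity of $V_\alpha$ in the age, inherited from Proposition~\ref{lemma:monotone} specialized to $N=1$, will be the workhorse.

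I would split the discounted analysis into the two parts of the threshold definition. For the no-arrival states $(x,0)$, updating and idling lead to the identical next state $(x+1,\lambda')$, while updating pays the extra nonnegative cost $c$; hence $\nu_\alpha((x,0);1)-\nu_\alpha((x,0);0)=c\ge 0$, so idling is optimal at every $(x,0)$, establishing the first clause of the threshold definition. For the arrival states $(x,1)$, I would form the advantage of updating over idling,
\begin{align*}
g_\alpha(x) &\triangleq \nu_\alpha((x,1);1)-\nu_\alpha((x,1);0) \\
&= c-x+\alpha E[V_\alpha(1,\lambda')]-\alpha E[V_\alpha(x+1,\lambda')],
\end{align*}
and show that $g_\alpha$ is non-increasing in $x$. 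Indeed,
\begin{align*}
g_\alpha(x+1)-g_\alpha(x) &= -1-\alpha\bigl(E[V_\alpha(x+2,\lambda')]-E[V_\alpha(x+1,\lambda')]\bigr)\\
&\le -1<0,
\end{align*}
where the inequality uses that $V_\alpha$ is non-decreasing in the age. Consequently $\{x:g_\alpha(x)\le 0\}$ is upward closed, i.e., once updating is optimal at $(x,1)$ it remains optimal at $(x+1,1)$, which is exactly the threshold property for the discounted problem.

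Finally I would lift the structure to the average-cost criterion exactly as in the last paragraph of the proof of Theorem~\ref{theorem:optimal-switch}. The Sennott-type conditions needed for $\Omega$ have already been verified in proving Theorem~\ref{theorem:stationary-omgea} (Appendix~\ref{appendix:theorem:stationary-omgea}); invoking \cite{stationary-policy:Sennott}, I take a sequence of discount factors $\alpha_n\to 1$ and extract a subsequence $\beta_n$ along which the $\Omega_{\beta_n}$-optimal policies converge to an $\Omega$-optimal deterministic stationary policy. Since each $\Omega_{\beta_n}$-optimal policy may be taken threshold-type by the discounted analysis, and the threshold property (idle on all $(x,0)$, with an upward-closed update region on the $(x,1)$ states) is preserved under pointwise limits of policies, the limiting $\Omega$-optimal policy is threshold-type as well.

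The step I expect to be the main obstacle is the passage from the discounted to the average-cost problem: because $\Omega$ has a countably infinite state space and unbounded immediate cost, the average-cost optimality equation need not hold, so the threshold structure cannot simply be read off from a fixed-point equation and must instead be propagated through the discount-factor limit. The care lies in confirming that the threshold region survives this limit---in particular handling the possibility that the optimal threshold $\bar{X}$ diverges (the degenerate ``never update'' case, which is still threshold-type) as $\beta_n\to 1$---and in checking that the hypotheses of \cite{stationary-policy:Sennott} established for Theorem~\ref{theorem:stationary-omgea} indeed cover the cost-augmented single-user MDP for every $c\ge 0$.
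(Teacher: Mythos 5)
Your proposal is correct and follows essentially the same route as the paper's proof: idling is trivially optimal at $(x,0)$ when $c\ge 0$, the discounted advantage of updating at $(x,1)$ is shown to be non-increasing in $x$ via the monotonicity of $J_\alpha$ (the paper's Proposition~\ref{lemma:monotone} specialized to one user), and the structure is lifted to the average-cost criterion by the same Sennott-style discount-factor limit used in Theorem~\ref{theorem:optimal-switch}. Your explicit treatment of the $(x,0)$ states, the advantage-function phrasing, and the remark about the possibly divergent threshold are slightly more detailed than the paper's exposition but substantively identical.
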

\begin{proof}
It is obvious that an optimal action for state $(x,0)$ is to idle if $c \geq 0$. To establish the optimality of the threshold structure for state $(x,1)$, we need  the  \textit{discounted cost  optimality equation} for $J_{\alpha}(\mathbf{s})$, similar to Proposition~\ref{lemma:optimality-eq}: 
\begin{align*}
J_{\alpha}(\mathbf{s})=\min_{a \in \{0,1\}} C(\mathbf{s}, a)+\alpha E[J_{\alpha}(\mathbf{s}')].
\end{align*}

Similar to the proof of Theorem~\ref{theorem:optimal-switch}, we can focus on the discounted cost case and show that an $\Omega_{\alpha}$-optimal policy is the threshold type. Let $\mathscr{J}_{\alpha}(\mathbf{s};a)=C(\mathbf{s}; a)+\alpha E[J_{\alpha}(\mathbf{s}')]$. Then, $J_{\alpha}(\mathbf{s})=\min_{a \in \{0,1\}} \mathscr{J}_{\alpha}(\mathbf{s};a)$. Moreover, an $\Omega_{\alpha}$-optimal action for state $\mathbf{s}$ is $\argmin_{a \in \{0,1\}} \mathscr{J}_{\alpha}(\mathbf{s};a)$. Suppose that an $\Omega_{\alpha}$-optimal action for state $(x,1)$ is to update, i.e., 
\begin{align*}
\mathscr{J}_{\alpha}(x,1;1)-\mathscr{J}_{\alpha}(x,1;0) \leq 0.
\end{align*}
Then, an $\Omega_{\alpha}$-optimal action for state $(x+1,1)$ is still to update since
\begin{align*}
&\mathscr{J}_{\alpha}(x+1,1;1)-\mathscr{J}_{\alpha}(x+1,1;0)\\
=&\left(1+c+\alpha E[J_{\alpha}(1,\lambda')] \right) -\left(x+2+\alpha E[J_{\alpha}(x+2,\lambda')] \right)\\
\mathop{\leq}^{(a)} & \left(1+c+\alpha E[J_{\alpha}(1,\lambda')] \right) -\left(x+1+\alpha E[J_{\alpha}(x+1,\lambda')] \right)\\
=&J_{\alpha}(x,1;1)-J_{\alpha}(x,1;0) \leq 0,
\end{align*}
where  (a) results from the non-decreasing function of $J_{\alpha}(x,\lambda)$ in $x$ given $\lambda$ (similar to Proposition~\ref{lemma:monotone}). Hence, an $\Omega_{\alpha}$-optimal policy is  threshold-type.  
\end{proof}

Thus far, we have successfully identify the threshold structure of an $\Omega$-optimal policy. The MDP $\Omega$  then can be reduced to a two-dimensional discrete-time Markov chain (DTMC) by  applying a threshold-type policy. To find an optimal threshold for minimizing the average cost,  in the next lemma we explicitly derive the average cost for a threshold-type policy. 
\begin{figure}[!t]
\centering
\includegraphics[width=.35\textwidth]{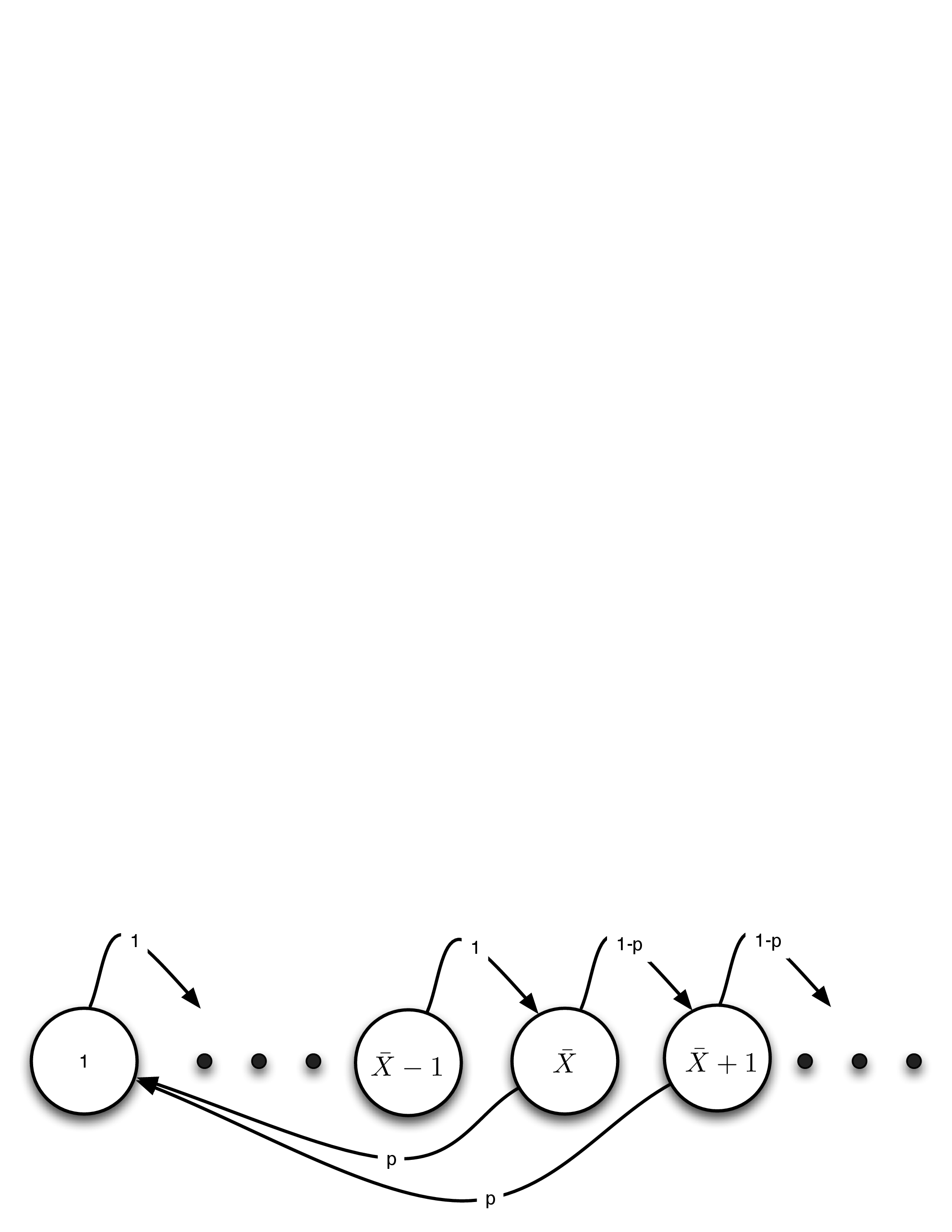}
\caption{The post-action age $\tilde{X}(t)$ under the threshold-type policy forms a DTMC.}
\label{fig:post-age}
\end{figure}
\begin{lemma} \label{lemma:threshold-cost}
Given the threshold-type policy $\mu$ with the threshold $\bar{X}\in \{1, 2, \cdots\}$, then 
 the average  cost $J(\mu)$, denoted by $\mathscr{C}(\bar{X})$, under the policy is 
\begin{align}
\mathscr{C}(\bar{X})=\frac{\frac{\bar{X}^2}{2}+(\frac{1}{p}-\frac{1}{2})\bar{X}+\frac{1}{p^2}-\frac{1}{p}+c}{\bar{X}+\frac{1-p}{p}}. \label{eq:threshold-cost}
\end{align}
\end{lemma}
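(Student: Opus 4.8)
The plan is to exploit the regenerative structure induced by a threshold-type policy and to invoke the renewal--reward theorem. Under the threshold policy $\mu$ with threshold $\bar{X}$, the only way the age is reset to $1$ is through an update, which occurs exactly when the age has reached at least $\bar{X}$ and a packet arrives. Hence the post-action age $\tilde{X}(t)=X(t+1)$ (which equals $1$ right after an update and $X(t)+1$ otherwise) is a time-homogeneous DTMC on $\{1,2,\dots\}$ whose only entry into state $1$ is via an update, so state $1$ serves as a regeneration point (cf.\ Fig.~\ref{fig:post-age}). First I would verify that this chain is positive recurrent: from any age $\ge \bar{X}$ a reset occurs with probability $p>0$ in each slot, so the return time to state $1$ has finite mean, the chain has a single positive-recurrent class (the transient initial ages do not affect the long-run average), and the $\limsup$ in $J(\mu)$ is in fact a limit equal to the renewal--reward ratio $E[\text{cost per cycle}]/E[\text{cycle length}]$.

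Next I would describe one regeneration cycle, i.e., the slots between two consecutive updates. Starting just after an update, the age deterministically takes the values $1,2,\dots,\bar{X}-1$ (these $\bar{X}-1$ slots never trigger an update), and then visits ages $\bar{X},\bar{X}+1,\dots$ until the first arrival. The number $G$ of slots spent at ages $\ge \bar{X}$ before the update is geometric with parameter $p$ on $\{1,2,\dots\}$, so $E[G]=1/p$. The cycle length is therefore $L=(\bar{X}-1)+G$, giving $E[L]=\bar{X}-1+\tfrac{1}{p}=\bar{X}+\tfrac{1-p}{p}$, which is precisely the denominator in Eq.~(\ref{eq:threshold-cost}).

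Then I would compute the expected cost accumulated over one cycle. Because $C(\mathbf{S}(t),A(t))=X(t+1)+c\,A(t)$, the cost charged in a cycle equals the sum of the ages visited plus the single update cost $c$. Summing the deterministic part $\sum_{x=1}^{\bar{X}-1}x$ and the random part $\sum_{k=0}^{G-1}(\bar{X}+k)=G\bar{X}+\tfrac{G(G-1)}{2}$, and taking expectations with $E[G]=1/p$ and $E[G(G-1)]=2(1-p)/p^2$ (obtained from the geometric second moment), yields
\begin{align*}
E[\text{cost per cycle}]=\frac{\bar{X}^2}{2}+\Bigl(\frac{1}{p}-\frac{1}{2}\Bigr)\bar{X}+\frac{1}{p^2}-\frac{1}{p}+c,
\end{align*}
which is exactly the numerator of Eq.~(\ref{eq:threshold-cost}). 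Dividing by $E[L]$ and applying the renewal--reward theorem gives the claimed $\mathscr{C}(\bar{X})$.

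The main obstacle I anticipate is not the arithmetic but the careful handling of two bookkeeping points. First, one must confirm that the per-slot cost indexes the next-slot age $X(t+1)$ rather than $X(t)$, so that the accumulated reward over a cycle is genuinely the sum of ages visited in that cycle. Second, one must rigorously justify that the $\limsup$-average cost equals the renewal--reward ratio despite the countably infinite state space and unbounded per-slot cost; this rests on the finite mean cycle length and finite mean per-cycle cost established above, together with the positive recurrence of the post-action DTMC, which lets the contributions of the transient initial ages vanish in the long-run average.
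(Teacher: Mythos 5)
Your proof is correct, and it reaches Eq.~(\ref{eq:threshold-cost}) by a genuinely different route from the paper's. Both arguments rest on the same key observation---that under the threshold policy the post-action age $\tilde{X}(t)=X(t+1)$ forms a one-dimensional DTMC (Fig.~\ref{fig:post-age}) and that the per-slot cost $C(\mathbf{S}(t),A(t))=\tilde{X}(t)+c\,A(t)$ can be attributed entirely to post-action states---but they diverge in how the long-run average is then computed. The paper solves the balance equations of this DTMC explicitly, obtaining the stationary distribution $\xi_i=1/\bigl(\bar{X}+\tfrac{1-p}{p}\bigr)$ for $i\le\bar{X}$ and $\xi_i=(1-p)^{i-\bar{X}}/\bigl(\bar{X}+\tfrac{1-p}{p}\bigr)$ for $i>\bar{X}$, and evaluates the stationary expected cost $(1+c)\xi_1+\sum_{i\ge 2}i\xi_i$. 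You instead treat each update as a regeneration epoch and apply the renewal--reward theorem: the cycle length is $(\bar{X}-1)+G$ with $G$ geometric of parameter $p$, the cycle cost is $c$ plus the sum of the ages $1,\dots,\bar{X}+G-1$, and the formula falls out of $E[G]=1/p$ and $E[G(G-1)]=2(1-p)/p^2$. Your route buys two things: it avoids solving for the stationary distribution altogether, reducing everything to two moments of a geometric random variable, and your explicit justification that the $\limsup$-average equals the renewal--reward ratio (finite mean cycle length and finite mean cycle cost, despite the unbounded per-slot cost and the countable state space) is in fact more careful than the paper, which passes from the stationary distribution to the average cost without comment. What the paper's route buys in exchange is the stationary distribution itself, which is slightly stronger information about the chain. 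Your two bookkeeping points---that the cost charges the next-slot age rather than the current one, and that transient initial ages do not affect the long-run average---are both identified and handled correctly.
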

\begin{proof}
Let $\tilde{X}(t)$ be the  age \textit{after} an action in slot $t$; precisely, if $\mathbf{S}(t)=(x,\lambda)$ and $A(t)=a$, then $\tilde{X}(t)=x+1-x\cdot a\cdot \lambda$. Note that $\tilde{X}(t)$, called  \textit{post-action age} (similar to the post-decsion state \cite{AMDP:Powell,online-engery:borkar}), is  different from the \textit{pre-action age} $X(t)$.  Then, the post-action age $\tilde{X}(t)$ by the threshold-type policy forms an one-dimensional DTMC in Fig. \ref{fig:post-age}, with the transition probabilities being
\begin{align*}
&P[\tilde{X}(t+1)=i+1|\tilde{X}(t)=i]=1, \text{\,\,for\,\,}i=1, \cdots, \bar{X}-1;  \\
&P[\tilde{X}(t+1)=i+1|\tilde{X}(t)=i]=1-p, \text{\,\,for\,\,}i=\bar{X}, \cdots;\\  
&P[\tilde{X}(t+1)=1|\tilde{X}(t)=i]=p, \text{\,\,for\,\,}i=\bar{X}, \cdots.  
\end{align*}

To calculate the average cost of the policy, we  associate each state in the DTMC with a cost. The DTMC incurs the cost of $c+1$ in slot $t$ when the post-action age in slot $t$ is $\tilde{X}(t)=1$. That is because the post-action age $\tilde{X}(t)=1$ implies that the BS updates the user. In addition, the DTMC incurs the age cost of $y$ in slot $t$ when the post-action age is $\tilde{X}(t)=y \neq 1$. 

The steady-state distribution $\boldsymbol{\xi}=(\xi_1, \xi_2, \cdots)$ of the DTMC can be solved as
\begin{align*}
\xi_i=\left\{
\begin{array}{ll}
\frac{1}{\bar{X}+\frac{1-p}{p}} & \text{if $i=1, \cdots, \bar{X}$;}\\
\frac{1}{\bar{X}+\frac{1-p}{p}}(1-p)^{i-\bar{X}} & \text{if $i=\bar{X}+1, \cdots$}.
\end{array}
\right.
\end{align*}
Therefore, the average cost of the DTMC is 
\begin{align*}
(1+c)\xi_1 +\sum_{i=2}^{\infty} i\xi_i=\frac{\frac{\bar{X}^2}{2}+(\frac{1}{p}-\frac{1}{2})\bar{X}+\frac{1}{p^2}-\frac{1}{p}+c}{\bar{X}+\frac{1-p}{p}}.
\end{align*}
\end{proof}
We remark that the post-action age  introduced in the above proof are beneficial in many aspects:
\begin{itemize}
	\item The post-action age can form an one-dimensional DTMC, instead of the original two-dimensional state $\mathbf{S}(t)$. 
	\item We cannot associate each pre-action age with a fixed cost, since the cost in Eq. (\ref{eq:cost}) depends on not only state but also action. Instead, the cost for each post-action age is determined by its age only. 
	\item The post-action age will  facilitate the online algorithm design in Section~\ref{section:online}.
\end{itemize}


%
%
\subsection{Derivation of the Whittle index}
Now, we are ready to define and derive the Whittle index as follows.  
\begin{define}
We define the Whittle index $I(\mathbf{s})$ by  the  cost $c$ that makes  both actions, to update and to idle, for state  $\mathbf{s}$ equally desirable. 
\end{define}

In the next theorem, we will obtain a very simple expression for the Whittle index  by combining Theorem~\ref{theorem:threshold} and Lemma~\ref{lemma:threshold-cost}. 
\begin{theorem} \label{theorem:whittle}
The Whittle index of the  sub-problem for state $(x,\lambda)$ is 
\begin{align}
I(x,\lambda)=\left\{
\begin{array}{ll}
0 & \text{if $\lambda=0$;}\\
\frac{x^2}{2}-\frac{x}{2}+\frac{x}{p} & \text{if $\lambda=1$.}
\end{array}
\right.\label{eq:index}
\end{align}
\end{theorem}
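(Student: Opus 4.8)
The plan is to avoid solving any average-cost optimality equation for $\Omega$ and instead reduce the index computation to elementary algebra, by combining the threshold structure of Theorem~\ref{theorem:threshold} with the closed-form cost of a threshold policy in Lemma~\ref{lemma:threshold-cost}. I would first dispose of the trivial case $\lambda=0$. From the cost in Eq.~(\ref{eq:cost}), when $\lambda=0$ the age term $x+1-x\cdot a\cdot\lambda$ equals $x+1$ irrespective of the action $a$, so the post-action age and hence the entire future evolution are identical whether the BS updates or idles; the two actions differ only by the additive penalty $c\cdot a$. Thus idling is weakly preferred, and the two actions are equally desirable exactly when $c=0$, giving $I(x,0)=0$ with no further work.

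For the case $\lambda=1$ the key observation is that, by the threshold structure, making the two actions equally desirable at state $(x,1)$ is the same as making the two \emph{adjacent} threshold policies equally costly: the policy with threshold $\bar{X}=x$ updates at $(x,1)$, while the policy with threshold $\bar{X}=x+1$ idles at $(x,1)$, and these two policies differ only in the action taken at age $x$ upon an arrival. Hence I would define $I(x,1)$ as the value of $c$ solving $\mathscr{C}(x)=\mathscr{C}(x+1)$, where $\mathscr{C}(\cdot)$ is given by Eq.~(\ref{eq:threshold-cost}). Writing $\mathscr{C}(\bar{X})=(N(\bar{X})+c)/g(\bar{X})$, with $N(\bar{X})$ the $c$-free part of the numerator and $g(\bar{X})=\bar{X}+\frac{1-p}{p}$, the penalty $c$ enters only additively and $g(x+1)-g(x)=1$, so the indifference equation is \emph{linear} in $c$ and solves to $c=N(x+1)\,g(x)-N(x)\,g(x+1)$. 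Expanding this difference is the only computation: the cubic and constant terms cancel, and the remaining quadratic collapses to $\frac{x^2}{2}-\frac{x}{2}+\frac{x}{p}$, matching Eq.~(\ref{eq:index}).

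The step that requires the most care, and which I expect to be the main obstacle, is justifying that the above adjacent-threshold comparison legitimately computes the Whittle index, i.e.\ verifying \emph{indexability}. Concretely, I would show that the optimal threshold $\bar{X}^{\ast}(c)=\argmin_{\bar{X}}\mathscr{C}(\bar{X})$ is non-decreasing in $c$, so that the set of states at which idling is optimal grows monotonically as $c$ increases. This follows from the structure of $\mathscr{C}$ in Eq.~(\ref{eq:threshold-cost}): viewed as a function of $c$ for each fixed $\bar{X}$, it is an affine line with slope $\frac{\partial \mathscr{C}}{\partial c}=\xi_1=1/g(\bar{X})$, which is strictly decreasing in $\bar{X}$; the minimizer of a lower envelope of lines with decreasing slopes is non-decreasing in $c$. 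This monotonicity guarantees that the index is well defined and that the crossover from ``update'' to ``idle'' at age $x$ occurs precisely where $\bar{X}=x$ and $\bar{X}=x+1$ tie, closing the argument. As a consistency check, the resulting $I(x,1)=\frac{x^2}{2}-\frac{x}{2}+\frac{x}{p}$ is indeed increasing in $x$, as indexability demands.
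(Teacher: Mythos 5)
Your route is essentially the paper's: handle $\lambda=0$ trivially, then combine the threshold optimality of Theorem~\ref{theorem:threshold} with the closed-form cost $\mathscr{C}(\bar{X})$ of Lemma~\ref{lemma:threshold-cost} (Eq.~(\ref{eq:threshold-cost})), and obtain the index by solving $\mathscr{C}(x)=\mathscr{C}(x+1)$ for $c$, which is linear in $c$ since the cost enters additively and $g(x+1)-g(x)=1$. That computation is correct and yields Eq.~(\ref{eq:index}). Your lower-envelope observation (the slope $\partial\mathscr{C}/\partial c=\xi_1=1/g(\bar{X})$ decreases in $\bar{X}$, so the minimizing threshold is non-decreasing in $c$) is also correct, and is in fact a cleaner route to the monotonicity that the paper proves separately (Lemma~\ref{lemma:optimal-threshold}) via a discounted-cost argument.

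However, there is a genuine gap at the step you yourself identify as the crux. Monotonicity of the optimal threshold in $c$ does \emph{not} imply that the crossover from ``update'' to ``idle'' at age $x$ happens where thresholds $x$ and $x+1$ tie. A priori the optimal threshold could jump, say from $x-1$ directly to $x+2$, as $c$ crosses some value $c^{\ast}$; at that $c^{\ast}$ both actions at $(x,1)$ are equally desirable (update is taken by the optimal threshold-$(x-1)$ policy, idle by the optimal threshold-$(x+2)$ policy), yet $c^{\ast}$ solves $\mathscr{C}(x-1)=\mathscr{C}(x+2)$, not $\mathscr{C}(x)=\mathscr{C}(x+1)$. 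Put differently, equality of the costs of the two adjacent threshold policies is meaningful only if those two policies are \emph{globally} optimal at that $c$, which your argument never establishes. The paper closes exactly this hole by noting that $\mathscr{C}(\cdot)$, viewed as a function of a real-valued threshold, is strictly convex on $\{x\geq 1\}$, so the optimal integer threshold is $\lfloor x^{\ast}\rfloor$ or $\lceil x^{\ast}\rceil$ and an adjacent tie is automatically a global optimum. Alternatively, your closing ``consistency check'' can be promoted into the missing argument: since the computed tie costs $c_x=\frac{x^2}{2}-\frac{x}{2}+\frac{x}{p}$ are strictly increasing in $x$, and since of two affine functions of $c$ the one with larger slope is smaller before their crossing, a chain of pairwise comparisons ($\ell_k<\ell_{k+1}<\cdots$ for $c<c_k$, and $\ell_k<\ell_{k-1}<\cdots$ for $c>c_{k-1}$) shows threshold $k$ is the unique global minimizer on $(c_{k-1},c_k)$ and exactly $\{x,x+1\}$ tie at $c_x$. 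As written, though, that fact is offered as a sanity check rather than as justification, so the proposal is incomplete at its key step.
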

\begin{proof}
It is obvious that the Whittle index for state $(x,0)$ is $I(x,0)=0$ as  both actions result in the same immediate cost and the same age of next slot if $c=0$. 

Let $g(x)=\mathscr{C}(x)$ in Eq. (\ref{eq:threshold-cost})  for the domain of $\{x\in \mathbb{R}: x\geq 1\}$. Note that $g(x)$ is strictly convex in the domain. Let $x^*$ be the minimizer of $g(x)$. Then, an optimal threshold for minimizing the average cost $\mathscr{C}(\bar{X})$ is either $\lfloor x^* \rfloor$ or $\lceil x^* \rceil$: the optimal threshold is $\bar{X}^*=\lfloor x^* \rfloor$ if $\mathscr{C}(\lfloor x^* \rfloor) <  \mathscr{C}(\lceil x^* \rceil)$ and $\bar{X}^*=\lceil x^* \rceil$  if $\mathscr{C}(\lceil x^* \rceil) < \mathscr{C}(\lfloor x^* \rfloor)$. If there is a tie, both choices are optimal, i.e, equally desirable. 

Hence,  both actions for state $(x,1)$ are equally desirable if and only if the age $x$ satisfies
\begin{align}
\mathscr{C}(x)=\mathscr{C}(x+1), \label{eq:equal-desirable}
\end{align}
i.e., $x= \lfloor x^* \rfloor$ and both thresholds of $x$ and $x+1$ are optimal. 
By solving Eq.~(\ref{eq:equal-desirable}), we  obtain the cost, as stated in the theorem, to make both actions equally desirable. 
\end{proof}

According to Theorem \ref{theorem:whittle}, both actions might have a tie. If there is a tie, we break the tie in favor of idling. Then, we can explicitly express the optimal threshold  in the next theorem. 

\begin{lemma} \label{lemma:optimal-threshold}
The optimal threshold for minimizing the average cost $\mathscr{C}(\bar{X})$ is $x$ if the cost $c$ satisfies $I(x-1,1) \leq  c < I(x,1)$, for all $x=1, 2, \cdots$.
\end{lemma}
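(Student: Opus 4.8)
The plan is to reduce the search for the optimal integer threshold to a sequence of pairwise comparisons between adjacent thresholds $x$ and $x+1$, and to show that each such comparison is governed by a single crossing located exactly at the Whittle index $I(x,1)$. To expose the dependence on the update cost, I would rewrite Eq.~(\ref{eq:threshold-cost}) as $\mathscr{C}(\bar{X};c)=\frac{Q(\bar{X})+c}{\bar{X}+\frac{1-p}{p}}$, where $Q(\bar{X})=\frac{\bar{X}^2}{2}+(\frac{1}{p}-\frac{1}{2})\bar{X}+\frac{1}{p^2}-\frac{1}{p}$ collects the $c$-free terms. The key observation is that $c$ enters the numerator affinely and the denominator not at all, so comparing $\mathscr{C}(x;c)$ with $\mathscr{C}(x+1;c)$ becomes, after clearing the two positive denominators, a single affine inequality in $c$.

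First I would carry out this cross-multiplication. Since $\bar{X}+\frac{1-p}{p}$ is strictly positive for every threshold, $\mathscr{C}(x;c)\le \mathscr{C}(x+1;c)$ is equivalent to a linear inequality of the form $c\le R(x)$ for an explicit $R(x)$ depending only on $p$ and $x$. The value $R(x)$ is precisely the cost at which the two adjacent thresholds tie, i.e. the solution of $\mathscr{C}(x)=\mathscr{C}(x+1)$; by the derivation of Theorem~\ref{theorem:whittle} this solution is exactly $I(x,1)$ in Eq.~(\ref{eq:index}). Hence I obtain the clean single-crossing statement $\mathscr{C}(x;c)\le \mathscr{C}(x+1;c)\iff c\le I(x,1)$, with the two equalities holding simultaneously.

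Next I would record that $I(x,1)$ is strictly increasing in the integer $x\ge 0$, since $I(x,1)-I(x-1,1)=(x-1)+\frac1p>0$, and that $I(0,1)=0$; consequently the half-open intervals $[I(x-1,1),I(x,1))$ for $x=1,2,\dots$ partition $[0,\infty)$ and cover every admissible cost $c\ge 0$ from Theorem~\ref{theorem:threshold}. Combining this monotonicity with the single-crossing relation shows that for $c\in[I(x-1,1),I(x,1))$ the sequence $\mathscr{C}(1;c),\mathscr{C}(2;c),\dots$ is non-increasing up to $\bar{X}=x$ and non-decreasing afterwards: indeed $\mathscr{C}(y;c)\ge\mathscr{C}(y+1;c)$ for every $y\le x-1$ because $c\ge I(x-1,1)\ge I(y,1)$, while $\mathscr{C}(y;c)\le\mathscr{C}(y+1;c)$ for every $y\ge x$ because $c<I(x,1)\le I(y,1)$. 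The strict convexity of $g$ established in the proof of Theorem~\ref{theorem:whittle} rules out spurious flat stretches, so $x$ is the minimizer on this interval.

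Finally I would dispose of the boundary cases using the stated tie-breaking rule. At $c=I(x-1,1)$ the thresholds $x-1$ and $x$ tie, and breaking in favor of idling selects the larger threshold $x$; at $c=I(x,1)$ the thresholds $x$ and $x+1$ tie, and the same rule selects $x+1$. This is exactly what makes the optimal-threshold interval left-closed and right-open, yielding $\bar{X}^{\ast}=x$ precisely when $I(x-1,1)\le c<I(x,1)$. I expect the only delicate point to be bookkeeping the tie-breaking consistently at both endpoints; the cross-multiplication is routine once one notices the affine-in-$c$ structure, and the heavier ingredients, namely strict convexity and the closed form of $I(x,1)$, are already supplied by Theorem~\ref{theorem:whittle}.
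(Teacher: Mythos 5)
Your proof is correct, but it takes a genuinely different route from the paper's. The paper argues inside the MDP: it notes that, by the definition of $I(x,1)$ and the idle-favoring tie-break, the optimal threshold at $c=I(x,1)$ is $x+1$, and then shows the optimal threshold is monotone non-decreasing in $c$ via a discounted-cost argument (if idling is $\Omega_{\alpha}$-optimal at state $(x,1)$ under cost $c_1$, it remains optimal under any $c_2\ge c_1$), from which the lemma follows. You instead bypass the value functions entirely and work with the closed-form average cost of Lemma \ref{lemma:threshold-cost}: the affine dependence of the numerator on $c$ turns each adjacent-threshold comparison into the single-crossing equivalence $\mathscr{C}(x;c)\le\mathscr{C}(x+1;c)\iff c\le I(x,1)$, and the strict monotonicity $I(x,1)-I(x-1,1)=(x-1)+\tfrac{1}{p}>0$ then makes $\bar X\mapsto\mathscr{C}(\bar X;c)$ unimodal with minimizer $x$ on $[I(x-1,1),I(x,1))$. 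Your version is more elementary and arguably more airtight: the paper's monotonicity step reuses the same $J_\alpha$ on both sides of its inequality even though the value function itself depends on $c$, a subtlety your purely algebraic argument never encounters; you also get uniqueness of the minimizer away from tie points for free. What the paper's approach buys is generality and brevity: the coupling-style monotonicity argument does not require a closed-form average cost, so it would survive in models where Lemma \ref{lemma:threshold-cost} has no analogue. One cosmetic remark: your appeal to strict convexity of $g$ to rule out flat stretches is unnecessary, since your strict inequality $c<I(y,1)$ for $y\ge x$ already gives strictness to the right of the minimizer, and equality to the left can occur only at $y=x-1$ when $c=I(x-1,1)$, which your tie-breaking step already handles.
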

\begin{proof}
Please see Appendix~\ref{appendix:lemma:optimal-threshold}.
\end{proof}

Next, according to \cite{whittle}, we have to demonstrate the \textit{indexability}  such that the Whittle index is feasible.  
\begin{define}
For a given cost $c$, let $\mathbf{S}(c)$ be the set of states such that the optimal actions for the states are to idle. The sub-problem is \textit{indexable} if  the set $\mathbf{S}(c)$ monotonically increases from the empty set to the entire state space, as $c$ increases from $-\infty$ to $\infty$. 
\end{define}

\begin{theorem}
The sub-problem is indexable. 
\end{theorem}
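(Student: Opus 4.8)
The plan is to reduce indexability to a statement about each state in isolation. Since $\mathbf{S}(c)$ is the set of states at which idling is optimal, it suffices to show that for every fixed state $\mathbf{s}$ the set of cost values $c$ for which idling is optimal is an \emph{upward-closed} half-line $[c_{\mathbf{s}}, \infty)$ with a finite left endpoint, and moreover that $c_{\mathbf{s}}=I(\mathbf{s})$. Once each state's idle region in $c$ is such a half-line, $\mathbf{S}(c)=\{\mathbf{s}:c\geq I(\mathbf{s})\}$ is automatically non-decreasing in $c$ in the sense of set inclusion, and letting $c\to-\infty$ and $c\to+\infty$ recovers the empty set and the whole state space, respectively.

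First I would dispose of the states $(x,0)$ with no arrival. Here updating cannot reduce the age, since the term $x\cdot a\cdot\lambda$ vanishes; comparing the immediate costs in Eq.~(\ref{eq:cost}) then shows that idling is optimal if and only if $c\geq 0$. Thus the left endpoint is $I(x,0)=0$, matching Theorem~\ref{theorem:whittle}.

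Next, for the states $(x,1)$, I would invoke the threshold structure. For $c\geq 0$, Theorem~\ref{theorem:threshold} guarantees an $\Omega$-optimal threshold policy, and Lemma~\ref{lemma:optimal-threshold} identifies the optimal threshold $\bar{X}^*(c)$ as the unique $x'$ with $I(x'-1,1)\leq c<I(x',1)$. Idling is optimal at $(x,1)$ precisely when the age lies strictly below the threshold, $x<\bar{X}^*(c)$; since $I(\cdot,1)$ is strictly increasing in its first argument (Theorem~\ref{theorem:whittle}), this is equivalent to $c\geq I(x,1)$, with the tie at $c=I(x,1)$ resolved toward idling so that the endpoint is included. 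For $c<0$ the update cost is a reward, so updating strictly dominates idling at every $(x,1)$ state and the idle region is empty, consistent with $I(x,1)\geq I(1,1)=1/p>0>c$. Hence, for all $c$, we have $(x,1)\in\mathbf{S}(c)$ if and only if $c\geq I(x,1)$.

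Combining the two cases gives $\mathbf{S}(c)=\{(x,0):c\geq 0\}\cup\{(x,1):c\geq I(x,1)\}$, which is monotonically non-decreasing in $c$; it is empty as $c\to-\infty$ because every left endpoint $0$ and $I(x,1)$ is finite, and it equals the full state space as $c\to+\infty$. This establishes indexability. I expect the main obstacle to be the $(x,1)$ case: one must argue that the optimal-threshold characterization of Lemma~\ref{lemma:optimal-threshold}, derived for $c\geq 0$, together with the elementary ``always update'' behavior for $c<0$, glue into a single clean half-line $[I(x,1),\infty)$ for every age $x$. The strict convexity of $\mathscr{C}$ underlying Lemma~\ref{lemma:optimal-threshold} and the strict monotonicity of $I(x,1)$ in $x$ are precisely what make the thresholds nest without gaps or overlaps.
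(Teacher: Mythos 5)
Your proof is correct and takes essentially the same route as the paper's: both split on the sign of $c$, dispose of the $(x,0)$ states by comparing immediate costs, and use the threshold-policy optimality together with Lemma~\ref{lemma:optimal-threshold} to conclude that the idle set grows monotonically from $\emptyset$ to the whole state space as $c$ increases. Your explicit per-state half-line characterization $\mathbf{S}(c)=\{\mathbf{s}: c\geq I(\mathbf{s})\}$ is simply a more detailed rendering of the paper's observation that the optimal threshold increases to infinity with $c$.
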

\begin{proof}
If $c<0$, the optimal action for every state is to update; as such, $\mathbf{S}(0)=\emptyset$. If $c\geq 0$, then $\mathbf{S}(c)$ is composed of the set $\{\mathbf{s}=(x,0): x=1, 2, \cdots\}$  and a set of $(x,1)$ for some $x$'s. According to Lemma \ref{lemma:optimal-threshold}, the optimal threshold monotonically increases to infinity as $c$ increases, and hence the set $\mathbf{S}(c)$ monotonically increases to the entire state space. 
\end{proof}

\subsection{Index scheduling algorithm} \label{subsection:index}

Now, we are ready to propose a low-complexity \textit{index scheduling algorithm} based on the Whittle index.   For each slot $t$, the BS observes  age $X_i(t)$ and arrival indicator $\Lambda_i(t)$ for every user $u_i$; then, updates user $u_i$ with the highest value of the Whittle index $I(X_i(t),\Lambda_i(t))$, i.e., $D(t)=\argmax_{i=1, \cdots, N} I(X_i(t),\Lambda_i(t))$. We can think of the index $I(X_i(t),\Lambda_i(t))$ as a \textit{value} of updating  user $u_i$. The intuition of the index scheduling algorithm is that the BS intends to send the most valuable packet. 

The optimality of the index scheduling algorithm for the relaxed version is known \cite{gittins2011multi}. Next,  we show that the proposed index scheduling algorithm is age-optimal for the original  problem (without relaxation), when the packet arrivals for all users are \textit{stochastically identical}. 

\begin{lemma} \label{lemma:index-optimal}
If the arrival rates of all information sources are the same, i.e., $p_i=p_j$ for all $i \neq j$, then the index scheduling algorithm is age-optimal.
\end{lemma}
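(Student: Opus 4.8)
The plan is to show that, under equal arrival rates, the index scheduling algorithm coincides with the greedy rule ``serve the oldest user that currently holds a fresh arrival,'' and then to prove that this greedy rule minimizes the total age on \emph{every} sample path via a coupling together with a sorted-vector domination argument. First I would specialize the index: when $p_i=p$ for all $i$, the index in Eq.~(\ref{eq:index}) becomes the common function $I(x,1)=\frac{x^2}{2}-\frac{x}{2}+\frac{x}{p}$, which is strictly increasing and strictly positive on $\{x\ge 1\}$, while $I(x,0)=0$. Hence $\argmax_i I(X_i(t),\Lambda_i(t))$ always selects a user with $\Lambda_i(t)=1$ whenever one exists, and among these the one with the largest age; since states in $\mathbf{X}$ have distinct ages this choice is unique. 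Thus the index algorithm is exactly the greedy rule $\pi^{g}$ that serves the oldest user holding an arrival (and idles otherwise, since serving a user with $\Lambda_i=0$ does not reset its age by Eq.~(\ref{eq:age-dynamic})).

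Next I would fix an arbitrary policy $\pi$ and couple $\pi^{g}$ and $\pi$ on one probability space, both started from the common initial state $\mathbf{X}(0)\in\mathbf{X}$, which is admissible by Lemma~\ref{thm:stationary} (that lemma also guarantees the average age is independent of the initial state). The crucial device is to couple arrivals \emph{by age-rank} rather than by user identity: at each slot, sort each system's ages increasingly and let the rank-$k$ user in \emph{both} systems receive the same Bernoulli$(p)$ arrival indicator, using fresh indicators drawn i.i.d.\ across ranks and slots. Because all users share the common rate $p$ and arrivals are independent of the current ages, this is a valid coupling: each marginal process keeps its correct i.i.d.\ per-user arrival law, since a permutation of i.i.d.\ indicators with a common rate is again i.i.d. This is precisely the step where the equal-rate assumption is indispensable.

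Then I would prove, by induction on $t$, the invariant that the increasingly sorted age vector of $\pi^{g}$ is dominated componentwise by that of $\pi$: writing $a_1\le\cdots\le a_N$ for $\pi^{g}$ and $b_1\le\cdots\le b_N$ for $\pi$, we have $a_k\le b_k$ for all $k$. The base case holds with equality. For the step, let $R$ be the common set of ranks carrying an arrival; then $\pi^{g}$ resets rank $r^\star=\max R$, while $\pi$ resets some rank $s\in R$ with $s\le r^\star$, or resets nothing. In each case the reset replaces one age by $1$ and prepends it, shifting the smaller positions up by one, so I would write out the two new sorted vectors $\tilde a$ and $\tilde b$ explicitly and check $\tilde a_k\le\tilde b_k$ position by position, using only the hypothesis $a_k\le b_k$, sortedness $a_{k-1}\le a_k$, and $s\le r^\star$. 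Distinctness of ages in $\mathbf{X}$, preserved by the dynamics since at most one user is reset to $1$ per slot, keeps the ranks and the greedy choice unambiguous throughout.

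Finally, componentwise domination gives $\sum_i X_i^{g}(t)\le\sum_i X_i^{\pi}(t)$ on every coupled path and every $t$; taking expectations, then Cesàro averages and the $\limsup$, yields that the long-run average age under $\pi^{g}$ is at most that under any $\pi$, so the index algorithm is age-optimal. I expect the main obstacle to be the coupling itself: domination must be maintained even though $\pi^{g}$ and $\pi$ assign ages to users differently, which forces serving arrivals \emph{by age-rank}, and the justification that this preserves each per-user arrival law rests entirely on the equal-rate symmetry (with unequal rates the rank-coupling would distort the marginals and the argument would fail). The per-slot domination bookkeeping is a secondary technical burden, but it is routine once the correct invariant --- componentwise domination of the sorted age vectors --- has been identified.
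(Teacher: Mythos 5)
Your proof is correct, but it takes a genuinely different route from the paper's. After the common first step (observing that with equal rates the index rule reduces to ``serve the oldest user holding a fresh arrival''), the paper stays inside the MDP framework: it works with the $\alpha$-discounted value function, uses exchangeability of users under equal rates to swap ages inside $V_\alpha$, invokes the monotonicity of $V_\alpha$ in each age (Proposition~\ref{lemma:monotone}) to show the greedy action minimizes the discounted optimality equation at every state, and then passes to the average-cost criterion by the vanishing-discount argument already used for Theorem~\ref{theorem:optimal-switch}. You instead give a sample-path argument: a rank-based arrival coupling (valid precisely because the rank permutation is determined by the past and hence independent of the fresh, exchangeable Bernoulli indicators) plus an induction showing componentwise domination of the sorted age vectors; your case analysis of the induction step is the right one and does close. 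What your approach buys is a strictly stronger and more elementary conclusion: pathwise (hence stochastic) dominance of the total age at \emph{every} slot, for every finite horizon and against arbitrary history-dependent randomized policies, with no appeal to Sennott-type conditions, discounted-cost limits, or the existence of stationary optimal policies. What the paper's approach buys is brevity given infrastructure it has already built (Propositions~\ref{lemma:optimality-eq} and \ref{lemma:monotone}, the limit argument in Lemma~\ref{thm:stationary}), and it additionally certifies the greedy rule as a solution of the dynamic-programming equation, which ties Lemma~\ref{lemma:index-optimal} directly to the MDP and Whittle-index development elsewhere in the paper. The only point you should state explicitly if you write this up is the measurability argument behind the coupling --- that the permutation applied at slot $t$ is a function of the state at slot $t$ and is therefore independent of the slot-$t$ indicators --- since that is the precise place where the equal-rate hypothesis enters.
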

\begin{proof}
Note that, for this case, the index scheduling algorithm send an arriving packet with the largest age of information, i.e., $D(t)=\argmax_i X_i(t)\Lambda_i(t)$ for each slot $t$. Then, in Appendix~\ref{appendix:lemma:index-optimal} we show that the policy is $\Delta$-optimal. 
\end{proof}

In Section~\ref{section:simulation} we will further validate the index scheduling algorithm  for stochastically non-identical arrivals by simulations.


\section{Online scheduling algorithm design} \label{section:online}
Thus far, we have developed two scheduling algorithms in Sections~\ref{section:mdp} and \ref{section:whittle}. Both algorithms are  offline, as  the structural MDP scheduling algorithm and the index scheduling algorithm need the arrival statistics as prior information to pre-compute an optimal action for each virtual state and the Whittle index, respectively. To solve the more challenging case when the arrival statistics are unavailable, in this section we  develop online versions for both offline  algorithms.  

\subsection{An MDP-based online scheduling algorithm} \label{subsection:mdp-online}

We first develop an online version of the MDP scheduling algorithm by leveraging \textit{stochastic approximation} techniques \cite{learning-book:borkar}. The intuition is that,  instead of updating $V^{(m)}(\mathbf{s})$ for all virtual states in each iteration of Eq. (\ref{eq:rvia}), we update $V^{(m)}(\mathbf{s})$ by following a \textit{sample path}, which is a set of outcomes of the arrivals over slots. It turns out that the sample-path updates will converge to the $\Delta$-optimal solution. To that end, we need a \textit{stochastic version} of the RVIA. However, the RVIA in Eq. (\ref{eq:rvia}) is not suitable  because the expectation  is inside the minimization (see \cite{AMDP:Powell} for details). While minimizing the RHS of Eq. (\ref{eq:rvia}) for a given current state, we would need the transition probabilities to calculate the expectation. To tackle this, we design \textit{post-action states} for our problem, similar to the proof of Lemma~\ref{lemma:threshold-cost}.

We define  post-action state $\tilde{\mathbf{s}}$ as the ages and the arrivals \textit{after} an action. The state we used before is referred to as the \textit{pre-action} state. If $\mathbf{s}=(x_1, \cdots, x_N, \lambda_1, \cdots, \lambda_N) \in \mathbf{S}^{(m)}$ is a  virtual  state of the MDP $\Delta^{(m)}$, then the  virtual  post-action state after action $d$ is $\tilde{\mathbf{s}}=(\tilde{x}_1, \cdots, \tilde{x}_N, \tilde{\lambda}_1, \cdots, \tilde{\lambda_N})$ with  
\begin{align*}
\tilde{x}_i=\left\{
\begin{array}{ll}
1 & \text{if $i=d$ and $\lambda_i=1$;} \\
\left[x_i+1\right]^+_m & \text{else},
\end{array}
\right.
\end{align*}
and $\tilde{\lambda}_i=\lambda_i$ for all $i$. 

Let $\tilde{V}^{(m)}(\tilde{\mathbf{s}})$ be the value function based on the post-action states defined by
\begin{align*}
\tilde{V}^{(m)}(\tilde{\mathbf{s}})=E[V^{(m)}(\mathbf{s})],
\end{align*}
where the expectation  is taken over all possible the pre-action states $\mathbf{s}$ reachable from the post-action state.
We can then  write down the post-action average cost optimality equation \cite{AMDP:Powell} for the  virtual  post-action  state $\tilde{\mathbf{s}}=(\tilde{x}_1, \cdots, \tilde{x}_N,\tilde{\lambda}_1, \cdots, \tilde{\lambda}_N)$:
\begin{align*}
&\tilde{V}^{(m)}(\tilde{\mathbf{s}})+V^{(m)*}\\
=&E\left[\min_{d \in \{0, 1, \cdots, N\}} C\left((\tilde{\mathbf{x}},\tilde{\boldsymbol{\lambda}}'),d\right)\right.\\
&\left.\hspace{1cm} +\tilde{V}^{(m)}([\tilde{\mathbf{x}}+\mathbf{1}-\tilde{\mathbf{x}}_d \tilde{\boldsymbol{\lambda}}'_d]^+_m,\tilde{\boldsymbol{\lambda}}')\right],
\end{align*}
where $\tilde{\boldsymbol{\lambda}'}$ is the   next  arrival vector;  $\tilde{\mathbf{x}}_i=(0, \cdots, \tilde{x}_i, \cdots,0)$ denotes the zero vector except for the $i$-th entry being replaced by $\tilde{x}_i$; $\tilde{\boldsymbol{\lambda}}'_i=(0, \cdots, \tilde{\lambda}_i, \cdots,0)$ denotes the zero vector except for the $i$-th entry being replaced by $\tilde{\lambda}_i$; the vector $\mathbf{1}=(1, \cdots, 1)$ is the unit vector. 
From the above optimality equation, the RVIA  is as follows: 
\begin{align}
\tilde{V}^{(m)}_{n+1}(\tilde{\mathbf{s}})=&E\left[\min_{d \in \{0, 1, \cdots, N\}} C\left((\tilde{\mathbf{x}},\tilde{\boldsymbol{\lambda}}'),d\right)\right.\nonumber\\
&\left.+\tilde{V}^{(m)}_n([\tilde{\mathbf{x}}+\mathbf{1}-\mathbf{x}_d  \tilde{\boldsymbol{\lambda}}'_d]^+_m,\tilde{\boldsymbol{\lambda}}')\right] - \tilde{V}^{(m)}_n(\mathbf{0}).  \label{eq:rvia-post}
\end{align}

Subsequently, we propose the \textit{MDP-based online  scheduling algorithm} in Alg.~\ref{alg:online} based on the stochastic version of the RVIA. In Lines~\ref{alg:online-init-start}-\ref{alg:online-init-end}, we initialize $\tilde{V}^{(m)}(\tilde{\mathbf{s}})$ of all  virtual  post-action states and start from the reference point. Moreover, by $v$ we record $\tilde{V}^{(m)}(\tilde{\mathbf{s}})$ of the current  virtual  post-action state.
By observing the current arrivals $\mathbf{\Lambda}(t)$ and plugging in Eq.~(\ref{eq:rvia-post}),  the expectation in Eq. (\ref{eq:rvia-post}) can be removed; as such, in Line \ref{alg:online-optimal-decision}  we  optimally update a user by minimizing Eq.~(\ref{eq:optimal-online-decision}). Then, we update $\tilde{V}^{(m)}(\tilde{\mathbf{s}})$ of the current  virtual  post-action state in Line~\ref{alg:online-value-update}, where $\gamma(t)$ is a \textit{stochastic step-size} in slot~$t$ to strike a balance between the previous $\tilde{V}^{(m)}(\tilde{\mathbf{s}})$ and the updated value $v$. Finally, the next  virtual  post-action state is updated in Lines~\ref{alg:online-state-update} and \ref{alg:online-state-update2}

\begin{algorithm}[!t]
\SetCommentSty{text}
\SetAlgoLined 
\SetKwFunction{Union}{Union}\SetKwFunction{FindCompress}{FindCompress} \SetKwInOut{Input}{input}\SetKwInOut{Output}{output}

\tcc{\textit{\textbf{Initialization}}}
$\tilde{V}^{(m)}(\tilde{\mathbf{s}}) \leftarrow 0$ for all states $\tilde{\mathbf{s}}\in \mathbf{S}^{(m)}$;\\ \label{alg:online-init-start}
$\tilde{\mathbf{s}} \leftarrow \mathbf{0}$;\\ 
$v \leftarrow 0$;\\ \label{alg:online-init-end}

\While{$1$}{
\tcc{\textit{\textbf{Decision in slot $t$}}}
We optimally make a decision $D^*(t)$ in slot $t$ according to the current arrivals $\boldsymbol{\Lambda}(t)=(\Lambda_1(t), \cdots, \Lambda_N(t)$ in slot $t$: \label{alg:online-optimal-decision}
\begin{align}
D^*(t)=&\argmin_{d \in \{0, 1, \cdots, N\}}  C\left((\tilde{\mathbf{x}},\boldsymbol{\Lambda}(t)),d\right)\nonumber\\
&+\tilde{V}^{(m)}([\tilde{\mathbf{x}}+\mathbf{1}-\tilde{\mathbf{x}}_d \boldsymbol{\Lambda}_d(t)]^+_m,\boldsymbol{\Lambda}(t)); \label{eq:optimal-online-decision}
\end{align} 

\tcc{\textit{\textbf{Value update}}}
$v \leftarrow   C\left((\tilde{\mathbf{x}},\boldsymbol{\Lambda}(t)),D^*(t)\right)+\tilde{V}^{(m)}([\tilde{\mathbf{x}}+\mathbf{1}-\tilde{\mathbf{x}}_{D^*(t)} \boldsymbol{\Lambda}_{D^*(t)}(t)]^+_m ,\boldsymbol{\Lambda}(t))-\tilde{V}^{(m)}(\mathbf{0})$;\\

$\tilde{V}^{(m)}(\tilde{\mathbf{s}})\leftarrow (1-\gamma(t))\tilde{V}^{(m)}(\tilde{\mathbf{s}})+\gamma(t) v$; \\\label{alg:online-value-update}

\tcc{\textit{\textbf{post-action state update}}}
$\tilde{\mathbf{x}} \leftarrow [\tilde{\mathbf{x}}+\mathbf{1}-\tilde{\mathbf{x}}_{D^*(t)} \boldsymbol{\Lambda}_{D^*(t)}(t)]^+_m$;\\ \label{alg:online-state-update}

$\tilde{\boldsymbol{\lambda}} \leftarrow \boldsymbol{\Lambda}(t)$. \label{alg:online-state-update2}
}

\caption{MDP-based online scheduling algorithm}
\label{alg:online}
\end{algorithm}

Next, we show the optimality of the MDP-based online scheduling algorithm as slot $t$ approaches infinity.

\begin{theorem}
If $\sum_{t=0}^{\infty} \gamma(t) = \infty$ and $\sum_{t=0}^{\infty} \gamma^2(t) < \infty$, then Alg.~\ref{alg:online} converges to $\Delta^{(m)}$-optimum. 
\end{theorem}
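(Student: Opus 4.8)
The plan is to recognize Alg.~\ref{alg:online} as an asynchronous stochastic approximation scheme for the post-action relative value iteration of Eq.~(\ref{eq:rvia-post}), and then to invoke the ODE method for stochastic approximation \cite{learning-book:borkar}. First I would rewrite the value update in Line~\ref{alg:online-value-update} in canonical form. The key observation is that, conditioned on the current estimate $\tilde{V}^{(m)}$ and the current post-action state $\tilde{\mathbf{s}}$, the sample $v$ is an unbiased estimate of the deterministic RVIA operator: taking expectation over the arrival vector $\boldsymbol{\Lambda}(t)$, which is i.i.d.\ and independent of the past, recovers exactly the right-hand side of Eq.~(\ref{eq:rvia-post}). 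Hence the update reads
\begin{align*}
\tilde{V}^{(m)}_{n+1}(\tilde{\mathbf{s}})=\tilde{V}^{(m)}_n(\tilde{\mathbf{s}})+\gamma(n)\Bigl[\bigl(T\tilde{V}^{(m)}_n\bigr)(\tilde{\mathbf{s}})-\tilde{V}^{(m)}_n(\tilde{\mathbf{s}})+M_{n+1}(\tilde{\mathbf{s}})\Bigr],
\end{align*}
where $T$ is the post-action RVIA operator induced by Eq.~(\ref{eq:rvia-post}) and $M_{n+1}$ is a martingale-difference noise with respect to the natural filtration $\mathcal{F}_n$.

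Second, I would verify the standard hypotheses of the ODE method. Since we work with the truncated MDP $\Delta^{(m)}$, the state space $\mathbf{S}^{(m)}$ is \emph{finite} and the immediate cost is bounded; consequently $T$ is Lipschitz continuous and the noise is square-integrable, with $E[\|M_{n+1}\|^2\mid\mathcal{F}_n]\le K(1+\|\tilde{V}^{(m)}_n\|^2)$ for some constant $K$. The step-size requirements $\sum_t\gamma(t)=\infty$ and $\sum_t\gamma^2(t)<\infty$ are precisely the Robbins--Monro conditions assumed in the theorem. Because only the component of the \emph{current} post-action state is updated per slot, the scheme is asynchronous; I would argue that every post-action state is visited infinitely often almost surely, which follows from the unichain property of $\Delta^{(m)}$ established in Theorem~\ref{theorem:truncation} together with the persistent randomness of the Bernoulli arrivals. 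The asynchronous convergence theory of \cite{learning-book:borkar} then applies.

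Third, the crux is to identify the limiting ODE $\dot{v}=Tv-v$ and to show it possesses a globally asymptotically stable equilibrium equal to the fixed point of $T$. The subtraction of the reference term $\tilde{V}^{(m)}_n(\mathbf{0})$ in the update plays the role of the relative-value normalization that keeps the iterates bounded and pins down a unique equilibrium; this equilibrium coincides with the solution of the post-action average-cost optimality equation for $\Delta^{(m)}$, whose existence and optimality are guaranteed by the unichain structure (Theorem~\ref{theorem:truncation}). Once $\tilde{V}^{(m)}_n$ is shown to converge to this fixed point, the greedy rule in Eq.~(\ref{eq:optimal-online-decision}) converges to a $\Delta^{(m)}$-optimal decision, which completes the proof. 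I expect the main obstacle to be the global asymptotic stability of this relative-value ODE together with the boundedness (stability) of the iterates, since the average-cost dynamic-programming operator is only nonexpansive rather than a contraction; this is exactly where the reference-state subtraction and the unichain property of $\Delta^{(m)}$ are essential, and I would lean on the corresponding average-cost stochastic-approximation arguments in \cite{learning-book:borkar,AMDP:Powell}.
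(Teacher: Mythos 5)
Your proposal is correct and follows essentially the same route as the paper: the paper's entire proof is to invoke the stochastic-approximation convergence results for relative value iteration (its references \cite{learning:borkar1,learning:borkar2}) and observe that the only problem-specific hypothesis to check is that the truncated MDP $\Delta^{(m)}$ is unichain, which was already verified in the proof of Theorem~\ref{theorem:truncation}. Your sketch simply unpacks what sits inside those citations (the ODE method, martingale noise, asynchronous updates, and stability of the relative-value fixed point), with the unichain property playing the same pivotal role in both arguments.
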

\begin{proof}
According to \cite{learning:borkar1,learning:borkar2}, we only need to verify that the truncated MDP is unichain, which  has been completed in Appendix \ref{appendix:theorem:truncation}.
\end{proof}

In the above theorem, $\sum_{t=0}^{\infty} \gamma(t) =\infty$ implies that Alg.~\ref{alg:online} needs an infinite number of iterations to learn the $\Delta$-optimal solution, while the offline Alg. \ref{alg:offline}  converges to the optimal solution in a finite number of iterations. Moreover, $\sum_{t=0}^{\infty} \gamma^2(t) < \infty$ means that the \textit{noise} from measuring $\tilde{V}^{(m)}(\tilde{\mathbf{s}})$ can be controlled. 
Finally, we want to emphasize that the proposed Alg.~\ref{alg:online} is asymptotically $\Delta$-optimal, i.e., it converges to the $\Delta$-optimal solution when both the truncation $m$ and the slot $t$  go to infinity. In Section VI, we will also numerically investigate the algorithm over finite  slots.

\subsection{An index-based online scheduling algorithm} \label{subsection:index-online}
Next,  we note that the simple Whittle index $I(x, \lambda)$ in Eq.~(\ref{eq:index}) depends on its arrival probability only. Thus, if the arrival probability is unknown, for each slot $t$ we revise the index by 
\begin{align*}
I(x,\lambda,t)=\left\{
\begin{array}{ll}
0 & \text{if $\lambda=0$;}\\
\frac{x^2}{2}-\frac{x}{2}+\frac{x}{p(t)} & \text{if $\lambda=1$,}
\end{array}
\right.
\end{align*}
where $$p(t)=\frac{\sum_{\tau=0}^t \Lambda(\tau)}{t+1}=\frac{p(t-1)\cdot t+\Lambda(t)}{t+1}$$ is the running average arrival rate. Then, we  propose the  \textit{index-based online scheduling algorithm} as follows. For each slot $t$, the BS observes  age $X_i(t)$ and arrival indicator $\Lambda_i(t)$ for every user $u_i$; then, calculate $p_i(t)$ and update user $u_i$ with the highest value of the revised Whittle index $I(X_i(t),\Lambda_i(t),t)$.

\section{Simulation results} \label{section:simulation}
In this section we conduct extensive computer simulations for the proposed four scheduling algorithms. We  demonstrate the switch-type structure of Alg.~\ref{alg:offline} in Section \ref{subsection:sim-switch}. In Section~\ref{subsection:sim-study} we  compare the proposed  scheduling algorithms, especially to validate the performance of the online algorithms over  finite  slots.   Finally, we study the wireless broadcast network with buffers at the BS in Section~\ref{subsection:sim-buffer}.

\subsection{Switch-type structure of Alg. \ref{alg:offline}} \label{subsection:sim-switch}
\begin{figure}[!h]
\centering
\includegraphics[width=.52\textwidth]{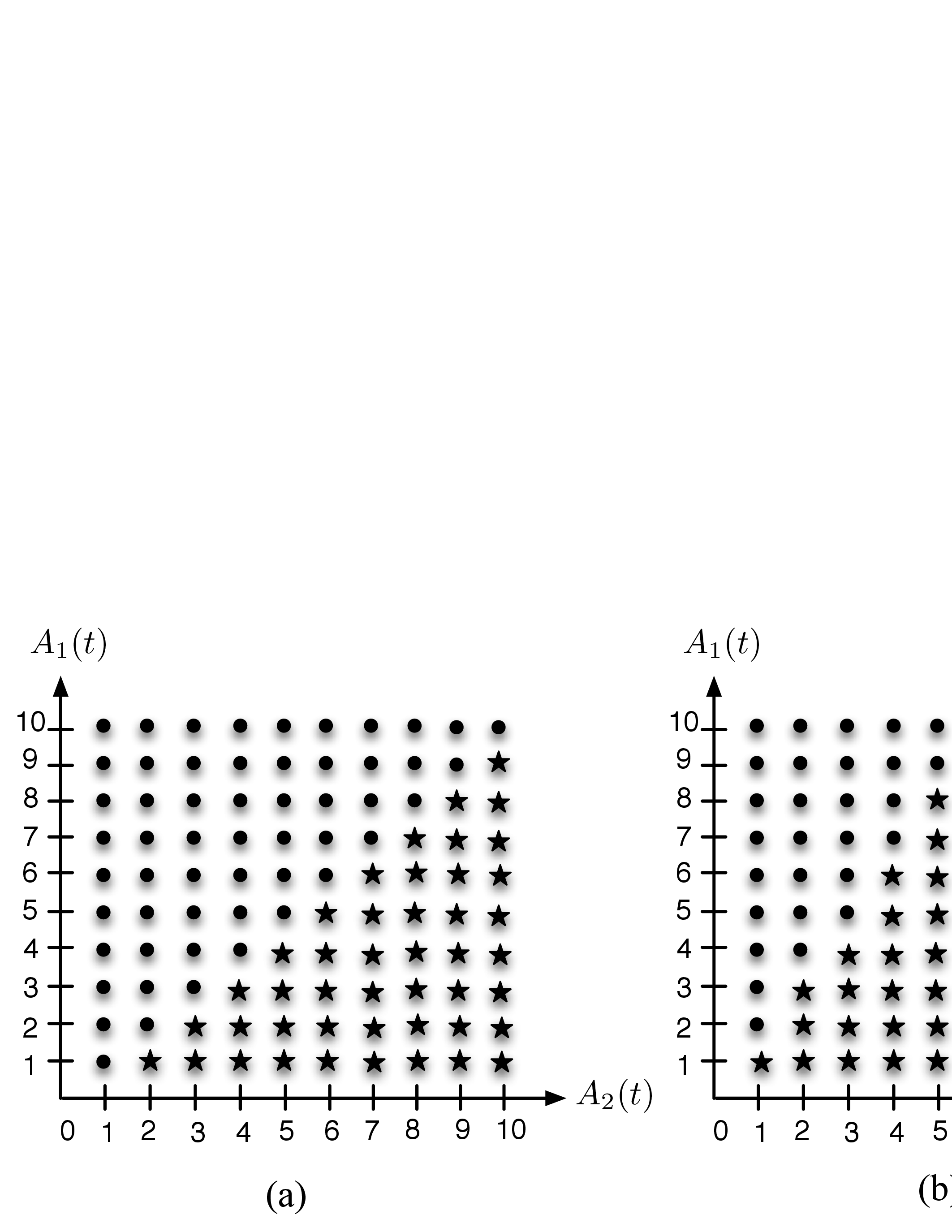}
\caption{Switch structure of Alg.~\ref{alg:offline} for (a) $p_1=p_2=0.9$; (b)  $p_1=0.9$, $p_2=0.5$. The dots  represent $D(t) = 1$ to update user $u_1$ and the stars  mean $D(t) = 2$ to update user $u_2$.}
\label{fig:switch}
\end{figure}

Figs. \ref{fig:switch}-(a) and \ref{fig:switch}-(b) show the switch-type structure of Alg.~\ref{alg:offline} for two users, when the BS has packets for both users. The experiment setting is as follows. We run Alg. \ref{alg:offline} with the boundary $m=10$ over  100,000 slots to search an optimal action for each virtual state. Moreover, we consider two arrival rate vectors, with $(p_1,p_2)$ being $(0.9, 0.9)$ and $(0.9, 0.5)$ in Figs. \ref{fig:switch}-(a) and \ref{fig:switch}-(b), respectively, where the \textit{dots} represent $D(t)=1$ and the \textit{stars} mean $D(t)=2$ when the BS has both arrivals in the same slot. We observe the switch structure in the figures, while Fig. \ref{fig:switch}-(a) is consistent with the index scheduling algorithm in Section~\ref{section:whittle} by simply comparing the ages of the two users.   Moreover,  by fixing the  arrival rate  $p_1=0.9$ for the first user, the BS will give a higher priority to the second user as $p_2$ decreases in Fig.~\ref{fig:switch}-(b). That is because the second user takes more time to wait for the next arrival and becomes a bottleneck.

\subsection{Numerical studies of the proposed scheduling algorithms} \label{subsection:sim-study}

\begin{figure}
\centering
\includegraphics[width=.45\textwidth]{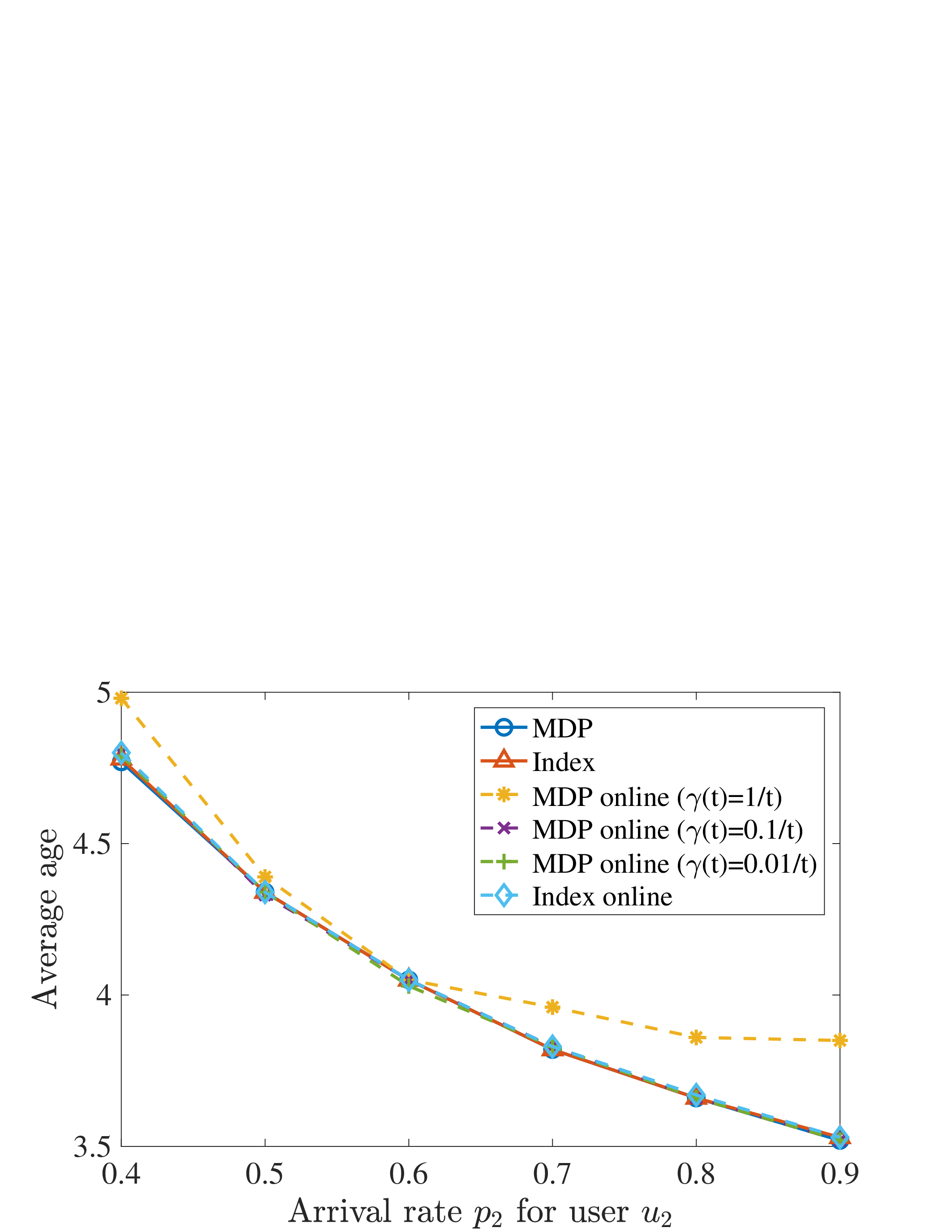}
\caption{Average age for different arrival rate $p_2$, where we fix $N=2$ and $p_1=0.6$.}
\label{fig:2user1}
\end{figure}
\begin{figure}
\centering
\includegraphics[width=.45\textwidth]{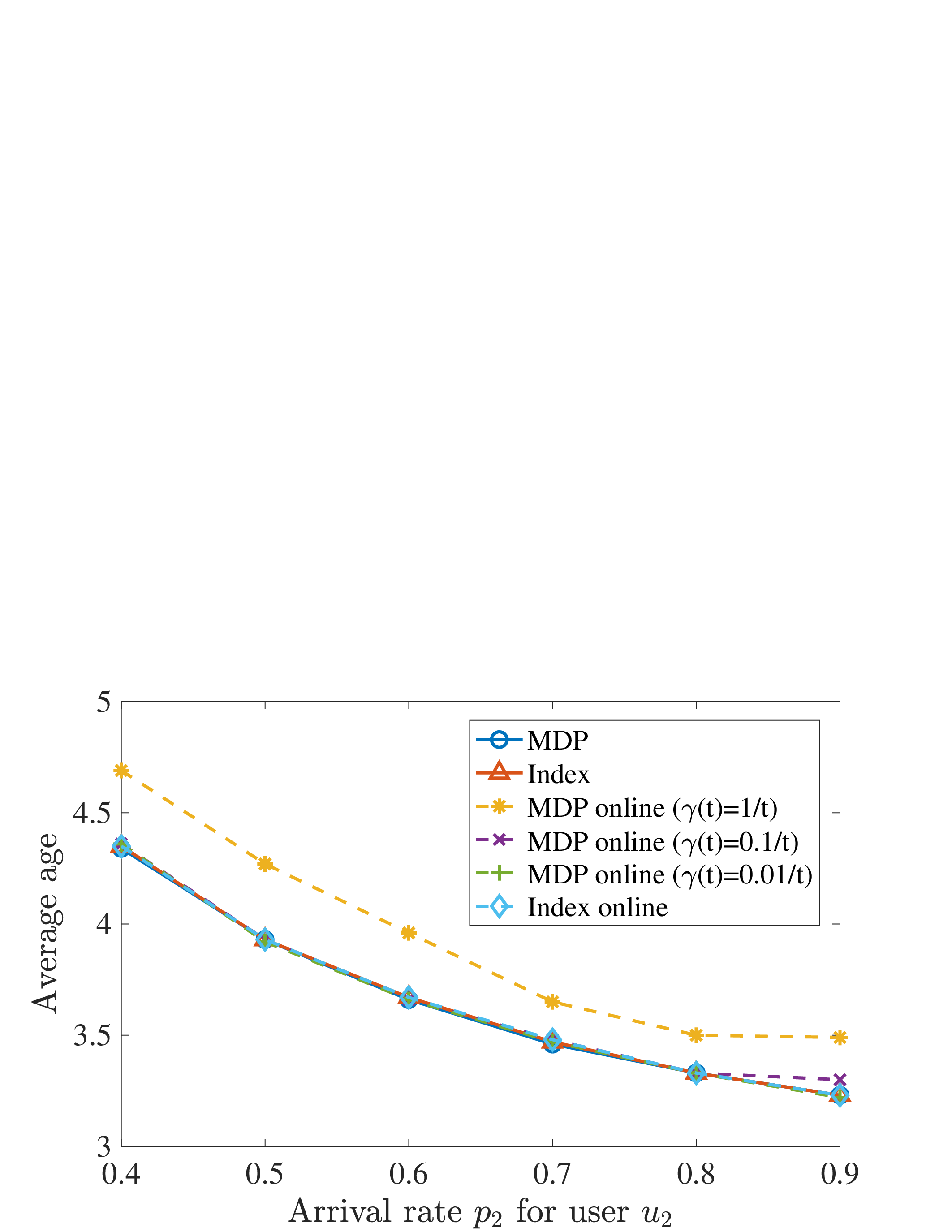}
\caption{Average age for different arrival rate $p_2$, where we fix $N=2$ and $p_1=0.8$.}
\label{fig:2user2}
\end{figure}
\begin{figure}
\centering
\includegraphics[width=.45\textwidth]{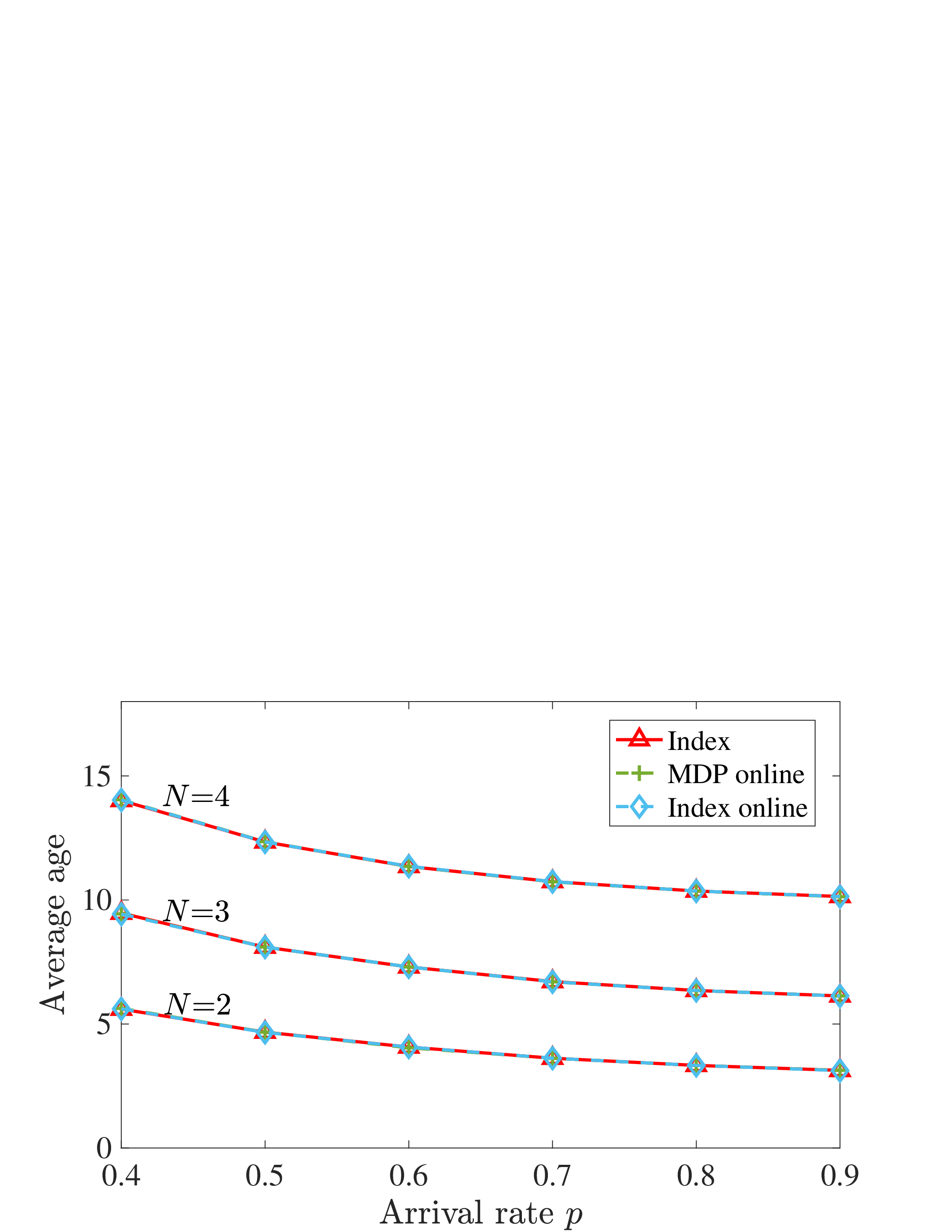}
\caption{Average age for different arrival rate $p_1=p_2=p$, where we fix $N=2,3,4$, respectively.}
\label{fig:4user}
\end{figure}
\begin{figure}
\centering
\includegraphics[width=.45\textwidth,trim={0 .1cm 0 .5cm},clip]{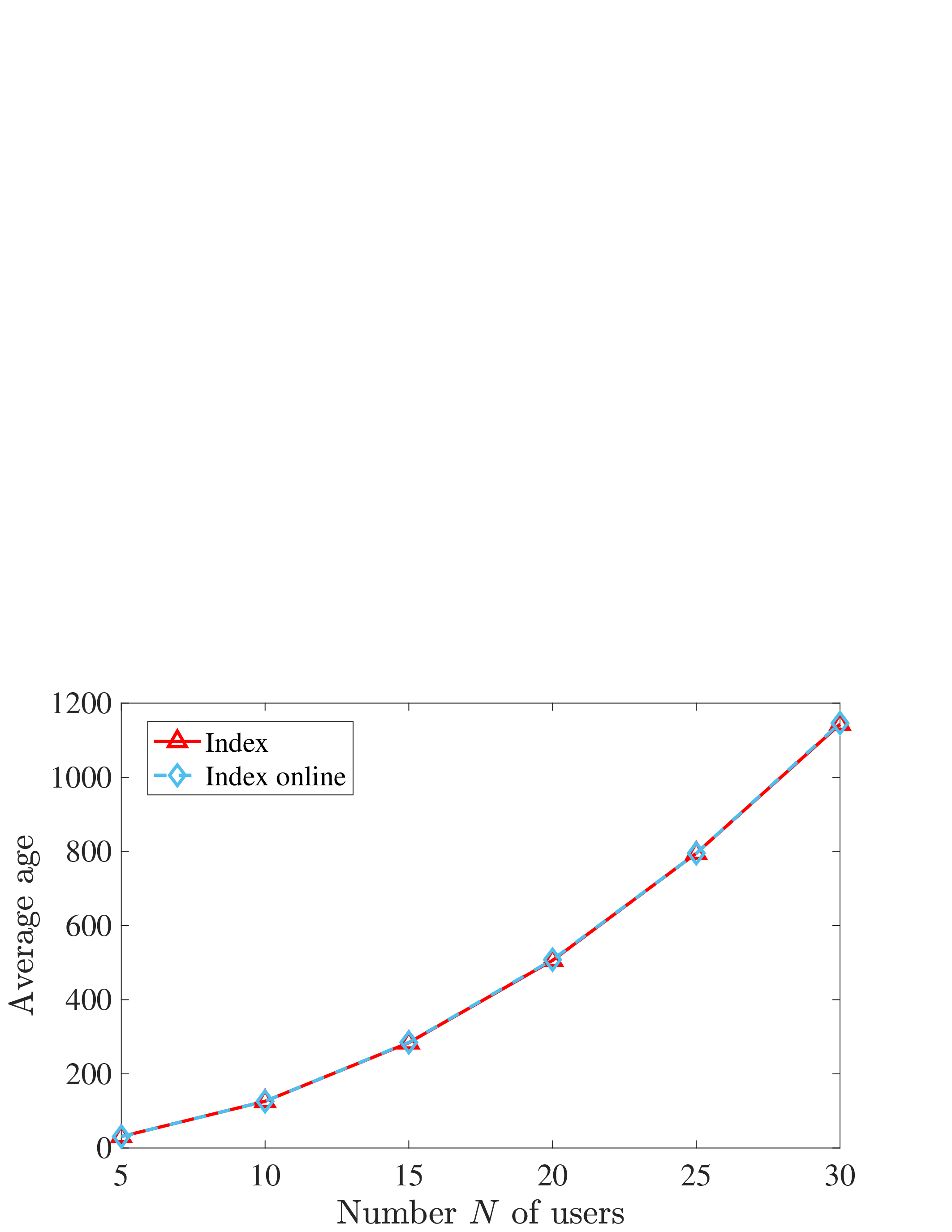}
\caption{Average age for different different number $N$ of users, where we fix the arrival rate $p_1=p_2=1/N$.}
\label{fig:many-user}
\end{figure}

In this section, we  examine  the proposed  four algorithms from various perspectives. First, we show the average age of two users for different $p_2$ in Figs.~\ref{fig:2user1} and \ref{fig:2user2} with fixed $p_1=0.6$ and $p_1=0.8$, respectively. Here, we set the boundary $m=30$ for the structural MDP scheduling algorithm. For the MDP-based online scheduling algorithms, we set the boundary $m=100$; moreover, we consider different step sizes in both figures, i.e., $\gamma(t)=1/t$, $\gamma(t)=0.1/t$, and $\gamma(t)=0.01/t$. All the results are averaged over 100,000 slots.  How to choose the best step size with provably performance guarantee is  interesting, but is out of  scope of this paper. By simulation, we observe that $\gamma(t)=0.01/t$ works perfectly for our problem to achieve the minimum average age. Moreover, comparing with the structural MDP scheduling algorithm,  the low-complexity index algorithm almost achieves the minimum average age, with  invisible performance loss. Even without the knowledge of the arrival statistics,  the MDP-based online scheduling algorithm with $\gamma(t)=0.01/t$ and the index-based online scheduling algorithm are both close to the minimum average age. 

Second, we show the average age of more than two users  with  $N=2,3,4$, respectively, in Fig.~\ref{fig:4user}, where we consider $p_1=p_2=p$. According to Lemma~\ref{lemma:index-optimal}, the index scheduling algorithm is age-optimal; thus, we find that both online  scheduling algorithms are almost age-optimal again, where we use $\gamma(t)=0.01/t$ only.  

Third, we show that the average age of many users in Fig.~\ref{fig:many-user}. In this setting,  the two MDP-based scheduling algorithms may be unfeasible because the resulting huge state space; thus,  we  consider the two index-based scheduling algorithms only. We find that the low-complexity index-based online scheduling algorithm again achieves the minimum average age. 

 By these numerical studies, we would suggest implementing the index-based online scheduling algorithm. It is not only simple to implement practically, but also has good performance.


\subsection{Networks with buffers} \label{subsection:sim-buffer}
Thus far, we consider the no-buffer network only. 
Finally, we study the buffers at the BS to store the latest information for each user. Similar to Section~\ref{section:mdp} we can find an age-optimal scheduling by an MDP.  However, we need to redefine the states of the MDP $\Delta$. In addition to the age $X_i(t)$ of information at  \textit{user} $u_i$, by $Y_i(t)$ we define the \textit{initial age of the information} at the \textit{buffer} for user $u_i$; precisely, 
\begin{align*}
Y_i(t)=\left\{
\begin{array}{ll}
0 & \text{if $\Lambda(t)=1$;}\\
Y_i(t-1)+1 & \text{else}.
\end{array}
\right.
\end{align*}	
Then, we redefine the state by $\mathbf{S}(t)=\{X_1(t), \cdots, X_N(t),$ $Y_1(t), \cdots, Y_N(t)\}$. Moreover, the immediate cost is redefined as  
\begin{align*}
C(\textbf{S}(t), D(t)=d) 
=&\sum_{i=1}^N (X_i(t)+1)-(X_d(t)-Y_d(t)),
\end{align*}
where we define $Y_0(t)=0$ for all $t$. Then, similar to Section~\ref{section:mdp}, we can show that
\begin{itemize}
	\item  there exists a deterministic stationary policy that is age-optimal;
	\item the similar sequence of approximate MDPs  converges;
	\item an age-optimal scheduling algorithm is switch-type: for every user $u_i$, if a $\Delta$-optimal action at state $\mathbf{s}=(x_i, \mathbf{x}_{-i}, \mathbf{y})$ is $d^*_{(x_i, \mathbf{x}_{-i}, \mathbf{y})}=i$, then  $d^*_{(x_i+1,\mathbf{x}_{-i},\mathbf{y})}= i$, where $\mathbf{y}=(y_1, \cdots, y_N)$ is the vector of all initial ages.
\end{itemize}
We  then modify Alg.~\ref{alg:offline} for the network with the buffers, as an age-optimal scheduling algorithm.  

To study the effect of the buffers, we consider the truncated MDP with the boundary $m=30$ and generate arrivals with $p_1=p_2=p$.  After averaging the age over 100,000 slots, we obtain the  average  age in Fig. \ref{fig:buffer}  for various  $p$, where the red curve with the triangle markers indicates the no-buffer networks (by employing Alg.~\ref{alg:offline}) and  the blue curve with the star markers indicates the network with the buffers.

In this setting, we see mild improvement of the average  age  by exploiting  the buffers. The buffers reduce the average age by only around $(5.6-5.3)/5.6\approx 5\%$ when $p=0.4$, and even lower when $p$ is higher. Let us discuss  the following three cases when \textit{both users have  arrivals in some slot}: 
\begin{itemize}
	\item \textit{When both $p_1$ and $p_2$ are high}: That means the user who is not updated currently  has a new arrival in the next slot with a high probability; as such, the old packet in the buffer seems not that effective.  
	\item   \textit{When both $p_1$ and $p_2$ are low}: Then,  the possibility of the two arrivals in the same slot is very low. Hence, this would be a trivial case. 
	\item \textit{When one of $p_1$ and $p_2$ are high and the other is low}: In this case, the BS will give the user with the lower arrival rate a higher update priority, as a packet for the other user will arrive shortly. 
\end{itemize}
According to the above discussions, we observe that the buffers might not be that effective as expected. The  no-buffer network is not only simple for practical implementation but also works  well.

\begin{figure}[!t]
\centering
\includegraphics[width=.45\textwidth]{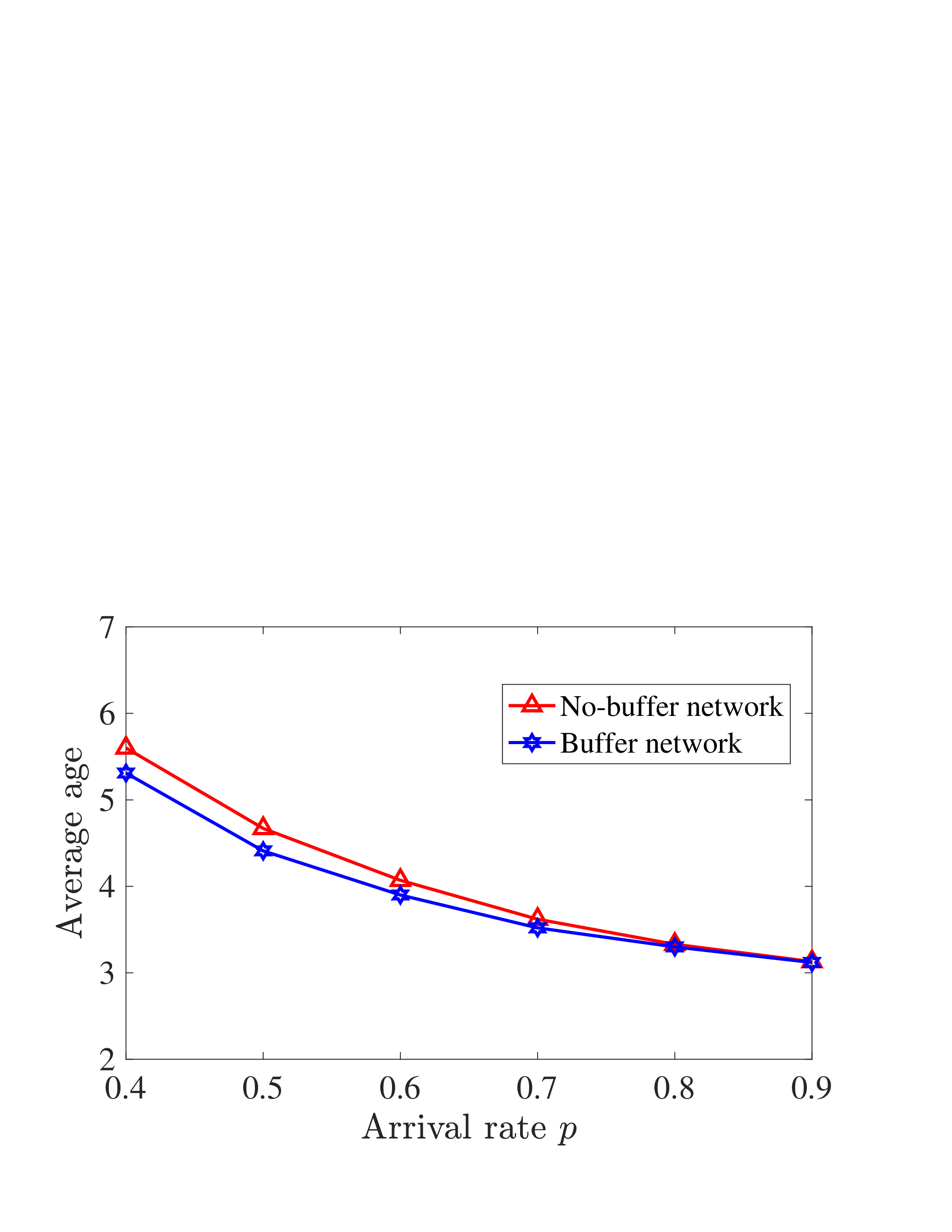}
\caption{Minimum average age for the network with/without the buffers by running the MDP-based scheduling algorithms, where  the arrival rates $p_1=p_2=p$.}
\label{fig:buffer}
\end{figure}

\section{Concluding remarks}
In this paper, we treated a wireless broadcast network, where many users are interested in different information that should be delivered by a base-station. We studied the age of information by designing and analyzing  four scheduling algorithms, i.e., the structural MDP scheduling algorithm, the index scheduling algorithm, the MPD-based online scheduling algorithm, and the index-based online scheduling algorithm. We not only theoretically investigated the optimality of the proposed algorithms, but also validate their performance via the computer simulations. It turns out that the low-complexity index scheduling algorithm and both online scheduling algorithms almost achieve the minimum average age. 

Some possible future works are discussed as follows. We focused on the no-buffer network in this paper.  It is an issue to study provable effectiveness of the buffers and to characterize the regime under which the no-buffer network works with marginal performance loss. Moreover, it is interesting to investigate structural results like ours for simplifying the calculation of the Whittle index for networks with buffers.  Finally, the paper treated a single-hop network only. It is interesting to extend our results to multi-hop networks. 

\section*{Acknowledgments}
The work of Yu-Pin Hsu is supported by Ministry of Science and
Technology, Taiwan (Project No. 107-2221-E-305-007-MY3).

{\small
	\bibliographystyle{IEEEtran}
	\bibliography{IEEEabrv,ref}
}

\appendices

\section{Proof of  Proposition \ref{lemma:optimality-eq} } \label{appendix:lemma:optimality-eq}
According to \cite{stationary-policy:Sennott}, it suffices to show that $V_{\alpha}(\mathbf{s})< \infty$ for every initial state $\mathbf{s}$ and discount factor $\alpha$. Let  $f$ be the deterministic stationary policy of  the MDP $\Delta$ that chooses $D(t)=0$ for all $t$. Note that, for initial state $\mathbf{s}=(x_1, \cdots, x_N, \lambda_1, \cdots, \lambda_N)$, we have
\begin{align*}
 V_{\alpha}(\mathbf{s};f)=& \lim_{T \rightarrow \infty}E_{f}\Bigl[\sum_{t=0}^{T} \alpha^t C(\mathbf{S}(t), D(t)) | \mathbf{S}(0)=\mathbf{s}\Bigr] \\
=&\sum_{t=0}^{\infty} \alpha^t \left[(x_1+t)+\cdots+(x_N+t)\right]\\
=&\frac{x_1+\cdots+x_N}{1-\alpha}+ \frac{\alpha N}{(1-\alpha)^2} < \infty.
\end{align*}
By  definition of the optimality,  we conclude  $V_{\alpha}(\mathbf{s}) < \infty$ since $V_{\alpha}(\mathbf{s}; f) < \infty$.

\section{Proof of Proposition \ref{lemma:monotone} } \label{appendix:lemma:monotone}
The proof is based on  induction on $n$ of the value iteration in  Eq~(\ref{eq:discount-itr}). 
The result clearly holds for  $V_{\alpha, 0}(\mathbf{s})$. Suppose that $V_{\alpha, n}(\mathbf{s})$ is non-decreasing in $x_i$. First,  note that the immediate cost $C(\mathbf{s},d)=\sum_{i=1}^N(x_i+1)-x_d \lambda_d$ is a non-decreasing function in $x_i$. Second,  $E[V_{\alpha,n}(\mathbf{s}')]$ is also a non-decreasing function in $x_i$ according to the induction hypothesis. 
 Since the minimum  operator (in Eq. (\ref{eq:discount-itr}))  holds the non-decreasing property, we conclude that $V_{\alpha, n+1}(\mathbf{s})$ is a non-decreasing function as well.

\section{Proof of Lemma \ref{thm:stationary} } \label{appendix:thm:stationary}
 We divide  state space $\mathbf{S}$ into two disjoint sets $\mathbf{S}_1$ and $\mathbf{S}_2$, where the age vectors in the set $\mathbf{S}_1$ belongs to  $\mathbf{X}$. Then,  set $\mathbf{S}_2$ is  a transient set for any scheduling algorithm that will update each user for at lease once. We consider two cases as follow.

First, we focus on the MDP $\Delta$ with the restricted state space $\mathbf{S}_1$, and show the lemma holds.  According to \cite{stationary-policy:Sennott}, we need to verify that the following two conditions are satisfied. 
\begin{enumerate}
	
	\item  \textit{There exists a  deterministic stationary policy $f$ of the MDP $\Delta$  such that the resulting discrete-time Markov chain (DTMC) by the policy is irreducible, aperiodic,  and the average cost $V(f)$ is finite}:  We consider the  deterministic stationary policy $f$ as the one that updates a user with an arrival and the largest age. It is obvious that the resulting DTMC is irreducible and aperiodic. Next, we transform the age  of information  into an \textit{age-queueing network} in Fig.~\ref{fig:age-queueing} consisting of $N$ age-queues $q_1, \cdots, q_N$, age-packet arrivals to each queue, and a server. 
In each slot an age-packet arrives at the system, since the age increases by one for each slot.	In each slot a channel associated with queue $q_i$ is ON with probability $p_i$. For each slot the server can serve a queue with an \textit{infinite} number of age-packets if its channel is ON. Then, the long-run average total age-queue size is the average cost. Note that the arrival rate is interior of the \textit{capacity region} \cite{neely:book} of the age-queueing network. Moreover, the policy $f$ is the \textit{maximum weight scheduling} algorithm \cite{neely:book} that is shown to be throughput-optimal; as such, the  average age-queue size is finite. Thus, the average cost $V(f)$ is finite as well. 

	\begin{figure}[!t]
	\centering
	\includegraphics[width=.35\textwidth]{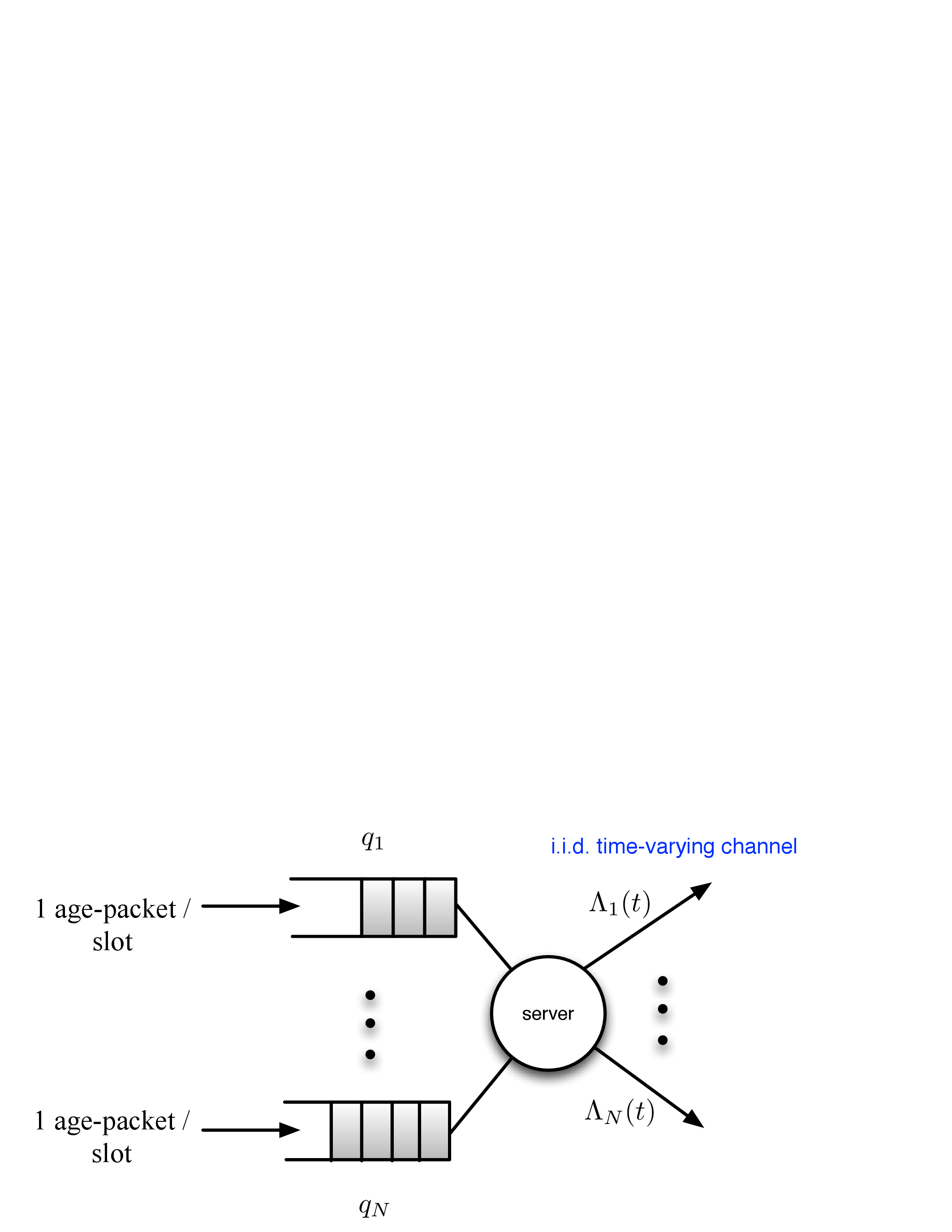}
	\caption{Age-queueing network.}
	\label{fig:age-queueing}
	\end{figure}
	
%
%
%
		\item \textit{There exists a nonnegative $L$ such that the relative cost function $h_{\alpha}(\mathbf{s}) \geq -L$ for all $\mathbf{s}$ and $\alpha$}: Let $C_{\mathbf{s},\mathbf{s}'}(\pi)$ be the expected cost of the first passage from state $\mathbf{s}$ to state $\mathbf{s}'$ under  policy $\pi$. Then, using the deterministic stationary policy $f$ in the first condition, we have $C_{\mathbf{s},\mathbf{s}'}(f) < \infty$ (see \cite[Proposition 4]{stationary-policy:Sennott}) and $ h_{\alpha}(\mathbf{s}) \geq - C_{\mathbf{0},\mathbf{s}}$ (see \cite[proof of Proposition 5]{stationary-policy:Sennott}). Moreover, as $V_{\alpha}(\mathbf{s})$ is a non-decreasing function in $x_i$ (see our Proposition \ref{lemma:monotone}), only  state  $\mathbf{s}$ with $x_i \leq N$ for all $i$ can probably result in a lower value of $V_{\alpha}(\mathbf{s})$ than $V_{\alpha}(\mathbf{0})$. We hence can choose $L=\max_{\mathbf{s}\in \mathbf{S}: x_i \leq N, \forall i} C_{\mathbf{0},\mathbf{s}}$.

\end{enumerate}
Thus,  according to \cite{stationary-policy:Sennott}, there exists a deterministic stationary policy that is $\Delta$-optimal and  minimum average cost is the constant $V^*$, independent of the initial state. 

Second, we note that, if the initial state  belongs to $\mathbf{S}_2$, then a $\Delta$-optimal policy will update each user for at least once (e.g, using the above deterministic scheduling algorithm $f$); otherwise, the average cost is infinite. In other words,  state $\mathbf{S}(t)$ will enter $\mathbf{S}_1$ in \textit{finite} time, and always stay in the set $\mathbf{S}_1$ onwards.   Thus, the average cost \textit{until the state enters $\mathbf{S}_1$} approaches zero as slots go to infinity, and the minimum average cost is still the constant $V^*$ as in the first case.  Moreover, there exists a deterministic stationary policy that is $\Delta$-optimal, e.g., following the deterministic stationary policy $f$ before entering $\mathbf{S}_1$ and then following the $\Delta$-optimal deterministic stationary policy in the first case.

\section{Proof of Theorem \ref{theorem:finite-approximation}} \label{appendix:theorem:finite-approximation}
Let $V^{(m)}_{\alpha}(\mathbf{s})$ and $h^{(m)}_{\alpha}(\mathbf{s})$ be the minimum  expected total $\alpha$-discounted cost and the relative cost function for the MDP $\Delta^{(m)}$, respectively. According to \cite{finite-state:Sennott}, we need to prove the following two conditions are satisfied. 
\begin{enumerate}
	\item \textit{There exists  a nonnegative $L$, a nonnegative  finite function $F(.)$ on $\mathbf{S}$ such that $-L \leq h^{(m)}_{\alpha}(\mathbf{s}) \leq F(\mathbf{s})$ for all  $\mathbf{s} \in \mathbf{S}^{(m)}$, where  $m=N+1, N+2, \cdots$ and $0<\alpha<1$}:  We consider a randomized stationary  algorithm $f$ that updates each user (with packet arrival) with equal probability for each slot. Similar to Appendix \ref{appendix:thm:stationary}, let $C_{\mathbf{s}, \mathbf{0}}(f)$ and $C^{(m)}_{\mathbf{s},\mathbf{0}}(f)$ be the expected cost from state $\mathbf{s} \in \mathbf{S}^{(m)}$ to the reference state $\mathbf{0}$ by applying the algorithm $f$ to $\Delta$ and $\Delta^{(m)}$, respectively. Then, $h^{(m)}_{\alpha}(\mathbf{s}) \leq C^{(m)}_{\mathbf{s},\mathbf{0}}(f)$ and  $C_{\mathbf{s},\mathbf{0}}(f) < \infty$ similar to Appendix~\ref{appendix:thm:stationary}.
	In the following, we will show that $C^{(m)}_{\mathbf{s},\mathbf{0}}(f) \leq C_{\mathbf{s},\mathbf{0}}(f)$ and then we can choose the function $F(\mathbf{s})=C_{\mathbf{s},\mathbf{0}}(f)$. 

To that end, we first express $P^{(m)}_{\mathbf{s}, \mathbf{s}'}(d)$ as
\begin{align*}
P^{(m)}_{\mathbf{s}, \mathbf{s}'}(d)=P_{\mathbf{s}, \mathbf{s}'}(d)+\sum_{\mathbf{r}(\mathbf{s}') \in \mathbf{S}-\mathbf{S}^{(m)}} P_{\mathbf{s}, \mathbf{r}(\mathbf{s}')}(d), 
\end{align*} 
for some (or no) \textit{excess probabilities}  \cite{finite-state:Sennott} on some state $\mathbf{r}(\mathbf{s}') \in \mathbf{S}-\mathbf{S}^{(m)}$,  depending on next state $\mathbf{s}'$. 
Since the scheduling algorithm $f$ is independent of the age,  given arrival vector $\boldsymbol{\lambda}$ we have $C_{(\mathbf{i},\boldsymbol{\lambda}),\mathbf{0}}(f) \leq C_{(\mathbf{j},\boldsymbol{\lambda}),\mathbf{0}}(f)$  for age vector $\mathbf{i} \leq \mathbf{j}$. Then, we obtain  
\begin{align}
&\sum_{\mathbf{s}' \in \mathbf{S}^{(m)}} P^{(m)}_{\mathbf{s},\mathbf{s}'}(d)C_{\mathbf{s}',\mathbf{0}}(f) \nonumber\\
=& \sum_{\mathbf{s}' \in \mathbf{S}^{(m)}} \bigl(P_{\mathbf{s},\mathbf{s}'}(d)+\sum_{\mathbf{r}(\mathbf{s}') \in \mathbf{S}-\mathbf{S}^{(m)}} P_{\mathbf{s},\mathbf{r}(\mathbf{s}')}(d)\bigr)C_{\mathbf{s}',\mathbf{0}}(f) \nonumber\\
\leq &\sum_{\mathbf{s}' \in \mathbf{S}^{(m)}} P_{\mathbf{s},\mathbf{s}'}(d)C_{\mathbf{s}',\mathbf{0}}(f)+\sum_{\mathbf{k} \in \mathbf{S}-\mathbf{S}^{(m)}} P_{\mathbf{s},\mathbf{k}}(d)C_{\mathbf{k},\mathbf{0}}(f) \nonumber\\
=&\sum_{\mathbf{s}' \in \mathbf{S}} P_{\mathbf{s},\mathbf{s}'}(d)C_{\mathbf{s}',\mathbf{0}}(f). \label{eq:c-inequ}
\end{align}

Using the above inequality, we then  conclude  $C^{(m)}_{\mathbf{s},\mathbf{0}}(f) \leq  C_{\mathbf{s},\mathbf{0}}(f) $ because
\begin{align*}
C^{(m)}_{\mathbf{s},\mathbf{0}}(f)=&E_{f}[C({\mathbf{s}},d)+\sum_{\mathbf{s}' \in \mathbf{S}^{(m)}} P^{(m)}_{\mathbf{s},\mathbf{s}'}(d)C_{\mathbf{s}',\mathbf{0}}(f)]\\
\leq & E_{f}[C(\mathbf{s},d)+\sum_{\mathbf{s}' \in \mathbf{S}} P_{\mathbf{s},\mathbf{s}'}(d) C_{\mathbf{s}',\mathbf{0}}(f)]\\
=&C_{\mathbf{s},\mathbf{0}}(f).
\end{align*}

On the other hand, we can choose $L=\max_{\mathbf{s} \in \mathbf{S}: x_i \leq N, \forall i} C_{\mathbf{0},\mathbf{s}}(f)$, since $h^{(m)}_{\alpha}(\mathbf{s}) \geq -C^{(m)}_{\mathbf{0}, \mathbf{s}}(f)$ (see Appendix~\ref{appendix:thm:stationary})  and $ -C^{(m)}_{\mathbf{0}, \mathbf{s}}(f)\geq -C_{\mathbf{0}, \mathbf{s}}(f)$	similar to above.

	\item \textit{The value $V^{(m)*}_{\infty}$ is  bounded by $V^*$, i.e., $ V^{(m)*}_{\infty}\leq V^*$}: 
We claim that $V^{(m)}_{\alpha}(\mathbf{s})\leq V_{\alpha}(\mathbf{s})$ for all $m$, and then the condition holds as
\begin{align*}
V^{(m)*}=&\limsup_{\alpha \rightarrow 1} (1-\alpha) V^{(m)}_{\alpha}(\mathbf{s}) \\
\leq& \limsup_{\alpha \rightarrow 1} (1-\alpha)V_{\alpha}(\mathbf{s})=V^*.  
\end{align*}

To verify this claim, we first note that  $V_{\alpha}(\mathbf{s})$ is a non-decreasing function in age (see Proposition~\ref{lemma:monotone}). Then, similar to Eq. (\ref{eq:c-inequ}), we have
\begin{align}
\sum_{\mathbf{s}' \in \mathbf{S}^{(m)}} P^{(m)}_{\mathbf{s},\mathbf{s}'}(d)V_{\alpha}(\mathbf{s}') 
\leq \sum_{\mathbf{s}' \in \mathbf{S}} P_{\mathbf{s},\mathbf{s}'}(d)V_{\alpha}(\mathbf{s}'). \label{eq:v-star-ineq}
\end{align}

We now prove the claim by induction on $n$ in Eq. (\ref{eq:discount-itr}). It is obvious when $n=0$. Suppose that $V^{(m)}_{\alpha,n}(\mathbf{s}) \leq V_{\alpha,n}(\mathbf{s})$, and then
\begin{align*}
&V^{(m)}_{\alpha,n+1}(\mathbf{s})\\
=&\min_{d \in \{0, 1, \cdots N\}} C(\mathbf{s},d)+ \alpha \sum_{\mathbf{s}' \in \mathbf{S}^{(m)}} P^{(m)}_{\mathbf{s},\mathbf{s}'}(d) V^{(m)}_{\alpha,n}(\mathbf{s}')\\
\mathop{\leq}^{(a)} & \min_{d \in \{0, 1, \cdots N\}} C(\mathbf{s},d)+ \alpha \sum_{\mathbf{s}' \in \mathbf{S}^{(m)}} P^{(m)}_{\mathbf{s},\mathbf{s}'}(d) V_{\alpha,n}(\mathbf{s}')\\
\mathop{\leq}^{(b)} & \min_{d \in \{0, 1, \cdots N\}} C(\mathbf{s},d)+ \alpha \sum_{\mathbf{s}' \in \mathbf{S}} P_{\mathbf{s},\mathbf{s}'}(d) V_{\alpha,n}(\mathbf{s}')\\
=&V_{\alpha,n+1}(\mathbf{s}),
\end{align*}	
where (a) results from the induction hypothesis, and (b) is due to Eq.~(\ref{eq:v-star-ineq}).
\end{enumerate}

\begin{remark} \label{remark to truncation}
Here, we want to emphasize that we have chosen $m > N$. If not, the state space $\mathbf{S}$ of the MDP $\Delta$ would have to include more \textit{transient} states such that all ages are no more than $N$, e.g., the age vector of $(N, \cdots, N)$. These additional states are not reachable from state $\mathbf{0}$. Thus, we cannot choose the $L$ as in the proof,  since $C_{\mathbf{0}, \mathbf{s}}(f)$ is infinite  if the age vector in  state $\mathbf{s}$ is $(N, \cdots, N)$.
\end{remark}

\section{Proof of Theorem \ref{theorem:truncation} } \label{appendix:theorem:truncation}
 According to \cite[Theorem 8.6.6]{MDP:Puterman}, the RVIA in Eq. (\ref{eq:rvia}) converges to the optimal solution in finite iterations if the truncated MDP is \textit{unichain}, i.e., the Markov chain corresponding to every deterministic stationary policy consists of a single recurrent class plus a possibly empty set of transient states. Note that for every truncated MDP, there is  only one recurrent class by \cite{gallager}, since the state $(m,\cdots, m, 0, \cdots, 0)$ is reachable (e.g., there is no arrival in the next $m$ slots) from all other states (where remember that $m$ is the boundary of the truncated MDP). Hence, the truncated MDPs are unichain and the theorem follows immediately.

\section{Proof of Theorem \ref{theorem:stationary-omgea}}
\label{appendix:theorem:stationary-omgea} 
 Given  initial state $\mathbf{S}(0)=\mathbf{s}$, we define the expected total  $\alpha$-discounted cost  under  policy $\mu$ by
\begin{align*}
J_{\alpha}(\mathbf{s};\mu)=\limsup_{T \rightarrow \infty} E_{\mu}\left[ \sum_{t=0}^T \alpha^t C(\mathbf{S}(t), A(t))|\mathbf{s}(0)=\mathbf{S} \right],
\end{align*}
Let $J_{\alpha}(\mathbf{s})=\min_{\mu}J_{\alpha}(\mathbf{s};\mu)$ be the minimum   expected total $\alpha$-discounted cost. A policy that minimizes $J_{\alpha}(\mathbf{s};\mu)$ is called \textit{$\Omega_{\alpha}$-optimal policy}.  Again, we check the two conditions in Appendix~\ref{appendix:thm:stationary}. 
\begin{enumerate}
%

	\item   Let $f$ be the deterministic stationary policy of always choosing  action $A(t)=1$ for each slot $t$ if there is an arrival. It is obvious that the resulting DTMC by the policy is irreducible and aperiodic. To calculate the average cost, we note that age $X(t)$ by the policy $f$ is also a DTMC in Fig. \ref{fig:age-dtmc}. The steady-state distribution $\boldsymbol{\xi}=(\xi_1, \xi_2, \cdots, )$ of the DTMC is 
\begin{align*}
	\xi_i=p(1-p)^{i-1}\,\,\,\,\text{for all $i=1,2, \cdots$}.
\end{align*}	
Hence, the average age is 
\begin{align*}
\sum_{i=1}^{\infty} i \xi_i= \sum_{i=1}^{\infty} i p(1-p)^{i-1} = \frac{1}{p}.
\end{align*}
On the other hand, the average updating cost is $c \cdot p$ as the arrival probability is $p$. Hence,  the  average cost under the policy $f$ is   the average age (i.e., $1/p$) plus the average updating cost (i.e., $c\cdot p$), which is finite. 	

\begin{figure}[!t]
\centering
\includegraphics[width=.35\textwidth]{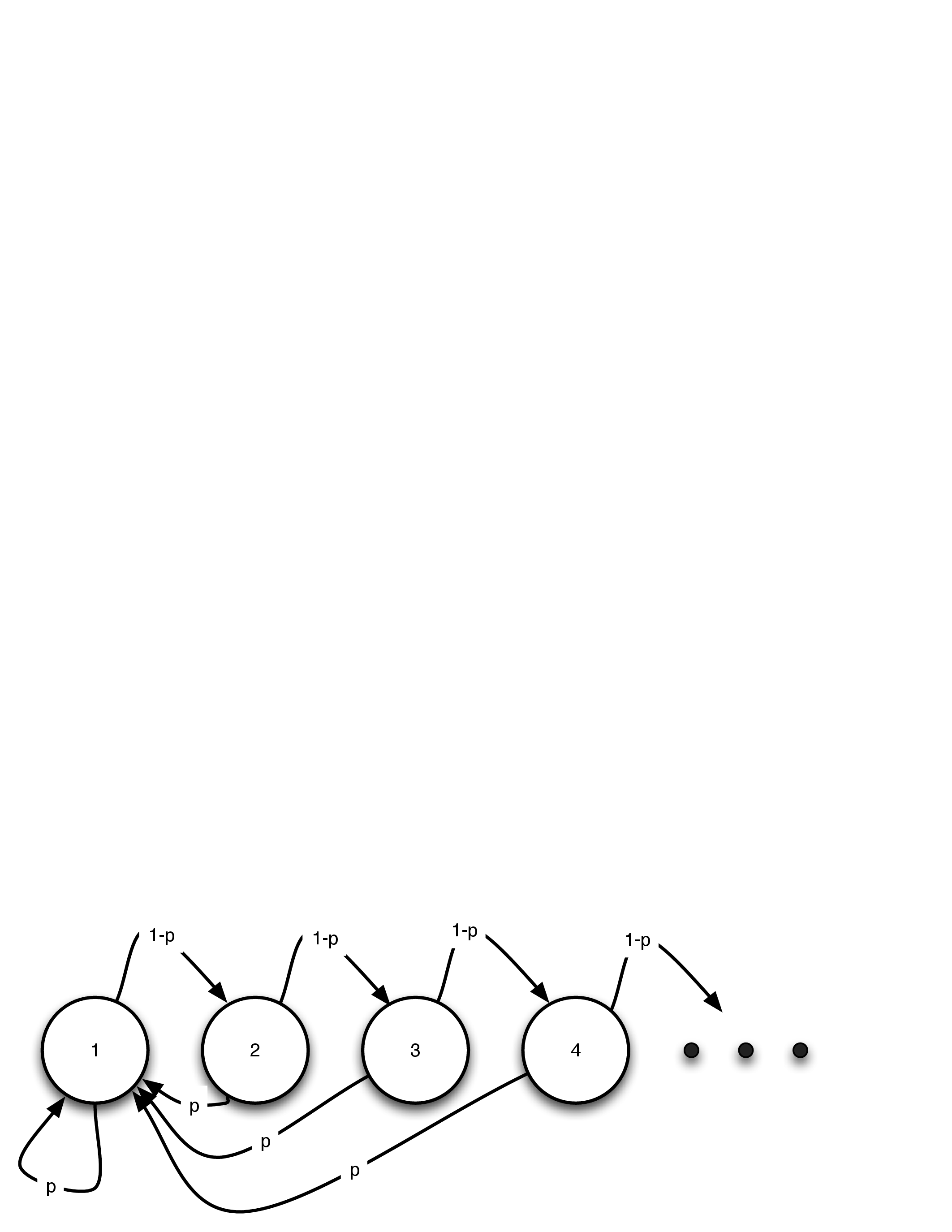}
\caption{The age $X(t)$ under the policy $f$ forms a DTMC.}
\label{fig:age-dtmc}
\end{figure}

\item Similar to  Proposition~\ref{lemma:monotone}, we can show that $J_{\alpha}(x,\lambda)$ is a non-decreasing function in  age $x$ for a given arrival indicator $\lambda$; moreover, $J_{\alpha}(x,\lambda)$ is a non-increasing function in $\lambda$ for a given age $x$. Thus, we can choose $L=0$. 
\end{enumerate}
By verifying the two conditions, the theorem immediately follows from \cite{stationary-policy:Sennott}.

\section{Proof of Lemma~\ref{lemma:optimal-threshold}}
\label{appendix:lemma:optimal-threshold}
Since $I(x,1)$ is the updating cost $c$ to make both actions for state $(x,1)$ equally desirable and we break a tie in favor of idling,  the optimal threshold is $x+1$ if the cost is $c=I(x,1)$, for all $x$. We  claim that  the optimal threshold monotonically increases with cost $c$, and then the theorem follows. 

To verify the claim,  we can focus on the discounted cost case according to the proof of Theorem \ref{theorem:optimal-switch}.  Suppose that an $\Omega_{\alpha}$-optimal action, associated with a cost $c_1$, for state $(x,1)$ is to idle, i.e., 
\begin{align*}
x+1+\alpha E[J_{\alpha}(x+1,\lambda')] \leq 1+c_1+\alpha E[J_{\alpha}(1,\lambda')] .
\end{align*}
Then, an $\Omega_{\alpha}$-optimal action, associated with a cost $c_2 \geq c_1$, for state $(x,1)$ is to idle as well since 
\begin{align*}
x+1+\alpha E[J_{\alpha}(x+1,\lambda')] \leq &1+c_1+\alpha E[J_{\alpha}(1,\lambda')] \\
\leq & 1+c_2+\alpha E[J_{\alpha}(1,\lambda')].
\end{align*}
Then, the monotonicity is established.

\section{Proof of Lemma \ref{lemma:index-optimal}}
\label{appendix:lemma:index-optimal}
Similar to the proof of Theorem \ref{theorem:optimal-switch}, we can focus on the discounted cost case. Without loss of generality, we assume that  age $x_1  \geq \max(x_2, \cdots x_N)$.

Let $\mathbf{x}_{ij}=(0, \cdots, x_j, \cdots, 0)$ be the zero vector except for the $i$-the entry being replaced by $x_j$. By the symmetry of the users, swap of the initial ages of any two users results in the same expected total  $\alpha
$-discounted cost, i.e., 
\begin{align*}
E[V_{\alpha}(x_1,\mathbf{x}_{-1}, \boldsymbol{\lambda})]=E[V_{\alpha}(x_j, \mathbf{x}_{-1}-\mathbf{x}_j+\mathbf{x}_{j1}, \boldsymbol{\lambda})],
\end{align*}
for all $j \neq 1$. Similar to the proof of Theorem \ref{theorem:optimal-switch}, here we focus on the case when $\lambda_1=1$ and $\lambda_j=1$. The result follows from the non-decreasing function of $V_{\alpha}(x_1,\mathbf{x}_{-1},\boldsymbol{\lambda})$ and $x_1 \geq x_j$ for all $j \neq 1$:
\begin{align*}
&\nu_{\alpha}(x_1, \mathbf{x}_{-1},\boldsymbol{\lambda};1)-\nu_{\alpha}(x_1, \mathbf{x}_{-1},\boldsymbol{\lambda};j)\\
=&x_j-x_1 +\alpha E[V_{\alpha}(1, \mathbf{x}_{-1}+\mathbf{1}, \boldsymbol{\lambda}')\\
& -V_{\alpha}(x_1+1, \mathbf{x}_{-1}+\mathbf{1}-\mathbf{x}_j, \boldsymbol{\lambda}')]\\
=&x_j-x_1 +\alpha E[V_{\alpha}(x_j+1, \mathbf{x}_{-1}+\mathbf{1}-\mathbf{x}_j, \boldsymbol{\lambda}')\\
&-V_{\alpha}(x_1+1, \mathbf{x}_{-1}+\mathbf{1}-\mathbf{x}_j, \boldsymbol{\lambda}')] \leq 0.\\
\end{align*}

\end{document}